\documentclass[hidelinks,12pt]{amsart}
\usepackage{amstext, amsmath, amssymb, amsfonts, amsthm, mathtools, latexsym}
\usepackage{faktor}
\usepackage{graphicx}
\usepackage{hyperref}
\usepackage{enumitem}

\usepackage[left=3cm, right=3cm, bottom=4cm]{geometry} 
\setlength{\parskip}{0,2cm}
\setlist[itemize]{noitemsep, topsep=0pt}
\raggedbottom
\vfuzz2.5pt 
\hfuzz2.5pt 

\begin{document}

\newtheoremstyle{basico}
  {0,2cm}
  {0}
  {\itshape}
  {0,5cm}
  {\bfseries}
  {}
  {0,2cm}
  {\thmname{#1}\thmnumber{ #2}:\thmnote{ #3}}
\theoremstyle{basico}  

\newtheorem{teoprin}{Theorem}  
\newtheorem{coroprin}{Corollary}
\newtheorem{teo}{Theorem}[section]
\newtheorem{lema}[teo]{Lemma}
\newtheorem{prop}[teo]{Proposition}
\newtheorem{rem}[teo]{Remark}
\newtheorem*{conj}{Shub's conjecture}
\newtheorem{afirm}{Afirmation}

\newtheoremstyle{ejemplos}
  {0,2cm}
  {0}
  {}
  {0,5cm}
  {\bfseries}
  {}
  {0,2cm}
  {\thmname{#1}\thmnumber{ #2}:\thmnote{ #3}}
\theoremstyle{ejemplos}
\newtheorem{ex}{Example}

\newcommand{\B}{\mathbb{B}}
\newcommand{\N}{\mathbb{N}}
\newcommand{\Z}{\mathbb{Z}}
\newcommand{\Q}{\mathbb{Q}}
\newcommand{\D}{\mathbb{D}}
\newcommand{\R}{\mathbb{R}}
\newcommand{\C}{\mathbb{C}}
\newcommand{\E}{\mathbb{E}}
\newcommand{\T}{\mathbb{T}}
\newcommand{\A}{\mathbb{A}}
\newcommand{\pol}{\mathbb{P}}

\newcommand{\Or}{\mathcal{O}}
\newcommand{\OR}{\mathbb{O}}
\newcommand{\OOR}{\overline{\mathbb{O}}}

\newcommand{\Cy}{\mathcal{C}}
\newcommand{\Ha}{\mathcal{H}}

\title{ Orders of Growth and Generalized Entropy}

\author{Javier Correa}
\address{Universidade Federal de Minas Gerais}
\email {jcorrea@ufmg.mat.br}

\author{Enrique R. Pujals} 
\address{The Graduate Center - CUNY} 
\email {epujals@gc.cuny.edu}

\thanks{The first author has been supported by CAPES, and would like to thank UFRJ since this work started at his posdoctoral position in said university.}

\begin{abstract} We construct the complete set of orders of growth and we define on it the generalized entropy of a dynamical systems. With this object we provide a framework where we can study the separation of orbits of a map beyond the scope of exponential growth. We are going to show that this construction is particularly useful to study families of dynamical systems with vanishing entropy. Moreover, we are going to see that the space of orders of growth in which orbits are separated is wilder than expected. This is going to be achieved with different types of examples.
\end{abstract}

\maketitle

\section{Introduction.}


One of the goals in dynamical systems is to classify families of continuous maps according to their dynamical properties. One way to do this, is through a topological invariant, defined in an ordered set, which somehow measures the dynamical complexity of the systems. The notion of topological entropy, achieves this. It measures the exponential growth rate at which orbits of a system are separated. It was introduced by Adler, Konheim and McAndrew in \cite{AdKoAn65} and later on Dinaburg in \cite{Di71} and Bowen in \cite{Bo71} gave new equivalent definitions. 

The objective of this work is to provide a framework where we can generalize the classical notion of entropy, allowing a study beyond the scope of exponential growth. We are going to show that this construction is particularly useful to study families of dynamical systems with vanishing entropy. Moreover, we are going to see that the space of orders of growth in which orbits are separated is wilder than expected. And this, is going to be achieved with the different types of examples we are going to study. 

We shall begin this article with the construction of what we call the \emph{complete set of orders of growth}.

We start by considering the space of non-decreasing sequences in $[0,\infty)$, 
\[\Or=\{a:\N\to [0,\infty): a(n)\leq a(n+1)\ \forall n\in \N\}.\]
 In this space, we define an equivalence relation as follows: given $a_1,a_2\in \Or$ we say that $a_1\approx a_2$ if there exists $c,C\in(0,+\infty)$ such that $c a_1(n)\leq a_2(n)\leq C a_1(n)$ $\forall n\in \N$. The previous, is commonly written as $a_1\in \Theta (a_2)$ and the meaning for two sequences to be related, is that both of them have the same order of growth. Because of this, when we consider the quotient space $\OR = \faktor{\Or}{_\approx}$ we call it the space of orders of growth. If $a$ belongs to $\Or$ we are going to note $[a(n)]$ the class associated to $a$ which is an element of $\OR$. If we have a sequence defined by its formula  (for example $n^2$), we are going to represent the order of growth associated to it with the formula between brackets ($[n^2]\in \OR$). 

Since $\OR$ is the space of orders of growth, there is a clear notion of a order of growth being faster than another. This concept, defines a partial order in $\OR$ which we formalize through the following construction: given $[a_1(n)],[a_2(n)] \in \OR$ we say that $[a_1(n)] \leq [a_2(n)]$ if there exists $C>0$ such that $a_1(n) \leq C a_2(n)$. This partial order is well defined because it does not depend on the choices of $a_1$ and $a_2$. 

We have now $(\OR,\leq)$  which is a partial order. We recall that the properties that define a partial order are the reflexivity ($o \leq o, \forall o\in \OR$), antisymmetry (if $o_1\leq o_2$ and $o_2\leq o_1$, then $o_1 = o_2$) and the transitivity (if $o_1 \leq o_2$ and $o_2\leq o_3$, then $o_1 \leq o_3$). For our purposes, we would like to be able to take ``limits" in this space and therefore, we need to complete it. We say that a set $L$ with a partial order is a complete lattice if every subset $A\subset L$ has both an infimum and a supremum. We consider now $\OOR$ the Dedekind - MacNeille completion of $\OR$. This is the smallest complete lattice which contains $\OR$. In particular, it is uniquely defined and from now on, we will consider that $\OR\subset \OOR$. We will also call $\OOR$ the complete set of orders of growth. Another way to define $\OOR$ is to consider in $\OR$ the order topology and then consider the compactification of $\OR$ respecting the partial order.

Since $\OOR$ is not a complete order, just a partial order, we are not going to represent the elements of $\OOR$ in a line. We are going to represent them in the plane. Given $o,u\in \OOR$, if we design $o$ to the right of $u$, then $o$ and $u$ may or may not be comparable but if they are, $u<o$. However, if we design them on the same horizontal line and $o$ is to the right of $u$, then $u<o$ holds.


We want now to define entropy of a dynamical systems in the complete space of orders of growth. We will consider that the reader is familiar with the notion of topological entropy. Check for example \cite{ViOl16}, \cite{Wa82} or \cite{Ma87} for more details. Let us briefly recall the concepts involved. 

Given $M$ a compact metric space and $f:M\to M$ a continuous map we define the dynamical ball $B(x,n,\epsilon)=\{y\in M: d_n(x,y)\leq \epsilon\}$ where $d_n(x,y)=\sup\{d(f^i(x),f^i(y)):0\leq i \leq n\}$. A set $E\subset M$ is  $(n,\epsilon)$-generator if 
$M = \bigcup_{x\in E} B(x,n,\epsilon)$. By compactness of $M$ there always exists a finite $(n,\epsilon)$-generator set. We define then $g(f,\epsilon,n)$ as the smallest possible cardinality of a finite $(n,\epsilon)$-generator. If we fix $\epsilon>0$ , then we observe that $g(f,\epsilon,n)$ is an increasing sequence of natural numbers. And for a fixed $n$ if $\epsilon_1>\epsilon_2$, then $g(f,\epsilon_1,n)\geq g(f,\epsilon_2,n)$. 

We will set our notation as follows, the sequence $g_{f,\epsilon}\in \Or$ is defined by $g_{f,\epsilon}(n) =g(f,\epsilon,n)$. By the previous, we deduce that $[g_{f,\epsilon_1}(n)]> [g_{f,\epsilon_2}(n)]$ if $\epsilon_1<\epsilon_2$. If we consider $G_f=\{[g_{f,\epsilon}(n)]\in \OR:\epsilon > 0\}$, then we define the generalized topological entropy of $f$ as 
\[o(f)= ``\lim_{\epsilon \to 0}"[g_{f,\epsilon}(n)] =\sup(G_f)\in \OOR.\]

The first thing we want to state about generalized entropy is that its a topological invariant. 

\begin{teoprin}\label{Inv}
Let $M$ and $N$ be two compact metric spaces and $f:M\to M$, $g:N\to N$ two continuous map. Suppose there exists $h:M\to N$ a homeomorphism such that $h\circ f = g \circ h$. Then,  $o(f)=o(g)$.
\end{teoprin}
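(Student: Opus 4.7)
The plan is to mimic the classical proof that topological entropy is a conjugacy invariant, and then translate the standard cardinality inequalities into the partial order of $\OOR$. By symmetry (swapping the roles of $f,g$ and using $h^{-1}$), it suffices to prove $o(g)\leq o(f)$; this will follow if I show that every element of $G_g$ is bounded above in $\OR$ by some element of $G_f$, for then $o(f)=\sup G_f$ is an upper bound of $G_g$ and hence dominates $\sup G_g=o(g)$.

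First, I would fix $\epsilon>0$ and use that $N$ is compact, so $h^{-1}\colon N\to M$ is uniformly continuous. Choose $\delta>0$ such that $d_M(x,y)\leq\delta$ implies $d_N(h(x),h(y))\leq\epsilon$. The key step is to check that if $E\subset M$ is an $(n,\delta)$-generator set for $f$, then $h(E)\subset N$ is an $(n,\epsilon)$-generator set for $g$. Indeed, given $y\in N$, the point $h^{-1}(y)\in M$ lies in $B_f(x,n,\delta)$ for some $x\in E$, which means $d_M(f^i(x),f^i(h^{-1}(y)))\leq \delta$ for every $0\leq i\leq n$. Applying the conjugacy $h\circ f^i=g^i\circ h$ and the uniform continuity inequality yields $d_N(g^i(h(x)),g^i(y))\leq\epsilon$ for every $0\leq i\leq n$, so $y\in B_g(h(x),n,\epsilon)$. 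Since $h$ is injective, $|h(E)|=|E|$, and therefore
\[g(g,\epsilon,n)\leq |h(E)|=|E|,\]
and taking $E$ to realize the minimal cardinality gives $g(g,\epsilon,n)\leq g(f,\delta,n)$ for every $n\in \N$.

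Translating to orders of growth, this pointwise inequality (with constant $C=1$) says $[g_{g,\epsilon}(n)]\leq [g_{f,\delta}(n)]$ in $\OR$. Since $[g_{f,\delta}(n)]\in G_f$, we have $[g_{f,\delta}(n)]\leq \sup G_f=o(f)$ in $\OOR$, and by transitivity $[g_{g,\epsilon}(n)]\leq o(f)$. As $\epsilon>0$ was arbitrary, $o(f)$ is an upper bound for $G_g$, so $o(g)=\sup G_g\leq o(f)$. Running the exact same argument with the conjugacy $h^{-1}\circ g=f\circ h^{-1}$ yields the reverse inequality, and antisymmetry of the partial order on $\OOR$ gives $o(f)=o(g)$.

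I do not expect any serious obstacle: the only thing to be careful with is that the classical definitions involve limsups and exponential rates, while here we need to keep pointwise inequalities at every $n$ to invoke the definition of $\leq$ in $\OR$, and we must pass the supremum through the completion $\OOR$. Both are immediate from the constructions given in the introduction.
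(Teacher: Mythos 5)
Your proof is correct and follows essentially the same route as the paper's: fix $\epsilon$, take $\delta$ from uniform continuity, transfer the $(n,\cdot)$-data through the conjugacy to get a cardinality inequality valid for every $n$, pass to orders of growth, and take suprema; the only cosmetic difference is that you push forward generating sets by $h$ (working directly with the definition of $o$), whereas the paper pulls back separated sets by $h^{-1}$ and then relies on its Proposition that the generator and separated-set formulations of generalized entropy coincide. One trivial slip: the inequality $d_M(x,y)\leq\delta\Rightarrow d_N(h(x),h(y))\leq\epsilon$ that you actually use is the uniform continuity of $h$ (coming from compactness of $M$), not of $h^{-1}$; this changes nothing in the argument.
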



Recalling that the topological entropy of a map is defined as
\[h(f) = \lim_{\epsilon\to 0}\limsup_n \frac{1}{n} log(g_{f,\epsilon}(n)),\]
the natural question now is how the generalized topological entropy  is related to topological entropy. The answer to this question is very simple, the classical notion of topological entropy is the projection of the generalized entropy into the family of exponential orders of growth. 

The exponential orders of growth are the classes of the sequences $\{exp(tn)\}_{n\in\N}$ where $t$ is a number between $0$ and $\infty$. Then, the family of exponential orders of growth is the set 
\[ \E=\{[exp(tn)]: t\in (0,\infty)\}\subset \OR.\]
 Although, it is not necessary for now, we take the opportunity to remark that the elements $\inf(\E)$ and  $\sup(\E)$ belong to $\OOR$ and are both abstract orders of growth which are not realizable by any sequence. 

Once we have established the family of exponential growths $\E$, we say how we compare an element $o\in \OOR$ with $\E$. Given $o\in \OOR$ we consider the interval $I_{\E}(o)= \{t\in (0,\infty):o \leq [exp(tn)]\}\subset \R$. We would like to observe that the order of growth $o$ might not be comparable to any element of $\E$ and therefore the set $I_{\E}(o)$ might be the empty set. In any case, we define the projection  $\pi_{\E}:\OOR \to [0,\infty]$ by the following rule: 
\begin{itemize}
\item If $I_{\E}(o)\neq \emptyset$, then $\pi_{\E}(o)=\inf (I_{\E}(o))$.
\item If $I_{\E}(o)=\emptyset$, then $\pi_{\E}(o)=\infty$.
\end{itemize} 

Now that we defined how to project a order of growth into the family of exponential orders of growth, let us enunciate our second theorem.

\begin{teoprin}\label{Rel}
Let $M$ be a compact metric space and $f:M\to M$ a  continuous map. Then,  $\pi_{\E}(o(f))=h(f)$. And, $o(f)\leq sup(\E)$. 
\end{teoprin}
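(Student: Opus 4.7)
The plan is to establish $o(f)\leq \sup(\E)$ via a Bowen--Dinaburg covering argument and then deduce $\pi_{\E}(o(f))=h(f)$ by translating the order-theoretic definitions into statements about concrete sequences.

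For the exponential upper bound, I fix $\epsilon>0$ and use compactness to cover $M$ by open balls $U_1,\dots,U_N$ of radius $\epsilon$. For each $\mathbf{i}=(i_0,\dots,i_{n-1})\in \{1,\dots,N\}^n$ such that $V_{\mathbf{i}}:=\bigcap_{k=0}^{n-1}f^{-k}(U_{i_k})$ is nonempty, I pick a point $y_{\mathbf{i}}\in V_{\mathbf{i}}$; any $x\in M$ lies in at least one $V_{\mathbf{i}}$, so $d(f^k(x),f^k(y_{\mathbf{i}}))\leq 2\epsilon$ for $0\leq k<n$. Hence $g_{f,2\epsilon}(n)\leq N^n$, so $[g_{f,2\epsilon}(n)]\leq [\exp(\log(N)\,n)]\in \E$; since $\epsilon>0$ was arbitrary, $o(f)\leq \sup(\E)$ follows.

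For the equality $\pi_{\E}(o(f))=h(f)$, the key translation is that, because $[\exp(tn)]\in \OR$, the universal property of the supremum gives
\[
o(f)\leq [\exp(tn)] \iff [g_{f,\epsilon}(n)]\leq [\exp(tn)] \text{ for every }\epsilon>0,
\]
which in turn is equivalent (by the definition of the order in $\OR$) to the existence of constants $C_\epsilon$ with $g_{f,\epsilon}(n)\leq C_\epsilon\exp(tn)$ for all $n$. This last condition is implied by $h_\epsilon:=\limsup_n \frac{1}{n}\log g_{f,\epsilon}(n)<t$ and implies $h_\epsilon\leq t$. Combining these facts with $h(f)=\sup_\epsilon h_\epsilon$: every $t>h(f)$ belongs to $I_{\E}(o(f))$, so $\pi_{\E}(o(f))\leq h(f)$; conversely, any $t<h(f)$ admits some $\epsilon$ with $h_\epsilon>t$, forcing $g_{f,\epsilon}(n)/\exp(tn)$ to be unbounded and hence $t\notin I_{\E}(o(f))$, which gives $\pi_{\E}(o(f))\geq h(f)$. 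In the case $h(f)=\infty$, the same argument produces $I_{\E}(o(f))=\emptyset$, so $\pi_{\E}(o(f))=\infty$ by convention.

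I expect the main delicate point to be the careful handling of the strict-versus-weak inequality between $h_\epsilon$ and $t$, since an element of $\OR$ comparing with $[\exp(tn)]$ responds to uniform constant bounds rather than to the $\limsup$ directly. Everything else reduces to combining the standard covering argument with the universal property of the supremum in a Dedekind--MacNeille completion.
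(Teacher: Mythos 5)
Your proposal is correct and follows essentially the same route as the paper: the exponential bound comes from the same Bowen--Dinaburg itinerary/covering count (the paper phrases it with $(n,\epsilon)$-separated sets and an injective itinerary map, you with $(n,2\epsilon)$-generating sets, which are interchangeable by Proposition \ref{SepEquGen}), and the equality $\pi_{\E}(o(f))=h(f)$ is obtained in both cases by translating $[g_{f,\epsilon}(n)]\leq[\exp(tn)]$ into the $\limsup$ characterization of the order and comparing with $h(f)=\sup_\epsilon h_\epsilon$. Your explicit two-sided inclusion $(h(f),\infty)\subseteq I_{\E}(o(f))\subseteq[h(f),\infty)$ is just a cleaner packaging of the paper's two implications, and your handling of the strict-versus-weak inequality is exactly the delicate point the paper also navigates.
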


\begin{figure}[!htp]
\centering
\includegraphics{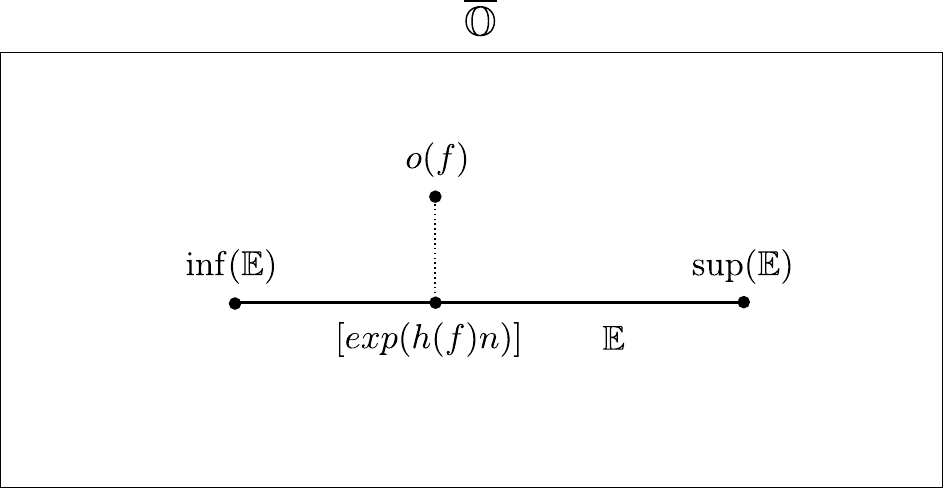}
\caption{Theorem \ref{Rel}}
\end{figure}

We would like to point out that we are projecting into the closure of the set of indexes that define $\E$ and not into $\E$ itself. The reason for this is that $\OOR$ is so big that $\E$ is not a closed set and it is in fact discrete.

Let us show some examples: 

\begin{ex} 
If $\Sigma_k = \{1,\dots,k\}^\N$ and $\sigma:\Sigma_k \to \Sigma_k$ is the shift, then we know that 
\[ g_{f,\epsilon}(n) = k^{(n+\left\lfloor 1/\epsilon\right\rfloor)} = exp(log(k)(n + \left\lfloor 1/\epsilon \right\rfloor)) = C(\epsilon). exp(log(k)n),\]
where $C(\epsilon)$ is a constant which depends only on epsilon. When we consider the order of growth associated to such sequence, we can ignore $C(\epsilon)$ and then $[g_{f,\epsilon}(n)] = [exp(log(k)n)]$ for all $\epsilon$. This implies that $o(\sigma)= [exp(log(k)n)]$. 
\end{ex}

The next example, shows a dynamical system such that its generalized entropy is an abstract order of growth (an element of $\OOR \setminus \OR$).

\begin{ex} 
Consider $\Sigma = [0,1]^\N$ and $\sigma:\Sigma\to \Sigma$ is the shift. In this case, it is not hard to see that 
\[g_{\sigma,\epsilon}(n) = (2/\epsilon)^{(n+\left\lfloor 1/\epsilon\right\rfloor)} = exp(log(2/\epsilon) (n + \left\lfloor 1/\epsilon\right\rfloor)) = C(\epsilon). exp(log(2/\epsilon)n). \]
 And from this, we deduce that $[g_{\sigma,\epsilon}(n)] =[exp(log(2/\epsilon)n)]$ and since $\{log(2/\epsilon): 0 < \epsilon < 1\} = (log(2),\infty)$, we conclude that $o(\sigma) = \sup(\E)$.
\end{ex}

We would like to recall that it is also possible to construct examples with $o(f)=\sup(\E)$ in the context of manifolds, with $C^0$ maps. 

It is interesting for us to know if there are other examples such that its generalized topological entropy is an abstract order of growth. We know that expansivity in the compact case is an obstruction for this phenomena (this is proved in appendix B). 


Since the space of maps such that $0<h(f)<\infty$ is relatively well understood, we ask what can we say in our context with maps such that $h(f)=\infty$ or $h(f)=0$. The first category has been answered in Theorem \ref{Rel}. The inequality $o(f) \leq \sup(\E)$, implies that $h(f)=\infty$ if and only if $o(f)=\sup(\E)$. In particular, from the standard perspective of separation of orbits, maps with infinite entropy can not be told apart.

On the other hand, much can be said when $h(f)=0$. For now, we are going to restrict ourselves to understand those systems that have the most simple dynamics. Let us introduce an important element of $\OOR$.  Since $\OOR$ is a complete lattice, it has a minimum. Nonetheless, the minimum of $\OOR$ already belongs to $\OR$ and it is the equivalence class of the constant sequence. To simplify the notation we are going to denote such element by $0$.  

It interest us to know, which are the maps such that $o(f)=0$. Our following theorem answers this question and shows a simple condition to obtain at least linear growth.

\begin{teoprin}\label{Ent0}
Let $M$ be a compact metric space and $f:M\to M$ a continuous map. Then, $o(f)=0$ if and only if $f$ is Lyapunov stable. In addition, if $f$ is a homeomorphism, and there exists $x\in M$ such that $x\notin \alpha(x)$, then $o(f) \geq [n]$. 
\end{teoprin}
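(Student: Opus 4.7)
The plan is to introduce the orbital metric $d^*(x,y):=\sup_{k\ge 0}d(f^k(x),f^k(y))$ on $M$ (bounded since $M$ is) and to note that $f$ is Lyapunov stable precisely when the identity map $(M,d)\to(M,d^*)$ is continuous -- equivalently, by compactness of $(M,d)$, uniformly continuous. The implication $f$ Lyapunov stable $\Rightarrow o(f)=0$ is then immediate: given $\epsilon>0$, pick $\delta>0$ with $d(z,y)<\delta$ forcing $d^*(z,y)\le\epsilon$; any finite $\delta$-net for $(M,d)$ is then an $(n,\epsilon)$-generator for \emph{every} $n$, so each $g_{f,\epsilon}$ is bounded and $o(f)=0$.

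For the converse, assume $o(f)=0$, i.e.\ for each $\epsilon>0$ there is $K=K(\epsilon)$ with $g_{f,\epsilon}(n)\le K$ for all $n$. The central step is to show $(M,d^*)$ is compact. \emph{Total boundedness:} pick $(n,\epsilon)$-generators $E_n=\{x_1^n,\dots,x_K^n\}$ and, by compactness of $M^K$, extract a subsequence $n_j\to\infty$ along which each $x_i^{n_j}\to x_i\in M$; set $E=\{x_1,\dots,x_K\}$. Since $E_{n_j}$ is also an $(N,\epsilon)$-generator whenever $n_j\ge N$, a first pigeonhole in $j$ together with continuity of the $f^k$ produces, for every $y\in M$ and every $N$, an index $i(y,N)\in\{1,\dots,K\}$ with $d(f^k(x_{i(y,N)}),f^k(y))\le\epsilon$ for $0\le k\le N$; a second pigeonhole in $N$ then yields a single $i^*$ valid for arbitrarily large $N$, whence $d^*(x_{i^*},y)\le\epsilon$. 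Thus $E$ is an $\epsilon$-net for $d^*$. \emph{Completeness:} a $d^*$-Cauchy sequence $(y_n)$ is $d$-Cauchy, hence $d$-converges to some $y$; letting $m\to\infty$ in the inequality $d(f^k(y_n),f^k(y_m))<\epsilon$ and using continuity of $f^k$ gives $d^*(y_n,y)\le\epsilon$. So $(M,d^*)$ is compact, and the identity map $(M,d^*)\to(M,d)$ is a continuous bijection between compact Hausdorff spaces (since $d\le d^*$), hence a homeomorphism; its inverse $(M,d)\to(M,d^*)$ is then continuous and, by compactness, uniformly so, which is Lyapunov stability.

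For the addendum, assume $f$ is a homeomorphism and $x\notin\alpha(x)$. Choose $r>0$ and $N_0\in\N$ with $d(f^{-m}(x),x)\ge 3r$ for all $m\ge N_0$; note this forces $x$ to be non-periodic, so the points $f^{-m}(x)$ are pairwise distinct. For $n\ge N_0$ set $y_j:=f^{-jN_0}(x)$ for $j=0,1,\dots,\lfloor n/N_0\rfloor$; for $0\le j<j'$, evaluating the dynamical distance at $m=jN_0\le n$ gives
\[
d(f^m(y_j),f^m(y_{j'}))=d(x,f^{-(j'-j)N_0}(x))\ge 3r,
\]
so no single $d_n$-ball of radius $r$ can contain two of the $y_j$ (their mutual $d_n$-distance would be at most $2r<3r$). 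Hence any $(n,r)$-generator has at least $\lfloor n/N_0\rfloor+1$ elements, so $g_{f,r}(n)\ge\lfloor n/N_0\rfloor+1$ and $[n]\le[g_{f,r}(n)]\le o(f)$. The hardest step of the argument is the total boundedness of $(M,d^*)$ above: one must extract from the collection of $(n,\epsilon)$-generators a single finite set that $\epsilon$-approximates every point simultaneously in $n$; once that is secured, the rest reduces to standard compact-Hausdorff bookkeeping and an elementary counting for the linear lower bound.
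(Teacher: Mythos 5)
Your proof is correct, and for the implication $o(f)=0\Rightarrow$ Lyapunov stable it takes a genuinely different route from the paper. The paper stays inside the separated-set machinery: since $s_{f,\epsilon}$ is bounded and non-decreasing it is eventually constant, so a single maximal $(n_0,\epsilon)$-separated set $E$ remains maximal (hence an $(n,\epsilon)$-generator) for all $n\geq n_0$; every point then lies within $d_\infty$-distance $\epsilon$ of some point of $E$, and a contradiction argument with a $4\epsilon$ triangle inequality finishes. You instead package the whole statement into the orbital metric $d^*=d_\infty$: your double-pigeonhole extraction of a single finite $\epsilon$-net from the family of $(n,\epsilon)$-generators plays exactly the role of the paper's stabilized maximal separated set, but you then conclude by showing $(M,d^*)$ is compact and invoking that a continuous bijection from a compact space to a Hausdorff space is a homeomorphism, so $\mathrm{id}:(M,d)\to(M,d^*)$ is (uniformly) continuous, which is precisely equicontinuity. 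This buys a cleaner conceptual statement ($o(f)=0$ iff $d$ and $d^*$ are uniformly equivalent) at the cost of the completeness/total-boundedness bookkeeping; the paper's argument is more hands-on but shorter. The forward implication is identical to the paper's, and your addendum is the same counting argument (the paper notes that $x\notin\alpha(x)$ already gives $d(x,f^{-m}(x))\geq\epsilon$ for \emph{all} $m\geq 1$, so it can use the full backward segment $\{f^{-i}(x):0\leq i<n\}$ rather than your subsampled orbit along multiples of $N_0$; both yield $[n]$).
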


The first part of Theorem \ref{Ent0}, has already been proved by Blanchard, Host and Massin in \cite{BlHoMa00} where the property $o(f)=0$ is called bounded complexity and Lyapunov stable maps are called equicontinuous. However, in this article we are also going to offer an alternate proof.   

From the second part, we conclude the following corollary

\begin{coroprin}
Let $f:M\to M$ be a continuous map on a metric compact space. If $o(f)<[n]$, then every point is recurrent and therefore $Rec(f)=\Omega(f)= M$. In particular, when $M$ is connected, $f$ has a point $x$ whose $\omega$ limit is not a periodic orbit. 
\end{coroprin}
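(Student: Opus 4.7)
The corollary splits into two claims: first, that $o(f) < [n]$ implies every point is recurrent, so that $Rec(f) = \Omega(f) = M$; second, that when $M$ is connected, some point has $\omega$-limit distinct from a periodic orbit. For the first claim, I would argue by contraposition, following the strategy of the second part of Theorem \ref{Ent0} but replacing the $\alpha$-limit by the $\omega$-limit. If $x \in M$ satisfies $x \notin \omega(x)$, then there exist $\epsilon_0 > 0$ and $N \in \N$ with $d(x, f^k(x)) \geq 2\epsilon_0$ for all $k \geq N$. When $f$ is a homeomorphism, the $n+1$ points $x_i = f^{-iN}(x)$ for $i = 0, \dots, n$ form an $(n, \epsilon_0)$-separated family: for $i < j$, applying $f^{jN}$ sends $x_j$ to $x$ and $x_i$ to $f^{(j-i)N}(x)$, which lies at distance at least $2\epsilon_0$ from $x$. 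This forces $g_{f,\epsilon_0}(n) \geq n$, whence $o(f) \geq [n]$, contradicting the hypothesis.

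For a general continuous $f$, I would pass to the natural extension $(\widehat{M}, \widehat{f})$: a non-recurrent point of $f$ lifts to a non-recurrent point of $\widehat{f}$, and since $o(\widehat{f}) = o(f)$ (the analogue for generalized entropy of the classical invariance of topological entropy under natural extensions), applying the homeomorphism argument to $\widehat{f}$ completes the contradiction.

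For the second claim, assume $M$ connected and suppose toward contradiction that $\omega(x)$ is a periodic orbit for every $x \in M$. Combined with $x \in \omega(x)$ from the first claim, this makes every point of $M$ periodic, so $M = \bigcup_{p \geq 1} \{x : f^p(x) = x\}$ is a countable union of closed sets. Baire produces some $p$ for which the fixed-point set of $f^p$ has nonempty interior, and connectedness of $M$ together with continuity of $f$ propagates this to $M = \{x : f^p(x) = x\}$ for that single $p$. One then extracts a contradiction from this strong periodicity, which is incompatible with the connected structure of $M$ once trivial cases are excluded.

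The main obstacle is the passage from the homeomorphism to the general continuous case in the first claim: it hinges on the invariance of $o$ under the natural extension, which mirrors the corresponding well-known result for classical topological entropy but must be verified in the refined setting of orders of growth. Granted that invariance, the remaining steps reduce to the separated-set construction of Theorem \ref{Ent0} and a Baire-category argument on $M$.
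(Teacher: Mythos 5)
The paper offers no free-standing proof of this corollary: it is read off directly from the second part of Theorem \ref{Ent0} (for a homeomorphism, $x\notin\alpha(x)$ forces $o(f)\geq[n]$), combined with $o(f)=o(f^{-1})$ from Appendix B to convert $\alpha$-recurrence into $\omega$-recurrence. Your separated-set construction for the first claim is the same mechanism run in forward time, and for homeomorphisms it is correct in substance --- though note that the points $f^{-iN}(x)$, $0\le i\le n$, are $(nN,\epsilon_0)$-separated rather than $(n,\epsilon_0)$-separated, since the separating iterate $f^{jN}$ occurs at time $jN$; this only rescales $n$ by the constant $N$ and still yields $o(f)\geq[n]$. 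The genuine gap in this half is the passage to non-invertible $f$: you invoke $o(\widehat f)=o(f)$ for the natural extension as the ``analogue'' of the classical fact, but the classical proof (Bowen's inequality $h(\widehat f)\le h(f)+\sup_y h(\widehat f,\pi^{-1}(y))$ plus vanishing fiber entropy) only controls the exponential growth rate; a fiber of zero topological entropy can still contribute a nontrivial sub-exponential order of growth, which is precisely the regime this corollary lives in. The inequality $o(\widehat f)\ge o(f)$ is easy (factors), but $o(\widehat f)\le o(f)$ is exactly what you need and is not established. (The paper is equally silent on the non-invertible case, so you have identified a real issue, but you have not closed it.)

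The second claim is where the proposal breaks down irreparably. After Baire you obtain some $p$ with $\mathrm{Fix}(f^p)$ of nonempty interior; the ``propagation'' to $\mathrm{Fix}(f^p)=M$ from connectedness alone is unjustified for a general compact connected metric space (this is essentially Montgomery's theorem and needs a manifold, or at least local structure). Worse, even granting $f^p=\mathrm{id}$, there is no contradiction left to extract: the identity on $[0,1]$, or a rational rotation of $S^1$, is a continuous map of a connected compact metric space with $o(f)=0<[n]$ for which every $\omega$-limit set is a periodic orbit. So the final step ``incompatible with the connected structure of $M$ once trivial cases are excluded'' cannot be made to work --- these are not isolated trivial cases but an open-ended family of counterexamples to the literal statement. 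The honest conclusion is that the second assertion requires an additional hypothesis (at minimum $o(f)\neq 0$, i.e.\ $f$ not Lyapunov stable, and even then Baire plus connectedness alone does not finish); your proof should have surfaced this rather than deferring it to unspecified exclusions.
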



Our next objective is to discuss how to classify dynamical systems through generalized topological entropy. At first glance, one would be tempted to say that $f$ is more chaotic than $g$ if $o(f)>o(g)$. This notion has two problems. Since there is no information loss when considering the generalized topological entropy, $o(f)$ can detect separation of orbits in places where topological entropy can not. For example, if $\Omega(f)$ is the no-wandering set of $f$, a simple conclusion from the variational principle is that  $h(f_{|\Omega(f)}) = h(f)$. However, in the context of generalized topological entropy, this is false. We naturally have that $o(f_{|\Omega(f)}) \leq o(f)$, yet there are examples where the inequality is strict. This means that $o(f)$ can detect separation of orbits in places like the wandering set, and so we consider that this should be taken into account. The strict inequality also holds between other important dynamical sets.

\begin{ex}\label{ExBE}
There exists a map $f:\D^2\to \D^2$ such that $o(f_{|\overline{Rec(f)}}) = 0,$ $o(f_{|\Omega(f)}) = [n]$ and $o(f)\geq [n^2]$. 
\end{ex} 

This example is constructed and explained in subsection \ref{SExBE} and therefore we move on with our discussion. The second problem we have, is that in the context of to\-po\-lo\-gi\-cal entropy, the word ``chaotic" is reserved for maps with positive entropy. However, in our context, we work mostly with maps with vanishing entropy and therefore we would prefer another word for maps with positive generalized entropy. Since generalized entropy implies separation orbits, we choose the word dispersion. Because of this, we propose the following criteria. We say that $f$ is more \emph{dispersive} than $g$ if 
\begin{itemize}
\item $ o(f_{|\Omega(f)})>  o(g_{|\Omega(g)})$ or
\item $ o(f_{|\Omega(f)})= o(g_{|\Omega(g)})$ and $o(f)>o(g)$.
\end{itemize}

We would like stress that we choose to focus in the no-wandering set and the whole space because of preference. It could be very well added in the discussion the limit set, the closure of the recurrent set, the chain recurrent set or the closure of the union of the supports of all the invariant measures. The choice of which sets are to be consider, should depend on the family of maps one is working with.  

We will call the tuple $(o(f_{|\Omega(f)}),o(f))$ the entropy numbers of $f$. With this criteria, we can prove the following:

\begin{teoprin}\label{ClaHS1}
In the space of homeomorphisms of the circle, there are three categories:
\begin{itemize}
\item either $f$  has entropy numbers $(0,0)$ and it is Lyapunov stable; 
\item or $f$ has entropy numbers $(0,[n])$, it is not Lyapunov stable and has periodic points;
\item or $f$ has entropy numbers $([n],[n])$ and it is a Denjoy map.
\end{itemize}
In particular, in the space of homeomorphisms of the circle, Denjoy maps are more dispersive than Morse-Smale maps which are more dispersive than rotations.
\end{teoprin}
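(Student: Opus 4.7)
The plan is to work case by case according to the Poincar\'e classification of circle homeomorphisms, reducing the orientation-reversing case to the orientation-preserving one by passing to $f^2$ (which shares the same entropy numbers). In each case the lower bounds for $o(f)$ come from Theorem \ref{Ent0} and the upper bounds from a direct counting of $(n,\epsilon)$-generators. The first case is easy: if $f$ is conjugate to a rotation then it is isometric in an equivalent metric, hence Lyapunov stable, and Theorem \ref{Ent0} gives $o(f)=o(f|_{\Omega(f)})=0$; conversely, a Lyapunov-stable circle homeomorphism is classically conjugate to a rotation, so the entropy numbers $(0,0)$ exactly characterize this class.

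Suppose next that the rotation number of $f$ is $p/q$ in lowest terms and that $f$ is not Lyapunov stable. Then $f^q$ is orientation-preserving with fixed points and is strictly monotone between consecutive fixed points, so $\Omega(f)=\mathrm{Fix}(f^q)$. Since $f^q|_{\Omega(f)}=\mathrm{id}$, the family $\{f^i|_{\Omega(f)}\}_{i\in\Z}$ is finite and equicontinuous, so $f|_{\Omega(f)}$ is Lyapunov stable and Theorem \ref{Ent0} yields $o(f|_{\Omega(f)})=0$. Any point $x$ in a wandering arc has $\alpha(x)$ equal to the repelling periodic orbit on the boundary of that arc, hence $x\notin\alpha(x)$, so Theorem \ref{Ent0} gives $o(f)\geq[n]$. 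For the matching upper bound I conjugate $f^q$ on each of the finitely many wandering arcs to the translation $t\mapsto t+1$ on $\R$; the conjugating chart has bounded derivative, so covering the length-$n$ effective window in $\R$ with points spaced by a constant multiple of $\epsilon$ gives $g(f,\epsilon,n)=O(n)$ and hence $o(f)\leq[n]$.

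Suppose finally that $f$ is a Denjoy counterexample. The classical structure furnishes a minimal Cantor set $K=\Omega(f)$ and a monotone semi-conjugacy $h\colon S^1\to S^1$ with $h\circ f=R_\alpha\circ h$ for irrational $\alpha$, collapsing each wandering interval of $K$ to a point. Any $x\in S^1\setminus K$ satisfies $\alpha(x)=K\not\ni x$, so Theorem \ref{Ent0} delivers $o(f)\geq[n]$. The delicate point is $o(f|_K)\geq[n]$, since Theorem \ref{Ent0} does not apply to the minimal map $f|_K$. Here I pick two points $p,q$ lying in two distinct wandering gaps $V_p,V_q$ of $K$ and code $K$-orbits by the partition of $S^1\setminus\{p,q\}$ into two arcs. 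Since this coding factors through the rotation $R_\alpha$, the number of length-$n$ codes grows linearly in $n$, and two $K$-points whose codes differ at some time $i\leq n$ lie on opposite sides of $V_p$ or $V_q$ at that time and are therefore $(n,\min(|V_p|,|V_q|)/2)$-separated. The matching upper bound $o(f|_K),\,o(f)\leq[n]$ follows because for each fixed $\epsilon$ only finitely many gaps of $K$ have size $\geq\epsilon$, and tracking the preimages of their endpoints at times $0,\dots,n$ gives a generator of size $O(n)$.

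The concluding ``in particular'' clause is then immediate from the total ordering $(0,0)<(0,[n])<([n],[n])$ of the entropy-number tuples. I expect the main technical obstacle to lie in the two upper bounds $o(f)\leq[n]$: both demand a precise geometric analysis of how open arcs and wandering gaps distort along iterates, and in the Denjoy case one additionally needs the linear Sturmian-style complexity estimate to rule out super-linear separation on $K$.
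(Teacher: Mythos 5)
Your overall architecture (the Poincar\'e trichotomy, Theorem \ref{Ent0} for all the lower bounds, and the reduction of orientation-reversing maps to $f^2$) matches the paper, and your Denjoy lower bound on $\Omega(f)$ via a two-arc Sturmian coding is a correct alternative to the paper's argument, which instead counts the backward iterates of the finitely many wandering gaps of length greater than $\epsilon$ and takes boundary points of those gaps as an $(n,\epsilon)$-separated set in $\Omega(f)$. The genuine gaps are exactly where you predicted them, in the two upper bounds, and neither of your proposed fixes works as stated. In the rational case, the conjugacy of $f^q$ on a wandering arc to the translation $t\mapsto t+1$ is only topological: for a homeomorphism there is no derivative to bound, and even for diffeomorphisms the conjugating chart necessarily blows up the metric near the endpoints of the arc (the fundamental domains shrink to zero length), so points spaced by a constant multiple of $\epsilon$ in the $\R$-coordinate do not form an $(n,\epsilon)$-generator in the original metric. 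In the Denjoy case, tracking the preimages at times $0,\dots,n$ of the endpoints of the finitely many gaps of size at least $\epsilon$ cuts $S^1$ into $O(n)$ arcs, but two points in the same arc of this partition can sit deep inside a large gap (or a long complementary arc) at some time $i\leq n$ and be at distance far exceeding $\epsilon$; this family is not a generator.

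Both upper bounds are obtained in the paper by one uniform argument that you are missing: cover $S^1$ by $k=k(\epsilon)$ arcs $I_1,\dots,I_k$ of length $\epsilon$ and use the injectivity of the itinerary map on an $(n,\epsilon)$-separated set (Lemma \ref{LemPhiInj}). The key one-dimensional observation is that every nonempty itinerary set $\bigcap_{j=0}^{n-1}f^{-j}(I_{i_j})$ is an interval whose two endpoints are endpoints of the $kn$ intervals $f^{-j}(I_i)$, so there are at most $4kn$ admissible itineraries and hence $s_{f,\epsilon}(n)\leq 4kn$ for \emph{every} circle homeomorphism. This gives $o(f)\leq[n]$ in all cases at once, and the Denjoy upper bound on the nonwandering set follows for free from $o(f,\Omega(f))\leq o(f)$. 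I recommend replacing both of your upper-bound sketches with this counting argument.
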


We would like to recall that every homeomorphism of the circle has zero topological entropy. Therefore, with generalized topological entropy we can tell apart maps which are indistinguishable by topological entropy. 

With this perspective, we can not say that irrational rotations are dynamically more complex than rational rotations since both of them have entropy numbers  $(0,0)$. On the other hand, Morse-Smale maps have bigger entropy numbers than irrational rotations. Now, the extra complexity of the irrational rotations comes from the structure of the orbits and not from the separation of the orbits itslef. This implies that in the context of 
vanishing entropy, orbit structure and dispersion of orbits are not intrinsically related as in the context of positive entropy.  In particular, we take  that in the context of homeomorphisms of the circle, both the rotation number and the generalized entropy are the key to classify them.


We continue with our study of maps with vanishing entropy, through reviewing previous works. In all of them, it is studied the polynomial entropy of dynamical systems. From our point of view, polynomial entropy is not a sufficient tool to measure dispersion of orbits on maps with vanishing entropy and this is going to be shown in Theorem \ref{CyCa}. For now, we move into explaining what is polynomial entropy. This concept was introduced by Marco in the context of integrable Hamiltonian maps in \cite{Ma13} and the definition is
\[h_{pol}(f) =\lim_{\epsilon\to 0}\limsup_n \frac{log(g_{f,\epsilon}(n))}{log(n)}.\]

If we define the family of polynomial orders of growth by $\pol=\{[n^t]\in \OR: t\in (0,\infty)\}$, then by the arguments of Theorem \ref{Rel} we infer that
\[\pi_{\pol}(o(f))=h_{pol}(f).\]

Figure \ref{FigEspacio} is a representation of the set $\{o(f)\in \OOR:f\text{ is a continuous map}\}$ that we add to give some perspective.

\begin{figure}[!htp] 
\centering
\includegraphics[width=\textwidth]{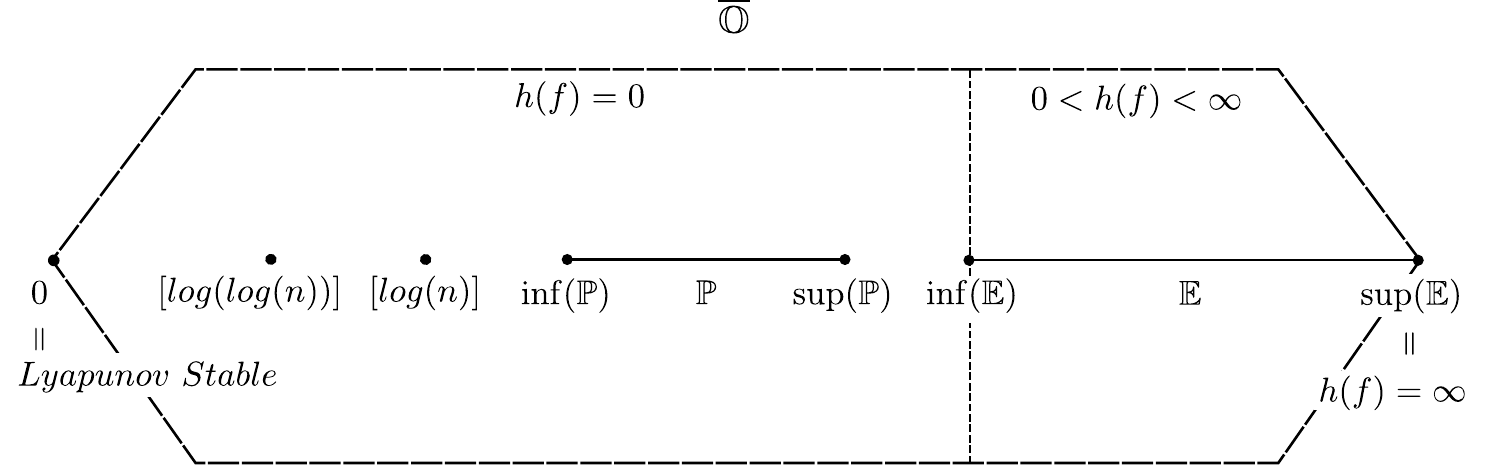}
\caption{$\{o(f)\in \OOR:f\text{ is a continuous map}\}$}
\label{FigEspacio}
\end{figure}

The polynomial entropy of a map has been studied first by  Labrousse in \cite{La13}. In her work, she studies the polynomial entropy of flows in the torus and the polynomial entropy of circle homeomorphisms. In particular, for the circle homeomorphisms she shows that the polynomial entropy is always $0$ or $1$ and that $0$ is only taken by homeomorphisms conjugate to a rotation. Theorem \ref{ClaHS1} is more general for two reasons. First, we take into account the no-wandering set. Secondly, we observe that saying that $o(f)=[n]$ is stronger than saying $h_{pol}(f)=1$ because, for example, $\pi_{\pol}([log(n)n])=1$. 

A second work in polynomial entropy is \cite{BeLa16} from Bernard and Labrousse where they study the polynomial entropy of geodesic 
flows for riemannian metrics on the two torus. There, they prove that the geodesic flow has polynomial entropy $1$ if and only if the torus is isometric to a flat torus.

After this, comes the work from Artigue, Carrasco-Olivera and  Monteverde \cite{ArCOMo18}, where they show two examples:
\begin{enumerate}
\item A continuous map $f:M\to M$, where $M$ is a compact metric space, such that $h_{pol}(f)=0$, yet $f$ is not Lyapunov stable.
\item For each $c>1$, a continuous map $f:M\to M$, where $M$ is a compact metric space, such that $\frac{1}{c+1} \leq h_{pol}(f)\leq\frac{1}{c}$.
\end{enumerate}

In our context, through their technique more can be said. In fact,  the first example satisfies $o(f)= [log(n)]$ and the second one satisfies $[n^{\frac{1}{c+1}}] \leq o(f) \leq [n^{\frac{1}{c}}]$. 

Finally, in \cite{HaLR18},  Hauseux and Le Roux study the polynomial entropy of Brouwer homeomorphisms. We would like to point out, that since all the points in a Brouwer homeomorphism are wandering, there is no recurrence involved in the entropy of such maps. In their work, they define the wandering  polynomial entropy of a map. They prove that a Brouwer homeomorphism has wandering polynomial entropy $1$ if and only if it is conjugate to a translation. No Brouwer homeomorphism has wandering polynomial entropy in the open interval $(1,2)$. And, for every $\alpha\in[2,\infty]$  there exists a Brouwer homeomorphism $f_\alpha$ with wandering polynomial entropy $\alpha$. 

Their results can also be translated and extended to our context. However, since this is a work in progress of the first author with a PhD student, we are going to update this article with the appropriate reference later on. 

Having  now discussed previous works, we question if only studying polynomial orders of growth is sufficient to understand maps with vanishing entropy. Since we have  complete picture of homeomorphisms of the circle, we move into studying generalized entropy on surfaces.


In our following example, we are going to construct a family of transitive maps, all of them with $0$ topological entropy and such that the generalized entropies form an interesting set in $\OOR$.  We also would like to argue that studying the generalized entropy is necessary, and that polynomial entropy is not enough. 

Our next theorem talks about the generalized entropy of cylindrical cascades. For us, a cylindrical cascade is a map $f:S^1\times \R \to S^1\times \R $ of the form $f(x,y)=(x+\alpha, y +\varphi(x))$ where $\varphi:S^1\to \R$ is a $C^1$ map. We will call $\Cy$ the family of cylindrical cascades. When studying cylindrical cascades is commonly considered higher dimension and higher regularity. However, for our purposes this set up is going to be sufficient. A relevant fact about cylindrical cascades that we would like to point out that, is that all of them are isotopic to the identity.

Dynamical properties of these maps have been studied by many researchers. Recurrence in higher dimension has been studied by Yoccoz in \cite{Yo80}, \cite{Yo95} and Chevalier and Conze in \cite{ChCo09}. Transitivity has been studied by Gottschalk and Hedlund in \cite{GoHe55} and examples were given by Sidorov in \cite{Si73}. Ergodic properties have been studied by Krygin in \cite{Kr74} and Conze in \cite{Co09} for the case $S^1\times \R$. For higher dimension Conze in \cite{Co80} worked in the case of fibers in the Heisenberg group and most notably Cirilo and Fayad have announced genericity of ergodic maps in the general case $\T^d\times \R^r$.  

Since $S^1\times \R$ is not a compact space, we would like to observe that this is not a problem. By the definition of the cylindrical cascades, we could very well project them in $\T^2$ and work there. Or, we could also define generalized topological entropy in non-compact spaces in the same way is done by Bowen in \cite{Bo71}. Since the projection from $S^1\times \R$ to $\T^2$ is a local isometry both solutions are equivalent. This is, both a cylindrical cascade and its projection, have the same generalized entropy. We would like to clarify that Theorem \ref{Inv} also holds in the non-compact case, yet only for uniformly continuous conjugations. For more details on the non-compact case, check subsection \ref{NCCSS}. 

In the following theorem, we construct cylindrical cascades with arbitrarily slow generalized entropy.   

\begin{teoprin}\label{CyCa}
For every $o\in \OOR$ there exists a cylindrical cascade $f\in \Cy$ such that $f$ is transitive and $0<o(f)\leq o$. Moreover, the maps in $\Cy$ which verify this, are dense in $\Cy$.  
\end{teoprin}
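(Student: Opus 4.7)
The plan is to reduce the generalized entropy of a cylindrical cascade $f(x,y)=(x+\alpha,\,y+\varphi(x))$ to the growth rate of the Lipschitz constants of the Birkhoff sums $S_n\varphi=\sum_{j=0}^{n-1}\varphi\circ R_\alpha^j$, then to exploit Liouville approximation to prescribe that growth rate while preserving transitivity; density will follow because the construction only adds to any given $(\alpha_0,\varphi_0)\in\Cy$ small perturbations supported on a sparse set of Fourier modes resonant with $\alpha$.

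First I would compute the dynamical balls. Since $f^k(x,y)=(x+k\alpha,\,y+S_k\varphi(x))$, the points $(x,y)$ and $(x',y')$ are $(n,\epsilon)$-close iff $d_{S^1}(x,x')\le\epsilon$ and, for every $0\le k\le n$,
\[
d_{S^1}\bigl((y-y')+S_k\varphi(x)-S_k\varphi(x'),\,0\bigr)\le\epsilon.
\]
Using $\varphi\in C^1$ and $|x-x'|\le\epsilon$, a direct parallelogram count within each $\epsilon$-wide $x$-cylinder gives
\[
g_{f,\epsilon}(n)\,\asymp\,\frac{1+V_n(\varphi)}{\epsilon^{2}},\qquad V_n(\varphi):=\max_{0\le k\le n}\|S_k\varphi'\|_\infty,
\]
so that $o(f)=[\,1+V_n(\varphi)\,]\in\OOR$. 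In particular $o(f)>0$ exactly when $V_n(\varphi)\to\infty$; and since $V_n$ bounded implies that $f$ is equicontinuous, transitivity automatically gives $V_n\to\infty$. The problem therefore reduces to choosing $\alpha$ and $\varphi$ so that $V_n(\varphi)$ grows more slowly than any prescribed sequence while keeping $f$ transitive.

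Given $o\in\OOR$, fix a representative $u_n\to\infty$. Take a Liouville $\alpha$ whose convergents $p_k/q_k$ satisfy $\|q_k\alpha\|^{-1}$ as large as desired, and set
\[
\varphi(x)=\sum_{j\ge 1}c_j\cos(2\pi q_{k_j}x+\theta_j)
\]
for a sparse sequence $k_j$ and coefficients with $\sum_j c_j q_{k_j}<\infty$ (so $\varphi\in C^1$). The geometric-sum identity
\[
\Bigl|\sum_{m=0}^{n-1}e^{2\pi iq(x+m\alpha)}\Bigr|\,\lesssim\,\min\!\Bigl(n,\frac{1}{\|q\alpha\|}\Bigr)
\]
bounds the $j$-th mode's contribution to $\|S_n\varphi'\|_\infty$ by $C\,c_jq_{k_j}\min(n,\|q_{k_j}\alpha\|^{-1})$; a diagonal choice of $c_j$, $k_j$, and the Liouville exponents of $\alpha$ yields $V_n(\varphi)\le u_n$ while $V_n\to\infty$. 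Transitivity is obtained by treating the phases $\theta_j$ as free parameters in a Sidorov/Anosov--Katok iterative scheme: at stage $j$ one fixes $\theta_j$ so that a distinguished reference orbit meets the $j$-th open set in a countable base of $S^1\times\R$, under the constraint that later adjustments are so small (in terms of $\|q_{k_j}\alpha\|$) that they neither undo the previously achieved visits nor break the $V_n$-bound.

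Density in $\Cy$ is then immediate: given $(\alpha_0,\varphi_0)\in\Cy$ and $\delta>0$, approximate $\alpha_0$ by a Liouville $\alpha$ within $\delta$ and replace $\varphi_0$ by $\varphi_0+\psi$, with $\psi$ a series of the above type and $\|\psi\|_{C^1}<\delta$; the resulting cascade is transitive with $0<o(f)\le o$ and $C^1$-close to $f_0$. The technical heart of the argument is the simultaneous control just highlighted: transitivity demands that the Birkhoff sums over the fiber $\R$ eventually visit every open set, while $V_n\le u_n$ forces them to do so arbitrarily slowly. The Liouville mechanism reconciles the two because $\|q_{k_j}\alpha\|^{-1}$ can be made to dominate any prescribed sequence, yet the sparsity and smallness of the $c_j$ prevent the cumulative growth from exceeding $u_n$; making the iterative phase adjustments compatible with both constraints is the main obstacle.
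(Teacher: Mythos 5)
Your upper-bound machinery is essentially the paper's: a lacunary cosine series $\sum_j c_j\cos(2\pi q_{k_j}x+\theta_j)$ at denominators of the convergents, the estimate $|S_n(\varphi_{k_j}')|\lesssim c_jq_{k_j}\min\bigl(n,\|q_{k_j}\alpha\|^{-1}\bigr)$ (the paper derives the two halves of this $\min$ separately, once via an explicit coboundary and once by comparing $S_n(\varphi_k')$ with $-2\pi b_kq_k\sin(2\pi q_kx)n$), and a diagonal choice of coefficients keeping the sum below $u_n$. But three steps are asserted rather than proved, and the most serious is transitivity. You propose to steer a reference orbit through a countable base by adjusting the phases $\theta_j$ in a Sidorov/Anosov--Katok scheme, and you yourself flag the compatibility of those adjustments with the bound $V_n\le u_n$ as ``the main obstacle'' --- that is exactly the part that is not done. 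The paper avoids it entirely: by Gottschalk--Hedlund, a cylindrical cascade over an irrational rotation is transitive if and only if the cohomological equation has no continuous solution, and a continuous solution would make $f$ Lyapunov stable, hence $o(f)=0$. So once $o(f)>0$ is established, transitivity is automatic; no phases and no orbit-steering are needed.

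Second, your identification $o(f)=[\,1+V_n(\varphi)\,]$ with $V_n=\max_{k\le n}\|S_k\varphi'\|_\infty$ is only justified in one direction by the ``parallelogram count''. The upper bound $g_{f,\epsilon}(n)\lesssim (1+V_n)/\epsilon^{2}$ is fine, but the lower bound is not: a large sup-norm of $S_k\varphi'$ at one point does not produce many $(n,\epsilon)$-separated points if the set where the derivative is large is small. Since you need $o(f)>0$, you must actually exhibit growing separated sets; the paper does this by showing $|S_{n_k}(\varphi_k')|\gtrsim \sqrt{b_kq_kq_{k+1}}$ on a set $\Lambda_k=\{x:|\sin(2\pi q_kx)|>1/\sqrt{2}\}$, which is a union of $q_k$ intervals of length comparable to $1/q_k$, whence $s_{f,\epsilon}(n_k)\ge q_k\to\infty$. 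Something of this sort must be supplied. Finally, in the density argument you keep $\varphi_0$ and merely add $\psi$; the Birkhoff sums of a general $C^1$ function $\varphi_0$ over the new Liouville $\alpha$ are not under control, so the upper bound $o(f)\le o$ may fail for $\varphi_0+\psi$. The paper first replaces $\varphi_0$ by a trigonometric polynomial, which (up to its mean) is a continuous coboundary over the perturbed rotation and therefore contributes only a bounded amount to the separation of orbits.
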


This theorem implies that for the family of cylindrical cascades, polynomial entropy is not sufficient. If we consider $o= \inf(\pol)$, then we obtain a dense set of maps in $\Cy$ with $0$ polynomial entropy. 

We would like to compare our approach with another natural perspective in measuring the separation of orbits in a family of dynamical systems. Given a order of growth $[b(n)]$ we can construct the one parameter family of orders of growth $\B=\{[b(n)^t]:0<t<\infty\}$. The set $\B$ is a natural generalization  of the sets $\E$ and $\pol$. In fact, if $b(n) = e^n$, then $\B = \E$ and if $b(n)= n$, then $\B =  \pol$.  If we define 
\[h_\B(f)= \lim_{\epsilon\to 0}\limsup_n \frac{log(g_{f,\epsilon}(n))}{log(b(n))},\] 
then by the arguments of Theorem \ref{Rel} we deduce that $\pi_{\B}(o(f))= h_\B(f)$.

This gives a natural approach: Given a family of dynamical systems $\Ha$, instead of working with $o(f)$, find a order of growth $[b(n)]$ such that for any $f$ in $\Ha$, $0 < h_\B(f) < \infty$. This perspective is tempting because dealing with  $\lim_{\epsilon\to 0}\limsup_n \frac{log(g_{f,\epsilon}(n))}{log(b(n))}$ seems technically easier than $o(f)$.  We have two objections to this. First, from our experience, computing $o(f)$ it is not much more difficult than computing $h_\B(f)$ for maps with $0$ topological entropy. Also, by Theorem \ref{CyCa} this approach is not enough for the family of cylindrical cascades. Given a order of growth $[b(n)]$ we know $0 < [log(b(n))]<\inf(\B)$. By Theorem \ref{CyCa}, there exists a dense set of maps in $\Cy$ such that $0 < o(f) \leq [log(b(n))]$. This implies that for any $\B$, there exists a dense set in $\Cy$ with $h_\B(f) =0$. Because of this, we conclude that in order to understand how cylindrical cascades separate orbits, we need to study their generalized topological entropy. Figure \ref{FigB} represents the previous argument. 

\begin{figure}[!htp]

\centering
\includegraphics{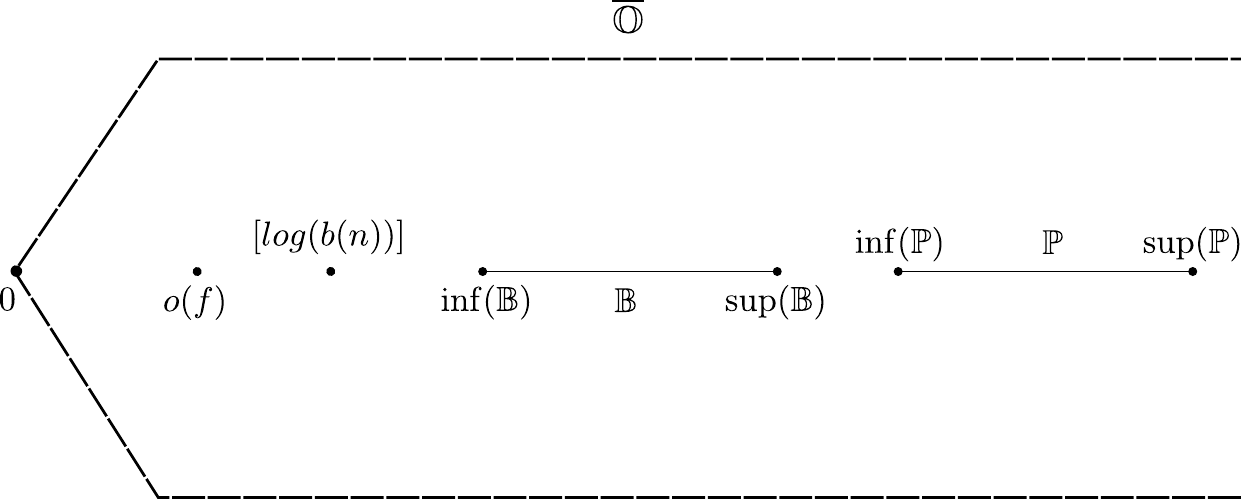}
\caption{generalized entropy of cylindrical cascades and 1-parameter families of orders of growth.}
\label{FigB}
\end{figure}


We would like now to show, how the concept of generalized entropy allows us to formulate new questions, and also enrich our perspective. Let us recall Shub's entropy conjecture and what is known so far. Given $M^m$ a manifold of dimension $m$ and $f:M\to M$ a diffeomorphism, for each $k$ in $\{0,\dots, m\}$, consider $f_{*,k}: H_{k}(M,\R)\to  H_{k}(M,\R)$ the action induced by $f$ on the real homology groups of $M$. If $sp(f_{*,k})$ is the spectral radius of $f_{*,k}$ and $sp(f_*)=\max\{sp(f_{*,k}): 0\leq k \leq dim(M)\}$, then Shub conjectured in \cite{Sh74} that
\[log(sp(f_*))\leq h(f).\]

Manning proved in \cite{Ma75} that the weaker inequality $log(sp(f_{*,1}))\leq h(f)$ always holds for homeomorphisms in any dimension. In particular, this implies that the conjecture is always true for homeomorphisms for $m\leq 3$. This result was then improved by Bowen in  \cite{Bo78} where he studied the action in the first fundamental group instead of the first homology group. 

 From the work of Palis, Pugh, Shub and Sullivan in \cite{PPSS} and Kirby and Siebenmann in \cite{KiSi69} can be concluded that the conjecture holds for an open and dense subset of the space of homeomorphisms when $m\neq 4$.

Marzantowicz, Misiurewicz and Przytycki proved in \cite{MaPr08}, \cite{MiPr77a} that the conjecture is also known to hold for homeomorphisms on any infra-nilmanifold. Some weaker versions of the conjecture were proved by Ivanov in \cite{Iv82}, Misiurewicz and Pryztycki in \cite{MiPr77b} and Oliveira and Viana in \cite{OlVi08}. 

A major progress in the conjecture was obtained by Yomdin in \cite{Yo87} where he proved the conjecture for every $C^\infty$ diffeomorphism. When restricted to classes of dynamical systems with some kind of hyperbolicity, the conjecture was proved by Shub and Williams in \cite{ShWi75}, Ruelle and Sullivan in \cite{RuSu75}, and Saghin, Xia in \cite{SaXi10}. So far, the strongest statement of this kind is the one from Liao, Viana and Yang in \cite{LiViYa13} where they prove the conjecture for every diffeomorphisms away from tangencies.  

What is lacking in this context, is a description for maps such that $sp(f_*)=1$. We would like to observe that the environment of generalized topological entropy provide us a language to study such problem. Our following theorem is a contribution to this topic.

\begin{teoprin}\label{Shub}
Let $M$ be a manifold of finite dimension and $f:M\to M$ be a homeomorphism. If $sp(f_{*,1})=1$, then there exists $k$ which only depends on $f_{*,1}$ such that $[n^k] \leq o(f)$. Moreover, $k$ is computed as follow. Consider $J$ the Jordan normal form associated to a matrix that represents $f_{*,1}$. Let $k_\R$ be the maximum dimension among the Jordan blocks associated to either $1$ or $-1$. Let $k_\C$ be the maximum dimension among Jordan blocks associated to complex eigenvalues. Then, $k = \max\{k_\R, k_\C /2\} - 1$. 
\end{teoprin}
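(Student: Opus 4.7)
The idea is to adapt Manning's classical argument (giving $\log sp(f_{*,1}) \leq h(f)$) to the degenerate case $sp(f_{*,1})=1$, by replacing exponential growth of $\|f_{*,1}^n v\|$ with the polynomial growth of order $n^k$ produced by the Jordan structure on the unit circle. The proof proceeds in three stages: linear algebra to find a homology class with polynomial growth, realization by a loop whose iterates have large length, and conversion of this length to many $(n,\epsilon)$-separated points.

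First, I would analyse the real Jordan form of $F := f_{*,1}$. Since $sp(F)=1$, the growth of $\|F^n v\|$ is controlled by blocks with eigenvalues on the unit circle: a real block of real dimension $d$ attached to $\pm 1$ produces growth $\asymp n^{d-1}$ on its invariant subspace, while a real block of real dimension $2d$ attached to a complex pair of modulus one produces growth $\asymp n^{d-1}$. The maximum is precisely $n^{k}$ with $k=\max\{k_\R,k_\C/2\}-1$. Because $F$ is integral in a basis of $H_1(M,\Z)/\mathrm{tors}$, the dominant invariant subspace is defined over $\Q$, and I pick $v\in H_1(M,\Z)$ in it so that $\|F^n v\|\geq c\,n^k$. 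By Hurewicz, $v$ is represented by a loop $\gamma\subset M$; then $f^n\gamma$ represents $F^n v$, and the stable norm $\|\cdot\|_s$ on $H_1(M,\R)$, equivalent to any fixed norm since $\dim H_1(M,\R)<\infty$, gives $\mathrm{length}(f^n\gamma)\geq \|F^n v\|_s \geq c'\,n^k$.

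Next, I would pass to the universal abelian cover $\pi:\hat M\to M$ with deck group $H:=H_1(M,\Z)/\mathrm{tors}$ acting by isometries, fix a lift $\hat f$ of $f$ satisfying $\hat f\circ\tau=(F\tau)\circ\hat f$ for $\tau\in H$, and a lift $\tilde\gamma$ of $\gamma$ whose endpoints differ by the deck transformation $v$. Then $\hat f^n\tilde\gamma$ is a continuous path in $\hat M$ whose endpoints differ by $F^n v$, so they lie at $\hat d$-distance at least a constant times $n^k$. Applying the intermediate value theorem to the continuous function $\hat d(\,\cdot\,,\hat f^n\tilde\gamma(0))$ along $\hat f^n\tilde\gamma$ yields $N\asymp n^k/\epsilon$ points $\hat q_j=\hat f^n\hat x_j$ on this path which are $\epsilon$-separated in $\hat M$ (by the reverse triangle inequality). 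The goal is then to show that the projected points $x_j=\pi(\hat x_j)\in\gamma$ are $(n,\epsilon_0)$-separated in $M$ for a suitable $\epsilon_0$, whence $s(f,\epsilon_0,n)\gtrsim n^k$, and the standard inequality $g(f,\epsilon_0/2,n)\geq s(f,\epsilon_0,n)$ will give $[g_{f,\epsilon_0/2}(n)]\geq [n^k]$, so $o(f)\geq [n^k]$.

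The main obstacle is this last descent: $f^n\gamma$ may self-intersect in $M$, so $\epsilon$-separation of the $\hat q_j$ in $\hat M$ need not pass to $\epsilon$-separation of the $q_j$ in $M$. My plan is to exploit two facts simultaneously: (i) the systole $\sigma>0$ of $\pi:\hat M\to M$, which yields $d(p,q)=\hat d(\hat p,\hat q)$ whenever $\hat d(\hat p,\hat q)<\sigma/2$; and (ii) the uniform continuity of $\hat f$ on $\hat M$, which follows from compactness of $M$ and equivariance. Choosing $\delta'\in(0,\sigma/2)$ so small that $\hat d(\hat a,\hat b)<\delta'$ implies $\hat d(\hat f\hat a,\hat f\hat b)<\sigma/2$, and setting $\epsilon_0:=\delta'/2$, for any pair $j,j'$ with $|j-j'|\epsilon\geq \delta'$ the discrete function $i\mapsto \hat d(\hat f^i\hat x_j,\hat f^i\hat x_{j'})$ starts below $\delta'$ (take $\gamma$ short) and ends above $\delta'$, so first crosses $\delta'$ at some $0\leq i_*\leq n$ with value still less than $\sigma/2$ by the choice of $\delta'$. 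The systolic identity then forces
\[d(f^{i_*}x_j,f^{i_*}x_{j'}) = \hat d(\hat f^{i_*}\hat x_j,\hat f^{i_*}\hat x_{j'}) \geq \delta' > \epsilon_0,\]
so $d_n(x_j,x_{j'})>\epsilon_0$. This delivers $\gtrsim n^k$ pairwise $(n,\epsilon_0)$-separated points on $\gamma$, completing the argument. In essence, although $d$-separation at time $n$ is destroyed by the possibly wild winding of $f^n\gamma$, the existence of an \emph{intermediate} time $i_*$ at which the lifted orbits separate by an amount still under the systolic threshold lets the separation descend faithfully to $M$.
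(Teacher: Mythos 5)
Your argument is correct and in essence the same as the paper's: the paper invokes Manning's inequality $\|A^{n-1}\|\leq 12(1+g_{f,\epsilon}(n))$ as a black box and pairs it with the identical Jordan-block computation $[\|J^{n}\|]=[n^{k}]$, while you simply unpack Manning's covering-space argument to produce the $(n,\epsilon_0)$-separated sets directly. The one soft spot is the parenthetical ``take $\gamma$ short'' (impossible, since $\gamma$ represents a nontrivial class and so has length bounded below), but it is repaired by a pigeonhole: partition $\tilde\gamma$ into a bounded number of sub-arcs of diameter $<\delta'$ and retain only the $\gtrsim n^{k}$ points $\hat x_j$ lying in the most populated sub-arc, so that every retained pair starts below $\delta'$ at time $0$ and your crossing argument applies.
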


We would like to recall that the examples built in Theorem \ref{CyCa} can be projected into $\T^2$ and all of them are isotopic to the identity. In particular, for the identity, $k=0$ and no lower bound can be obtained in this category. We compile this information in the following corollary. 

\begin{coroprin}
In the space $Hom(\T^2)$ there are three categories:
\begin{itemize}
\item Either $f_{*,1}$ is hyperbolic and $log(sp(f_{*,1}))\leq h(f)$.
\item Or  $f_{*,1}$ is a Dehn twist and $[n]\leq o(f)$. 
\item Or $f_{*,1}$ is a matrix of the form
\[A = 
\begin{pmatrix}
\pm 1 & 0 \\
0 & \pm 1
\end{pmatrix},
\] 
in which there are elements in the isotopy class with arbitrarily slow generalized entropy. 
\end{itemize}
\end{coroprin}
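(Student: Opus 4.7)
The plan is a case analysis on the matrix $A = f_{*,1}\in GL(2,\Z)$. Since $A$ is a $2\times 2$ integer matrix, either $sp(A) > 1$ or $sp(A) = 1$, and in the latter case the eigenvalues of $A$ are roots of unity. The three bullets of the corollary correspond respectively to $sp(A) > 1$, to $sp(A) = 1$ with $A$ not diagonalizable, and to $sp(A) = 1$ with $A$ diagonalizable and real spectrum.

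The first case is immediate from the Manning inequality $\log(sp(f_{*,1})) \leq h(f)$ recalled in the introduction. For the second case, the non-diagonalizability of $A$ forces both eigenvalues to coincide; since a pair of complex conjugate eigenvalues of a $2\times 2$ real matrix is automatically distinct and hence yields a diagonalizable matrix, the repeated eigenvalue must be real, and the spectral-radius constraint then pins it down to $\pm 1$. Consequently $A$ is $GL(2,\Z)$-conjugate to a Dehn twist $\begin{pmatrix} \pm 1 & 1 \\ 0 & \pm 1\end{pmatrix}$, and applying Theorem \ref{Shub} with $k_\R = 2$ and $k_\C = 0$ yields $k = 1$, hence $[n] \leq o(f)$.

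For the third bullet I would invoke Theorem \ref{CyCa}. Any cylindrical cascade on $S^1\times\R$ descends to a homeomorphism of $\T^2$ isotopic to the identity, and the theorem provides a dense family of transitive cascades with $o(f)$ dominated by any prescribed $o\in\OOR$, settling the case $A = I$. For the three remaining involutions $A\in\{-I,\mathrm{diag}(1,-1),\mathrm{diag}(-1,1)\}$, I would fix a linear isometric involution $h_A:\T^2\to\T^2$ with $(h_A)_{*,1} = A$ and set $F = h_A\circ g$ where $g$ is a cylindrical cascade with very slow generalized entropy, so that $F_{*,1} = A$ automatically. To bound $o(F)$ one passes to $F^2 = (h_A g h_A^{-1})\cdot g$; conjugating $g$ by the linear isometry $h_A$ yields another explicit cylindrical cascade, and their composition is again a skew map over $S^1$ amenable to the estimates behind Theorem \ref{CyCa}.

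The main obstacle lies in this last step: while for $A = I$ the conclusion is a direct transcription of Theorem \ref{CyCa}, for the other three involutive classes the composition $h_A\circ g$ is no longer a cylindrical cascade, and the slow-entropy construction must be adapted to the skew-product $F^2$. Once the generalized entropy of $F^2$ (and hence of $F$ itself, using that $h_A$ is an isometry and therefore $o(F) = o(F^2)$ in $\OOR$) is controlled below any prescribed order of growth, the three-way classification is complete.
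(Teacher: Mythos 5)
Your treatment of the first two bullets coincides with the paper's: the hyperbolic case is Manning's inequality, and the Dehn twist case is Theorem \ref{Shub} with a single Jordan block of size $2$ for the eigenvalue $\pm 1$, so $k_\R=2$, $k=1$ and $[n]\leq o(f)$. For the third bullet the paper actually does \emph{less} than you attempt: it only remarks that the cylindrical cascades of Theorem \ref{CyCa} project to $\T^2$ as maps isotopic to the identity, so it exhibits arbitrarily slow examples only in the class of $I$ and is silent about $-I$ and $\mathrm{diag}(\pm1,\mp1)$. Your effort to cover the remaining involutions is where the genuine gap lies.

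The construction $F=h_A\circ g$ fails for the classes that reflect the base circle. Take $A=-I$, $h_A(x,y)=(-x,-y)$ and $g(x,y)=(x+\alpha,y+\varphi(x))$. Then $F^2(x,y)=(x,\;y+\varphi(x)-\varphi(-x-\alpha))$: a skew product over the \emph{identity} of the base, whose fibre displacement $\psi(x)=\varphi(x)-\varphi(-x-\alpha)$ is non-constant for the cocycles of Section \ref{CAS} (it vanishes identically only if $q_k\alpha\in\Z$ for all $k$). By exactly the computation of Example \ref{ExSPA} (Lemma \ref{LemSepIso}), such a non-trivial twist satisfies $o(F^2)\geq[n]$; and since an $(n,\epsilon)$-separated set for $F^2$ is $(2n,\epsilon)$-separated for $F$, also $o(F)\geq[n]$. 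The same happens for $A=\mathrm{diag}(-1,1)$, where $F^2(x,y)=(x,\;y+\varphi(x)+\varphi(-x-\alpha))$. So for these classes your $F$ is not arbitrarily slow: the ``main obstacle'' you flag is fatal to the proposed route, not a technicality. (Only $A=\mathrm{diag}(1,-1)$, whose square is a genuine cascade over $R_{2\alpha}$ with cocycle $\varphi-\varphi\circ R_\alpha$, has a chance of being handled by adapting the estimates, and that adaptation is not carried out.) Two smaller points: the identity $o(F)=o(F^2)$ is not available in $\OOR$ --- the appendix only proves $o(F)\leq o(F^2)$, which fortunately is the direction an upper bound needs; and your reduction to three cases silently discards the elliptic classes ($sp(A)=1$ with complex eigenvalues, e.g.\ trace $0$ or $\pm1$ with determinant $1$), so exhaustiveness of the trichotomy is not established --- an imprecision already present in the statement, but one your proof asserts explicitly and should therefore address.
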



We are going to conclude this introduction with a few observations that were left behind, an example and some questions we consider interesting. 

Let us compute the generalized entropy of an example which is somehow related to the previous theorem. We are going to consider skew-products in the annulus $S^1\times[0,1]$ where the map is the identity in the base $[0,1]$ and rotations of different angles on the fibers $S^1$. Since the identity map in the interval and the rotations of circles are all Lyapunov stable, on each piece the map has $0$ generalized entropy. Therefore, it could be expected that the skew-product also has $0$ generalized entropy. However, this is not the case. 

\begin{ex}\label{ExSPA}
Consider the annulus $\A=S^1\times [0,1]$, $\alpha:[0,1]\to [0,1]$ a continuous increasing map and $R_{\alpha(t)}:S^1\to S^1$ the rotation in the circle of angle $\alpha(t)$. If $f:\A\to \A$ is the homeomorphism defined by $f(s,t)=(R_{\alpha(t)}(s),t)$, then $f$ has entropy numbers $([n],[n])$.\end{ex}

Observe that this example and Denjoy maps have both the same generalized entropy. Yet, the dispersion of orbits of both come from very different structures. The separation of orbits in Denjoy maps come from an expansive dynamic in a cantor set, where the dispersion in the skew-product comes form invariant dynamics moving at different speeds. This shows that generalized entropy is a sensitive tool, and that understanding the phenomena which causes positive generalized entropy could be a delicate problem.

Topological entropy can also be defined using separated sets and this arises a natural question. If we define generalized topological entropy with separated sets, do both de\-fi\-ni\-tions coincide? The answer is yes and we prove this in subsection \ref{NCCSS}. The same question can be asked for open coverings and the answer is also yes. We separate the proof of this to Appendix A, because throughout this article, we are not going to use open coverings.
 
It came to our attention that Egashira in \cite{Eg93} made a similar construction to ours in the context of foliations which was later translated by Walczak in \cite{Wa13} to the context of group actions. He does indeed define orders of growth classes and then ``completes'' his space. However, his orders of growth classes are different from our since he allows comparison between some sub-sequences. On the other hand, he completes his space of orders of growth by considering the abstract limit of sequences of ordered classes. This way of completing the space, if translated to our construction, might result in possibly a smaller set. 

An important topic we have not discussed yet is metric entropy. A first difficult in this topic is the choice of a definition for generalized metric entropy. The classical approach through partitions is inconvenient, mainly because Kolgomorov-Sinai theorem  can not be translated. This implies that in order to compute the generalized metric entropy of a map, one has to understand the metric entropy in every partition. Another interesting fact, is that, if a variational principle happens to be true, then it will only hold in the closure of the union of the supports of the invariant measure and not in the whole space. This observation can be seen in example \ref{ExBE}.

There are many interesting families of dynamical systems with vanishing entropy to study in the context of generalized entropy. We wonder what can be said about reparametrizations of irrational flows, cherry flows, unimodal maps or the quadratic family to mention a few. A question we propose in this topic is the realization of orders of growth. This is, given $\Ha$ a family of dynamical systems such that  $o_i = \inf\{o(f):f\in \Ha\}$ and $o_s = \sup\{o(f):f\in \Ha\}$, does for every $o_i \leq o \leq o_s$ exists $f\in \Ha$ such that $o(f)=o$ ? 

It is also intriguing for us to know how dynamical properties interact with $o(f)$. Theorem \ref{Ent0} and proposition \ref{PropExpHomeo} are results in this vein. We expect that topological mixing or weak-mixing have some impact on $o(f)$. Reciprocally, we would like to know if there is a setting such that positive generalized entropy implies certain growth in the number of periodic orbits. The cylindrical cascades with an irrational rotation have no periodic points, yet we believe that none of them has generalized entropy beyond $[n]$. The examples from \cite{HaLR18} have  only one fixed point and generalized entropy between $[n^2]$ and $\sup(\pol)$. And therefore, some type of recurrence like transitivity is probably required. 

Another important topic in entropy is continuity, and for this we have little hope. One of the problems is that since $\OOR$ is very big, the order topology in $\OOR$ is bad. Also, Theorem \ref{CyCa} shows how chaotic the function $f\to o(f)$ can be. In this family, we expect at most that there is some type of upper continuity in the $C^\infty$ topology.  

This paper is structured as follows: In section \ref{GTE}, we prove Theorems \ref{Inv} and \ref{Rel}. In section \ref{VE}, we prove Theorems \ref{Ent0} and \ref{ClaHS1}, and we also construct and explain examples \ref{ExBE} and \ref{ExSPA}. In section \ref{CAS}, we prove Theorem \ref{CyCa} and in section \ref{SHUB} we prove Theorem \ref{Shub}. Then, in Appendix A we study the generalized topological entropy from the open coverings point of view. Finally, in Appendix B we review some of the classical properties of topological of entropy in the context of generalized topological entropy. 	

\section{Generalized Topological Entropy.} \label{GTE}

In this section, we are going to study the generalized topological entropy of continuous maps. First,  we are going to develop the generalized topological entropy through the point of view of $(n,\epsilon)$-separated sets and we are also going to study the non-compact case. With this, we can prove Theorem \ref{Inv} and \ref{Rel}. 

\subsection{Non compact case and separated sets.}\label{NCCSS}

We start by observing that we can also define the generalized topological entropy of a system when $M$ is not a compact set. We do this in an analogous way as in the definition of entropy. Given $M$ a metric spaces, $f:M\to M$ a continuous map and $K\subset M$ a compact set we say that $E\subset K$ is   $(n,\epsilon)$-generator of $K$ if $K \subset \bigcup_{x\in E} B(x,n,\epsilon)$. Then, we define $g_{f,K,\epsilon}(n)$ equal to the minimum cardinality of an $(n,\epsilon)$-generator of $K$, $G_{f,K} =\{[g_{f,K,\epsilon}(n)]\in \OR:\epsilon > 0\}$ and $o(f,K)= \sup(G_{f,K})\in \OOR$. Finally, we define $o(f) = \sup\{o(f,K)\in \OOR:K\subset M\ is\ compact\}$. 

Another important observation is that the notion of entropy can be defined through $(n,\epsilon)$-separated sets. We will define another generalized entropy through this perspective and see that both notions coincide. Given $M$ a metric space, $f:M\to M$ a continuous map and $K\subset M$ a compact set we say that $E\subset K$ is $(n,\epsilon)$-separated if $B(x,n,\epsilon)\cap E = \{x\}$ for all $x\in E$. We define $s_{f,K,\epsilon}(n)$ the maximal cardinality of an $(n,\epsilon)$-separated set. Analogously, as with $g_{f,K,\epsilon}$ we know that $s_{f,K,\epsilon}$ is a non-decreasing sequence of natural numbers. Then, we define $S_{f,K} = \{[s_{f,K,\epsilon}(n)]\in \OR:\epsilon > 0\}$ and $u(f,K)=\sup(S_{f,K})\in \OOR$. Finally, we define $u(f) = \sup\{u(f,K)\in \OOR:K\subset M\ is\ compact\}$. 

\begin{prop} \label{SepEquGen}
Let us consider $M$ a metric space and $f:M\to M$ a continuous map. If $K\subset M$ a compact set, then $o(f,K) = u(f,K)$. In particular, $o(f) = u(f)$.
\end{prop}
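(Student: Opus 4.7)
The plan is to adapt the classical comparison between generating and separated sets to the setting of orders of growth, then pass to the supremum. The key point is that even though the classes $[g_{f,K,\epsilon}(n)]$ and $[s_{f,K,\epsilon}(n)]$ need not be equal for each fixed $\epsilon$, their families $G_{f,K}$ and $S_{f,K}$ are cofinal in each other inside $\OR$, so they share the same supremum in $\OOR$.

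First I would establish, for every fixed $\epsilon>0$ and every $n\in\N$, the pointwise inequalities
\[
g_{f,K,\epsilon}(n)\ \leq\ s_{f,K,\epsilon}(n)\ \leq\ g_{f,K,\epsilon/2}(n).
\]
The left inequality follows from the standard observation that a maximal $(n,\epsilon)$-separated set $E\subset K$ is automatically $(n,\epsilon)$-generating: if some $y\in K$ were outside every $B(x,n,\epsilon)$ for $x\in E$, then $E\cup\{y\}$ would still be $(n,\epsilon)$-separated, contradicting maximality. For the right inequality, let $E$ be $(n,\epsilon)$-separated and $F$ be an $(n,\epsilon/2)$-generator of minimum cardinality; assign to each $x\in E$ a point $\varphi(x)\in F$ with $x\in B(\varphi(x),n,\epsilon/2)$. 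If $\varphi(x_1)=\varphi(x_2)$ then by the triangle inequality $d_n(x_1,x_2)\leq \epsilon$, forcing $x_1=x_2$ by separation; hence $\varphi$ is injective and $|E|\leq|F|$.

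Passing to orders of growth, these pointwise inequalities give, for every $\epsilon>0$,
\[
[g_{f,K,\epsilon}(n)]\ \leq\ [s_{f,K,\epsilon}(n)]\ \leq\ [g_{f,K,\epsilon/2}(n)],
\]
with all three classes lying in $\OR$. Now I would exploit that both inequalities place $S_{f,K}$ and $G_{f,K}$ into each other cofinally. For the inequality $o(f,K)\leq u(f,K)$, every element $[g_{f,K,\epsilon}(n)]\in G_{f,K}$ is bounded above by the element $[s_{f,K,\epsilon}(n)]\in S_{f,K}$, so any upper bound of $S_{f,K}$ is also an upper bound of $G_{f,K}$, giving $\sup(G_{f,K})\leq \sup(S_{f,K})$. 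Symmetrically, each $[s_{f,K,\epsilon}(n)]\in S_{f,K}$ is dominated by $[g_{f,K,\epsilon/2}(n)]\in G_{f,K}$, which yields the reverse inequality. Hence $o(f,K)=u(f,K)$ in $\OOR$.

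The statement $o(f)=u(f)$ in the non-compact case then follows immediately by taking the supremum over all compact subsets $K\subset M$, using that the equality $o(f,K)=u(f,K)$ holds for each such $K$. No serious obstacle is anticipated: the argument is essentially the classical Bowen comparison, and the only subtlety is making sure the cofinality argument in the complete lattice $\OOR$ is stated cleanly, which is immediate from the definition of supremum in a partial order.
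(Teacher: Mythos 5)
Your proposal is correct and follows essentially the same route as the paper: the paper isolates exactly the inequalities $g_{f,K,\epsilon}(n)\leq s_{f,K,\epsilon}(n)\leq g_{f,K,\epsilon/2}(n)$ as a lemma (cited to Bowen rather than reproved), passes to classes in $\OR$, and concludes equality of the suprema in $\OOR$ by the same cofinality observation. The only difference is that you supply the standard proof of the lemma, which the paper omits.
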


The proof of this proposition is a consequence of the following lemma:

\begin{lema}\label{LemSepEquGen}
$g_{f,K,\epsilon}(n) \leq s_{f,K,\epsilon}(n)\leq g_{f,K,\epsilon/2}(n)$ for all $n\geq 1$, for all $\epsilon >0$ and for all compact $K\subset M$. 
\end{lema}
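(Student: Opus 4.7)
The plan is to prove the two inequalities separately, by the standard covering/separation comparison arguments familiar from the proof that generating and separating definitions of topological entropy agree.

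For the first inequality $g_{f,K,\epsilon}(n) \leq s_{f,K,\epsilon}(n)$, I would start by picking a maximal $(n,\epsilon)$-separated set $E\subset K$ (which exists and is finite by compactness of $K$ together with the fact that $d_n$ is continuous and any two distinct points of a separated set have $d_n$-distance strictly greater than $\epsilon$). The main claim is that such an $E$ is automatically $(n,\epsilon)$-generating. Given $y\in K$, either $y\in E$, or $E\cup\{y\}$ fails to be $(n,\epsilon)$-separated by maximality; in the latter case, using that $E$ itself is separated, the new violation must involve $y$, which forces $d_n(x,y)\leq \epsilon$ for some $x\in E$, i.e.\ $y\in B(x,n,\epsilon)$. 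Hence $K\subset \bigcup_{x\in E}B(x,n,\epsilon)$, and $g_{f,K,\epsilon}(n)\leq |E|=s_{f,K,\epsilon}(n)$.

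For the second inequality $s_{f,K,\epsilon}(n) \leq g_{f,K,\epsilon/2}(n)$, I would take an $(n,\epsilon)$-separated set $E$ of maximum cardinality and a minimum-cardinality $(n,\epsilon/2)$-generator $F$ of $K$. Define a map $\phi\colon E\to F$ by choosing, for each $x\in E$, some $\phi(x)\in F$ with $x\in B(\phi(x),n,\epsilon/2)$ (such a choice exists because $F$ is generating). The key step is injectivity of $\phi$: if $\phi(x_1)=\phi(x_2)$, then the triangle inequality applied to $d_n$ gives $d_n(x_1,x_2)\leq \epsilon/2+\epsilon/2=\epsilon$, so $x_2\in B(x_1,n,\epsilon)\cap E=\{x_1\}$, forcing $x_1=x_2$. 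Thus $s_{f,K,\epsilon}(n)=|E|\leq |F|=g_{f,K,\epsilon/2}(n)$.

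Neither step is really hard; the only minor care point is the bookkeeping around the definitions (closed ball versus strict inequality in the definition of separated set, and the fact that $d_n$ is a genuine pseudo-metric so that the triangle inequality applies). Once those are in place, the two arguments are symmetric and short, and the proposition $o(f,K)=u(f,K)$ then follows immediately by taking classes in $\OOR$: the lemma gives $[g_{f,K,\epsilon}(n)]\leq [s_{f,K,\epsilon}(n)]\leq [g_{f,K,\epsilon/2}(n)]$, so the families $G_{f,K}$ and $S_{f,K}$ are mutually cofinal in the partial order of $\OOR$, and therefore have the same supremum.
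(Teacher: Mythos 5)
Your proof is correct. The paper does not actually prove this lemma itself---it delegates the proof to Bowen \cite{Bo71}---and your two arguments (a maximal $(n,\epsilon)$-separated set is automatically $(n,\epsilon)$-generating, and the map from a separated set into an $(n,\epsilon/2)$-generator is injective by the triangle inequality for the metric $d_n$) are precisely the standard ones found there, with the bookkeeping correctly matched to the paper's closed-ball conventions for $B(x,n,\epsilon)$.
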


A proof of this lemma can be found \cite{Bo71}.

\begin{proof}[Proof of \ref{SepEquGen}]
By Lemma \ref{LemSepEquGen}, we deduce that $[g_{f,K,\epsilon}(n)] \leq [s_{f,K,\epsilon}(n)]\leq [g_{f,K,\epsilon/2}(n)]$. The first inequality implies that $ u(f,K)\leq o(f,K)$, and the second one implies that $ o(f,K)\leq u(f,K)$. From this we conclude that $o(f,K)=u(f,K)$.
\end{proof}

\subsection{\texorpdfstring{$o(f)$}{o(f)} is a topological invariant - proof of Theorem \ref{Inv}.} \label{TeoInv}

\begin{proof}[ Proof of Theorem \ref{Inv}]
Consider $f:M\to M$ and $g:N\to N$, two continuous maps such there exists $h:M\to N$ a homeomorphism which satisfies $h\circ f = g \circ h$. Given $\epsilon>0$, consider $\delta>0$ from the uniform continuity of $h$. Let $E$ be an $(n,\epsilon)$-separated set of $g$ such that $s_{g,\epsilon}(n) = \#E$. We claim that $h^{-1}(E)$ is an $(n,\delta)$-separated set of $f$. If this was not true, then there exists $x_1,x_2\in h^{-1}(E)$ and $k\leq n$ such that $d(f^k(x_1),f^k(x_2))\leq \delta$. By the continuity of $h$, we know that  $d(h(f^k(x_1)),h(f^k(x_2)))\leq \epsilon$. Using that $h$ conjugates $f$ and $g$ we see that $d(g^k(h(x_1)),g^k(h(x_2)))\leq \epsilon$ which contradicts that $E$ is an $(n,\epsilon)$-separated set of $g$. 

If  $h^{-1}(E)$ is an $(n,\delta)$-separated set of $f$, we infer that $s_{f,\delta}(n) \geq \# h^{-1}(E) = \# E= s_{g,\epsilon}(n)$. In particular, $[s_{f,\delta}(n)]\geq [s_{g,\epsilon}(n)]$ and from this we deduce that $o(f)\geq o(g)$. Since $h$ is a homeomorphism we analogously prove that $o(f)\leq o(g)$ and then we conclude that $o(f)= o(g)$.  
\end{proof}

We would like to point out that this theorem also holds for the non compact case when the conjugacy is uniformly continuous.

\subsection{Relationship between \texorpdfstring{$o(f)$}{o(f)} and \texorpdfstring{$h(f)$}{h(f)} - proof of Theorem \ref{Rel}.} \label{TeoRel}

In order to prove that $\pi_{\E}(o(f))=h(f)$ we would like to do two things: first, recall the definition of $\pi_\E:\OOR\to [0,\infty]$. Once we consider the interval $I_{\E}(o)= \{t\in (0,\infty):o \leq [exp(tn)]\}\subset \R$ we define $\pi_{\E}(o)=\inf (I_\E (o))$ if $I_{\E}(o)\neq \emptyset$ and $\pi_\E(o)=\infty$ otherwise. Second, to point out the following lemma which we are not going to prove. 

\begin{lema}
The following four are equivalent:
\begin{enumerate}
\item $[a_1(n)]\leq [a_2(n)]$ (There exists a constant $c$ such that $a_1(n) \leq C a_2(n)$ for all $n$).
\item $\liminf_n \frac{a_2(n)}{a_1(n)} >0$.
\item $\limsup_n \frac{a_1(n)}{a_2(n)} <\infty$.
\item There exists a constant $c$ and $n_0$ such that $a_1(n) \leq c.a_2(n)$ for all $n\geq n_0$.
\end{enumerate}
\end{lema}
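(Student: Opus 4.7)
My plan is to prove the equivalence via the cyclic chain $(1) \Rightarrow (3) \Rightarrow (4) \Rightarrow (1)$, together with $(2) \Leftrightarrow (3)$ handled separately. All four implications reduce to elementary facts about limits of ratios of non-negative sequences; the monotonicity hypothesis $a_1, a_2 \in \Or$ enters only in the step $(4) \Rightarrow (1)$.

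For $(1) \Rightarrow (3)$, if $a_1(n) \leq C a_2(n)$ for every $n$, then at every index where $a_2(n) > 0$ the ratio $a_1(n)/a_2(n)$ is bounded above by $C$, so $\limsup_n a_1(n)/a_2(n) \leq C < \infty$. For $(3) \Rightarrow (4)$, if $L := \limsup_n a_1(n)/a_2(n) < \infty$, the definition of $\limsup$ gives, for any fixed $\epsilon > 0$, an index $n_0$ such that $a_1(n)/a_2(n) \leq L + \epsilon$ for all $n \geq n_0$; then $c := L + \epsilon$ fulfils (4). The equivalence $(2) \Leftrightarrow (3)$ is the identity $\liminf_n (1/x_n) = 1/\limsup_n x_n$ for positive sequences (with conventions $1/0 = +\infty$ and $1/\infty = 0$), applied to $x_n := a_1(n)/a_2(n)$.

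The direction $(4) \Rightarrow (1)$ is where the non-decreasing hypothesis intervenes. Assuming $a_1(n) \leq c\, a_2(n)$ for all $n \geq n_0$, I would consider the finitely many indices $n < n_0$ with $a_2(n) > 0$: the quantities $a_1(n)/a_2(n)$ form a finite set, so they admit a maximum $C_0 < \infty$, and $C := \max(c, C_0)$ then satisfies the inequality in (1) at every index where $a_2(n) > 0$. Since $a_2$ is non-decreasing, the set of indices where $a_2$ vanishes is an initial segment of $\N$, and on that segment (1) forces $a_1$ to vanish as well.

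I expect this last point to be the principal obstacle, because (4) does not by itself say anything about the initial segment where $a_2 \equiv 0$. The clean way to dispose of it is to use the definition of the equivalence relation on $\Or$ given at the start of the paper: under $\approx$ two sequences with different supports cannot be comparable, so the partial order $[a_1] \leq [a_2]$ already implicitly restricts attention to sequences whose support contains that of $a_1$. In practice the sequences $g_{f,\epsilon}(n)$ that motivate the lemma are eventually strictly positive integers, so the issue disappears and the four conditions become genuinely equivalent.
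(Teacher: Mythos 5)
The paper states this lemma without proof --- the text immediately before it says it is ``not going to prove'' it --- so there is no argument of the authors' to measure yours against; I can only assess your proposal on its own terms, and it is essentially correct. The cycle $(1)\Rightarrow(3)\Rightarrow(4)\Rightarrow(1)$ together with $(2)\Leftrightarrow(3)$ covers all the equivalences, and each step is the standard one. Your closing observation is the one point of real content: as literally stated, $(4)\Rightarrow(1)$ fails for sequences in $\Or$ such as $a_1\equiv 1$ and $a_2=(0,1,1,\dots)$, for which (2), (3) and (4) hold but no constant $C$ gives $a_1(0)\leq C\,a_2(0)$. Because $a_2$ is non-decreasing its zero set is an initial segment of $\N$, so the obstruction is exactly the one you isolate, and it disappears for every sequence the lemma is actually applied to in the paper ($g_{f,\epsilon}(n)\geq 1$ and $s_{f,\epsilon}(n)\geq 1$ since $M\neq\emptyset$); a fully literal version of the lemma would add the hypothesis that $a_2(n)>0$ for all $n$, or that $a_1$ vanishes wherever $a_2$ does. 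Flagging this is a refinement of the statement, not a defect of your proof. One small correction of framing: your remark that ``two sequences with different supports cannot be comparable under $\approx$'' does not really dispose of the issue, since $\leq$ is the relation being characterized rather than assumed; the practical resolution you give right after (the motivating sequences are eventually, indeed everywhere, positive integers) is the correct one.
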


\begin{proof}[Proof of $\pi_{\E}(o(f))=h(f)$] 

Let us suppose that $h(f)<\infty$. If this happens, then 
\[\limsup_{n} \frac{1}{n} log(g_{f,\epsilon}(n)) \leq h(f)\ \forall \epsilon >0.\]
which implies  
\[\limsup_{n} \frac{g_{f,\epsilon}(n)}{exp((h(f)+\delta)n)} < \infty\ \forall \epsilon >0\ \forall \delta >0.\]
This means, that $[g_{f,\epsilon}(n)]  \leq [exp((h(f)+\delta)n)]$ and therefore $ o(f) \leq  [exp((h(f)+\delta)n)]$. In particular, $\pi_\E(o(f))\leq (h(f)+\delta)$ for any $\delta$ and then $\pi_\E(o(f))\leq h(f)$. Moreover, it is also concluded that if $h(f)<\infty$, then $\pi_{\E}(o(f))<\infty$.

Let us suppose that $\pi_{\E}(o(f))<\infty$. Recalling that this means  $I_\E (o(f)) \neq \emptyset$, we take $t_0$ such that $ o(f) \leq [exp(t_0 n)]$. Then, $  [g_{f,\epsilon}(n)]\leq  [exp(t_0 n)]$ for all $\epsilon>0$. This implies that
\[\liminf_n \frac{exp(t_0 n)}{g_{f,\epsilon}(n)} > 0,\]
which is equivalent to
\[\limsup_n \frac{g_{f,\epsilon}(n)}{exp(t_0 n)} < \infty,\]
and therefore
\[\limsup_n \frac{1}{n} log (g_{f,\epsilon}(n)) \leq t_0.\]

Since this holds for all $\epsilon >0$ we infer that $h(f) \leq t_0$ and then $h(f) \leq \inf (I_\E (o(f)))=\pi_{\E}(o(f))$. Moreover, it is also concluded that if  $\pi_{\E}(o(f))<\infty$, then $h(f)<\infty$.

From the previous two arguments we deduce two things. First, that $h(f)<\infty$ if and only if $\pi_{\E}(o(f))<\infty$. And second, if one of those happens, then $h(f)= \pi_\E(o(f))$.

\end{proof}

We would like to observe that Theorem \ref{Rel} also holds when $M$ is not compact. 

From Theorem \ref{Rel}, it remains to see that $o(f)\leq \sup(\E)$. Since we are going to use later the main argument to prove this, we would like to put it separate. This argument will allow us to compute upper bounds for $s_{f,\epsilon}(n)$.

\begin{lema}\label{LemPhiInj}
Let $M$ be a compact metric space and $f:M\to M$ a continuous map. Let us fix $\epsilon>0$ and suppose that $M$ can be covered by  $B_1, \dots, B_k$  balls of radius $\epsilon/2$. Let $E$ be an $(n,\epsilon)$-separated set and $\varphi:E\to \{1,\dots,k\}^n$ a map which associates to each point an itinerary. This means, if $\varphi(x)= (i_0,\dots,i_{n-1})$, then  $f^j(x)\in B_{i_j}$. Then, $\varphi$ is injective.
\end{lema}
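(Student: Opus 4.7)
The plan is to prove injectivity by contradiction: assume two distinct points $x,y \in E$ share the same itinerary, and derive that they are $\epsilon$-close in the $d_n$ metric, contradicting the hypothesis that $E$ is $(n,\epsilon)$-separated.

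First I would take $x,y\in E$ with $\varphi(x)=\varphi(y)=(i_0,\dots,i_{n-1})$. For each $0\leq j\leq n-1$, the points $f^j(x)$ and $f^j(y)$ both lie inside the ball $B_{i_j}$, whose radius is $\epsilon/2$. The triangle inequality, applied through the center of $B_{i_j}$, immediately gives $d(f^j(x),f^j(y))\leq \epsilon$. Taking the supremum over $j$ yields $d_n(x,y)\leq \epsilon$, so that $y\in B(x,n,\epsilon)$. Since $E$ is an $(n,\epsilon)$-separated set, the definition $B(x,n,\epsilon)\cap E=\{x\}$ forces $y=x$.

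This shows that $\varphi$ is injective. There is essentially no substantive obstacle: once one recognizes that the itinerary constrains each iterate to a ball of radius $\epsilon/2$, the triangle inequality does all the work. The only subtlety worth flagging is bookkeeping about the range of the index $j$ — one must make sure that the same indices controlled by the itinerary are exactly the ones appearing in the definition of $d_n$, so that the bound $d(f^j(x),f^j(y))\leq\epsilon$ genuinely produces $d_n(x,y)\leq\epsilon$; under the conventions of the paper, this is automatic.

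The usefulness of this lemma, which I expect to be exploited in the sequel, is the upper bound it yields on the cardinality of any $(n,\epsilon)$-separated set: $s_{f,\epsilon}(n)\leq k^n$, where $k=k(\epsilon)$ is the number of balls of radius $\epsilon/2$ needed to cover $M$. In particular this gives $[s_{f,\epsilon}(n)]\leq [\exp(\log(k)\,n)]$ for each $\epsilon$, which is precisely what is needed to finish Theorem \ref{Rel} by concluding $o(f)\leq \sup(\E)$.
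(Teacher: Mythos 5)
Your proof is correct and is essentially the paper's own argument: the paper likewise argues by contradiction that two points with the same itinerary satisfy $d(f^i(x),f^i(y))<\epsilon$ for the relevant iterates, contradicting $(n,\epsilon)$-separatedness. The triangle-inequality step through the ball centers and the application to $s_{f,\epsilon}(n)\leq k^n$ are exactly how the lemma is stated and used in the paper.
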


\begin{proof}
If not, we would have two points $x,y$ in $E$ such that $d(f^i(x),f^i(y))<\epsilon$ for all $0\leq i\leq n-1$. This contradicts the fact that $E$ is an $(n,\epsilon)$-separated set.
\end{proof}

We will call maps like $\varphi$ itinerary maps.

\begin{proof}[Proof of $o(f)\leq \sup(\E)$]
Let us fix $\epsilon>0$ and consider $B_1, \dots, B_k$  balls of radius $\epsilon/2$ which covers $M$. Take $E$ an $(n,\epsilon)$-separated set with $\#E = s_{f,\epsilon}(n)$ and $\varphi:E\to \{1,\dots,k\}^n$ an itinerary map as in lemma \ref{LemPhiInj}. We know by this lemma that $\varphi$ is injective and therefore $s_{f,\epsilon}(n)\leq k^n$. Since $k$ depends on $\epsilon$ and $k(\epsilon)\to \infty$ as $\epsilon\to 0$ we conclude that
\[o(f)=\sup\{[s_{f,\epsilon}(n)]:\epsilon>0\} \leq \sup\{[k(\epsilon)^n]:\epsilon>0\} = \sup(\E).\]
\end{proof}

\section{Maps with vanishing entropy.}\label{VE}

In this section, we prove first Theorem \ref{Ent0}. Then, we study the generalized entropy of homeomorphisms of the circle, which means proving  Theorem \ref{ClaHS1}. Finally, we are going to construct and explain examples \ref{ExBE} and \ref{ExSPA}.

\subsection{Lyapunov stable maps - Proof of Theorem \ref{Ent0}.}\label{TeoEnt0}

\begin{proof}[Proof of Theorem \ref{Ent0}] We are going to prove first that $o(f)=0$ if and only if $f$ is Lyapunov Stable. 

$(\Longrightarrow):$
By the definition of Lyapunov stability, given $\epsilon>0$, there exists $\delta$ such that if $d(x,y)<\delta$, then $d(f^n(x),f^n(y))<\epsilon$ for all $n\in \N$. In particular, $B(x,\delta)\subset B(x,n,\epsilon)$. Since $M$ is compact there exists $x_1,\dots,x_k$ points in $M$ such that $\{B(x_i,\delta):i=1,\dots,k\}$ is a covering of $M$. By the previous, we know that $\{B(x_i,n,\epsilon):i=1,\dots,k\}$ is a covering of $M$ and therefore $g_{f,\epsilon}(n)\leq k$. This implies that $[g_{f,\epsilon}(n)] = 0$ and therefore $o(f)=0$.

$(\Longleftarrow):$
Suppose that $o(f)=0$, and observe that given $\epsilon>0$ we conclude that $[s_{f,\epsilon}(n)] \leq o(f)=0$, and therefore $[s_{f,\epsilon}(n)] =0$. 
This implies that $s_{f,\epsilon}$ is a bounded sequence.

Since $s_{f,\epsilon}$ is a bounded non-decreasing sequence it is eventually constant. Let us say $s_{f,\epsilon}(n) = s_{f,\epsilon}(n_0)$ for all $n\geq n_0$.  If a set $E$ is $(n,\epsilon)$-separated, then it is $(m,\epsilon)$-separated for all $m>n$. From this, we see that we can take $E$, an $(n,\epsilon)$-separated set such that $s_{f,\epsilon}(n)=\#E$ for all $n\geq n_0$.  

Recall that if an $(n,\epsilon)$-separated set is such that  its cardinal is $s_{f,\epsilon}(n)$, then  it also is an $(n,\epsilon)$-generator. In particular, we know that for every $n\in \N$ and $x\in M$ there exists $y_n\in E$ such that $d_n(x,y_n)<\epsilon$. Since $E$ is finite and $d_m(x,y)\geq d_n(x,y)$ if $m>n$, then we can deduce that for every $x\in M$ there exists $y\in E$ such that $d_n(x,y)< \epsilon$ for all $n\in \N$.

To simplify the notation, we will call $d_\infty(x,y)= \sup\{d_n(x,y):n\in\N\}$.

Let us prove the result. Suppose by contradiction that there exists $\eta>0$ such that for every $m$ there exists $x_m,y_m$ verifying $d(x_m,y_m)<1/m$ and $d_\infty(x_m,y_m) >\eta$. Taking a sub-sequence if necessary, we can consider that there exists $z\in M$ and $x,y\in E$ such that $x_m\to z$, $y_m\to z$, $d_\infty(x_m,x)\leq \epsilon$ and $d_\infty(y_m,y) \leq \epsilon$. Since each $d_n$ is  continuous we conclude that $d_\infty(z,x)\leq \epsilon$ and $d_\infty(z,y)\leq \epsilon$. Then, we infer that
\[d_\infty(x_m,y_m)\leq d_\infty(x_m,x) + d_\infty(x,z) + d_\infty(z,y) + d_\infty(y,y_m) \leq 4\epsilon,\]
and if we take $\epsilon < \eta/4$ we have a contradiction.

Let us show now that if there exists $x\in M$ such that $x\notin \alpha(x)$, then $o(f) \geq [n]$. If $x\notin \alpha(x)$, then there exists $\epsilon>0$ such that $d(x,f^{-n}(x))\geq \epsilon$ for all $n\geq 1$. Given $n\in \N$, we claim that $\{f^{-i}(x):0\leq i <n\}$ is an $(n,\epsilon)$-separated set. From this, $s_{f,\epsilon}(n)\geq n$ and then  $o(f)\geq [s_{f,\epsilon}(n)]\geq [n]$ which implies the result. To prove the claim, observe that given $0\leq i<j\leq n-1$, $d_n(f^{-i}(x),f^{-j}(x))\geq d(f^i(f^{-i}(x)),f^i(f^{-j}(x)))=d(x,f^{i-j}(x))>\epsilon$.
\end{proof}

\subsection{Homeomorphisms of the Circle - Proof of Theorem \ref{ClaHS1}} \label{SClaHS1}

We are going to split the proof of Theorem \ref{ClaHS1} in two lemmas. 

\begin{lema}\label{GTEHS1}
Let $f:S^1\to S^1$ be a homeomorphism. If $f$ is not Lyapunov stable, then $o(f)=[n]$. 
\end{lema}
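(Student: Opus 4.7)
The plan is to prove both $o(f)\leq [n]$ and $o(f)\geq [n]$. The upper bound will hold for every circle homeomorphism; the lower bound is where the failure of Lyapunov stability enters, via the second half of Theorem \ref{Ent0}.

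For the upper bound, I fix $\epsilon>0$ and let $E=\{x_0,x_1,\dots,x_{k-1}\}$ be an $(n,\epsilon)$-separated set, written in cyclic order on $S^1$. For each $i$ choose $j_i\in\{0,1,\dots,n\}$ with $d(f^{j_i}(x_i),f^{j_i}(x_{i+1}))>\epsilon$. The crucial point is that since $f$ is a homeomorphism of $S^1$, every iterate $f^j$ carries the cyclic partition $\{[x_i,x_{i+1}]\}_i$ of $S^1$ into another partition $\{f^j([x_i,x_{i+1}])\}_i$ whose arc lengths still sum to the length of $S^1$. For each fixed $j$, those arcs indexed by $\{i:j_i=j\}$ have arc length exceeding $\epsilon$, so their number is at most $1/\epsilon$ (after normalizing $S^1$ to unit length). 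Summing over $j=0,1,\dots,n$ gives $k\leq (n+1)/\epsilon$, whence $[s_{f,\epsilon}(n)]\leq [n]$ for every $\epsilon>0$ and thus $o(f)\leq [n]$.

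For the lower bound, by Theorem \ref{Ent0} it suffices to exhibit a point $x\in S^1$ with $x\notin\alpha(x)$. I case-analyze on the existence of periodic points. Suppose first that $f$ has a periodic orbit of period $q$, so $\mathrm{Fix}(f^q)\neq\emptyset$. One cannot have $f^q=\mathrm{id}$, since then the uniform continuity of $f,f^2,\dots,f^{q-1}$ together with $f^q=\mathrm{id}$ would force $f$ to be Lyapunov stable. Hence $\mathrm{Fix}(f^q)\subsetneq S^1$ and some complementary arc $I$ is nonempty, on which $f^q$ is a monotone homeomorphism with no fixed points, so every $f^q$-orbit in $I$ accumulates on one endpoint in backward time and on the other in forward time. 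Choosing any $x\in\mathrm{int}(I)$, $\alpha_f(x)$ is contained in the $f$-orbit of one endpoint of $I$ (a finite subset of $\mathrm{Fix}(f^q)\subset S^1\setminus\mathrm{int}(I)$), so $x\notin\alpha(x)$.

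If instead $f$ has no periodic points, then $f$ is orientation-preserving (any orientation-reversing circle homeomorphism has fixed points) with irrational rotation number. Either $f$ is topologically conjugate to the corresponding rigid rotation and hence Lyapunov stable---contradicting the hypothesis---or $f$ is a Denjoy homeomorphism with a wandering interval $J$; in that case, for any $x\in\mathrm{int}(J)$ the backward orbit of $x$ never returns to $J$, so $\alpha(x)\subset S^1\setminus\mathrm{int}(J)$ and again $x\notin\alpha(x)$. The main obstacle is checking that the partition-into-arcs property used in the upper bound survives iteration equally well for orientation-reversing maps (it does, since every $f^j$ is still a homeomorphism); the lower bound then reduces to the standard classification of circle homeomorphisms and an application of Theorem \ref{Ent0}.
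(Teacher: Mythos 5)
Your upper bound is correct and takes a genuinely different route from the paper: the paper fixes a cover of $S^1$ by $k$ intervals of length $\epsilon$ and bounds the number of admissible itineraries of an $(n,\epsilon)$-separated set by $4kn$, using that each nonempty cylinder $\bigcap_{j}f^{-j}(I_{i_j})$ is an interval whose endpoints are among the endpoints of the $kn$ intervals $f^{-j}(I_i)$; you instead count, for each time $j\le n$, how many arcs of the cyclic partition determined by the separated set itself can have $f^j$-image of length greater than $\epsilon$. Both give $s_{f,\epsilon}(n)=O(n)$, and your count is self-contained (it does not route through Lemma \ref{LemPhiInj}). Your lower bound follows the same strategy as the paper --- produce $x\notin\alpha(x)$ and invoke the second half of Theorem \ref{Ent0} --- but with the cases organized by existence of periodic points rather than by orientation and rotation number.

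There is, however, a genuine gap in your periodic case. The step ``$f^q$ is a monotone homeomorphism of $I$ with no fixed points, so every $f^q$-orbit in $I$ accumulates on one endpoint in backward time and on the other in forward time'' presupposes $f^q(I)=I$. This holds when $f^q$ preserves orientation (it fixes $\partial I$ and preserves orientation, hence preserves each complementary component of $\mathrm{Fix}(f^q)$), but it fails when $f^q$ reverses orientation: an orientation-reversing circle homeomorphism has exactly two fixed points and swaps the two complementary arcs. This case is forced on you, since an orientation-reversing $f$ always has fixed points and your natural choice is then $q=1$. In that situation a point of $\mathrm{int}(I)$ can perfectly well be periodic of period $2$ without lying in $\mathrm{Fix}(f)$ (take $f$ orientation-reversing with $f^2$ having an attracting/repelling pair inside one of the two arcs and wandering points elsewhere), so the assertion that \emph{any} $x\in\mathrm{int}(I)$ satisfies $x\notin\alpha(x)$ is false as written. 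The repair is easy --- pass to $2q$, i.e.\ work with the orientation-preserving map $f^{2q}$ and a complementary arc of $\mathrm{Fix}(f^{2q})$, then note that $\alpha_f(x)$ is the union of the $\alpha_{f^{2q}}$-limit sets of $x,f^{-1}(x),\dots,f^{-(2q-1)}(x)$, all of which lie in $\mathrm{Fix}(f^{2q})$ and hence avoid $x$ --- but some such adjustment is needed. The paper sidesteps this by treating orientation-reversing maps separately: either all points off the two fixed points have period two (whence Lyapunov stability) or there are wandering points, to which the second part of Theorem \ref{Ent0} applies directly.
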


\begin{proof}

Let us start by proving that $o(f)\leq [n]$. The argument is the same used for proving that if $f$ is a homeomorphism of the circle, then $h(f)=0$. Let us fix $\epsilon>0$ and consider a finite covering of $S^1$ by intervals of length $\epsilon$. Suppose that $I_1, \dots, I_k$ are such intervals and let $E$ be an $(n,\epsilon)$-separated set with $\#E = s_{f,\epsilon}(n)$. Again, we consider an itinerary map $\varphi:E\to \{1,\dots,k\}^n$. We know by Lemma \ref{LemPhiInj} that $\varphi$ is injective. The difference here with respect to the second part of Theorem \ref{Rel}, is that we can prove $\# \varphi(E)\leq 4k.n$. Let us consider an admissible itinerary $(i_1,\dots,i_n)$. In particular, $\bigcap_{j=0}^{n-1}f^{-j}(I_{i_j})\neq \emptyset$ and therefore it is an interval with two endpoints. Observe also, that each endpoint is an endpoint of a $f^{-j}(I_{i_j})$. Since we have $k.n$ intervals $f^{-j}(I_{i})$ with $0\leq j\leq n-1$ and $1\leq i\leq k$, we know that $\# \varphi(E)\leq 4k.n$. Therefore $ s_{f,\epsilon}(n)\leq 4kn$, which implies $[s_{f,\epsilon}(n)]\leq [n]$, and then $o(f) \leq [n]$.

We need now to prove $o(f)\geq [n]$ and for this we are going to use the second part of Theorem \ref{Ent0}. Let us consider first the case when $f$ reverses orientation. If  $f$ reverses orientation, then $f$ has two fixed points. Now we have two possibilities for the remaining points: they are all periodic of period two or there are wandering points. For the first case, $f$ is Lyapunov stable and for the second, by Theorem \ref{Ent0} we deduce that $o(f)\geq [n]$.

Let us study the case when $f$ preserves orientation. In this case, we have well defined the rotation number $\rho(f)$. If $\rho(f) =p/q\in \Q$, we know that: $f$ has periodic points; they all have period $q$; and the no wandering set of $f$ consists only of these periodic points. We now have two possibilities. If $\Omega(f)=S^1$, then $f$ is Lyapunov stable. If $\Omega(f)\neq S^1$, we have wandering points and again by the second part of Theorem  \ref{Ent0} we conclude.

 If $\rho(f) \notin \Q$, we know that $f$ is semi-conjugate to an irrational rotation. Since the rotation is Lyapunov stable, if $f$ is in fact conjugate, then $f$ is also Lyapunov stable. If not, $f$ has wandering points and analogously by the second part of Theorem \ref{Ent0} we have finished.
\end{proof}

From the previous lemma, we could not separate a Denjoy Map (when $f$ is only semi-conjugate to the irrational rotation) from a Morse-Smale (the no-wandering set only consists of a finite number of hyperbolic periodic points). For this we have the following result:

\begin{lema}\label{GTEDM}
Let $f:S^1\to S^1$ be a homeomorphism. If $f$ is a Denjoy Map, then $o(f,\Omega(f))=[n]$. 
\end{lema}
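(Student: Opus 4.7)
The plan is to sandwich $o(f,\Omega(f))$ between two copies of $[n]$. For the upper bound, any $(n,\epsilon)$-separated subset of $\Omega(f)$ is automatically $(n,\epsilon)$-separated for $f$ on $S^1$, so $s_{f,\Omega(f),\epsilon}(n)\leq s_{f,\epsilon}(n)$ and hence $o(f,\Omega(f))\leq o(f)$. A Denjoy map has wandering intervals and is therefore not Lyapunov stable, so Lemma~\ref{GTEHS1} gives $o(f)=[n]$, yielding $o(f,\Omega(f))\leq[n]$.

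The lower bound is the heart of the argument, and my plan is to obtain it from a $2$-letter symbolic coding of $f|_{\Omega(f)}$ combined with the Morse--Hedlund theorem. First I would pick $\epsilon>0$ small enough that there are two distinct wandering intervals $I,J$ satisfying $2\epsilon<|I|,|J|<1/2$; such $\epsilon$ exists because the lengths of the wandering intervals form a summable sequence tending to $0$, so the number of intervals exceeding any threshold is finite but grows without bound as the threshold decreases, while at most one wandering interval can exceed half the circumference. Removing $I$ and $J$ from $S^1$ produces two closed arcs $C_1,C_2$; set $P_i=C_i\cap\Omega(f)$. The closest points of $P_1$ and $P_2$ sit across $I$ or across $J$, so $d(P_1,P_2)=\min(|I|,|J|)>2\epsilon$. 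Associating to each $x\in\Omega(f)$ its length-$n$ itinerary $\iota(x)\in\{1,2\}^n$ given by $f^k(x)\in P_{\iota_k(x)}$ (using any fixed convention at the four boundary points of $I\cup J$), two points of $\Omega(f)$ with distinct itineraries must have their orbits lying in different $P_i$ at some time $k\in[0,n-1]$, hence be at distance $>2\epsilon>\epsilon$ at that moment.

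Selecting one representative per realized length-$n$ itinerary produces an $(n,\epsilon)$-separated subset of $\Omega(f)$, so the lower bound reduces to showing that the number of realized itineraries grows at least linearly in $n$. The set of itineraries generates a subshift of $\{1,2\}^\Z$ that is a topological factor of $(\Omega(f),f|_{\Omega(f)})$; this factor inherits minimality, and it is infinite because the rotation number of $f$ is irrational and the rotation $R_\alpha$ admits no non-trivial finite factor. The Morse--Hedlund theorem then forces complexity $p(n)\geq n+1$ for every $n$, so $s_{f,\Omega(f),\epsilon}(n)\geq n+1$ and $o(f,\Omega(f))\geq[n]$. The two most delicate steps are producing $I$ and $J$ satisfying the quantitative inequality $d(P_1,P_2)>2\epsilon$, and verifying that the induced $2$-letter subshift is genuinely infinite; both rely on the specific quantitative structure of Denjoy dynamics, while the remainder is standard symbolic dynamics.
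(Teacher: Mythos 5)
Your proposal is correct, but the lower bound is obtained by a genuinely different route from the paper's. The paper works directly with the wandering intervals: fixing $\epsilon$ so that at least one gap has length greater than $\epsilon$, it lets $A_1$ be the (finite) collection of all such gaps, iterates $A_{n+1}=f^{-1}(A_n)\cup A_1$, notes that $\#A_n\geq n$ eventually because the gaps are wandering (each pull-back step contributes at least one new interval), and observes that the two endpoints of any interval of $A_n$ lie in $\Omega(f)$ with $d_n(x,y)>\epsilon$; picking one boundary point per complementary component gives an $(n,\epsilon)$-separated subset of $\Omega(f)$ of cardinality $\#A_n$. You instead code $f|_{\Omega(f)}$ by a two-letter itinerary relative to the partition $\{P_1,P_2\}$ and invoke Morse--Hedlund. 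Your route is sound: $P_1,P_2$ are clopen in $\Omega(f)$ because their frontier points in $S^1$ are gap endpoints, so the coding is a continuous, shift-commuting map onto a minimal subshift; that subshift is not a fixed point since both $P_i$ meet $\Omega(f)$, and it cannot be a periodic orbit of period $m\geq 2$ because that would be a nontrivial finite (hence equicontinuous) factor of the Denjoy minimal set, which is ruled out since its maximal equicontinuous factor is the irrational rotation $R_\alpha$ and $e^{2\pi i n\alpha}$ is never a nontrivial root of unity. The paper's argument buys elementarity --- nothing beyond the definition of a wandering interval is used; yours buys a conceptual explanation (linear growth is exactly the Morse--Hedlund bound for an infinite minimal coding) at the price of having to justify the absence of nontrivial finite factors of $(\Omega(f),f|_{\Omega(f)})$, which is the one step you should expand beyond the single clause you devote to it.
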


\begin{proof}
Since $o(f,\Omega(f))\leq o(f)\leq [n]$, it only remains to prove that $o(f,\Omega(f))$ $\geq [n]$. 

We will call wandering intervals to the connected components of $S^1\setminus\Omega(f)$. Let us consider $\epsilon>0$ such there exists some wandering interval of length greater than $\epsilon$. We define now $A_1=\{I_1,\dots, I_k\}$ the collection of all the wandering intervals of length greater than $\epsilon$. We know that this is finite set because $S^1$ has finite length. We proceed to define by induction the sets  $A_{n+1}= f^{-1}(A_n)\cup A_1$. Observe that for $n$ big enough,  $\#A_n\geq n$. This is true because the intervals are wandering and therefore at each step we have to add at least one new interval. To prove the result, let us observe that the two points $x,y$ in the border of an interval of $A_n$ belong to $\Omega(f)$ and $d_n(x,y)=\sup\{d(f^i(x),f^i(y)):\ 0\leq i< n\}>\epsilon$. Now, if we take for each connected component of $S^1\setminus \cup_{I\in A_n} I $ one point in the border, then by the previous argument we obtain an $(n,\epsilon)$-separated set. This set has $\#A_n$ points and therefore $s_{f,\Omega(f),\epsilon}(n)\geq \#A_n$ which implies $[n]\leq [s_{f,\Omega(f),\epsilon}(n)]\leq o(f,\Omega(f))$. 
\end{proof}

Let us now prove Theorem \ref{ClaHS1}

\begin{proof}[Proof of Theorem \ref{ClaHS1}] 

For each homeomorphism of the circle $f$ we associate the tuple $(o(f,\Omega(f)),$ $o(f))$. For the Denjoy maps, we obtain $([n],[n])$ and for Lyapunov stable maps, we obtain $(0,0)$. For the rest, since $\Omega(f)=Per(f)$ we obtain $(0,[n])$. In particular, by the criteria defined in the introduction we infer that Denjoy maps are more dispersive than Morse-Smale maps which are more dispersive than rotations. 
\end{proof}

\subsection{Example \ref{ExBE} - A map with different entropy numbers.}\label{SExBE}

This example is inspired by \cite{HaLR18} and Bowen's eye map. 

Consider $f:\D^2\to \D^2$ the time $1$ map of a flow as in figure \ref{FigFlow}.

\begin{figure}[!htp]

\centering
\includegraphics[scale=0.6]{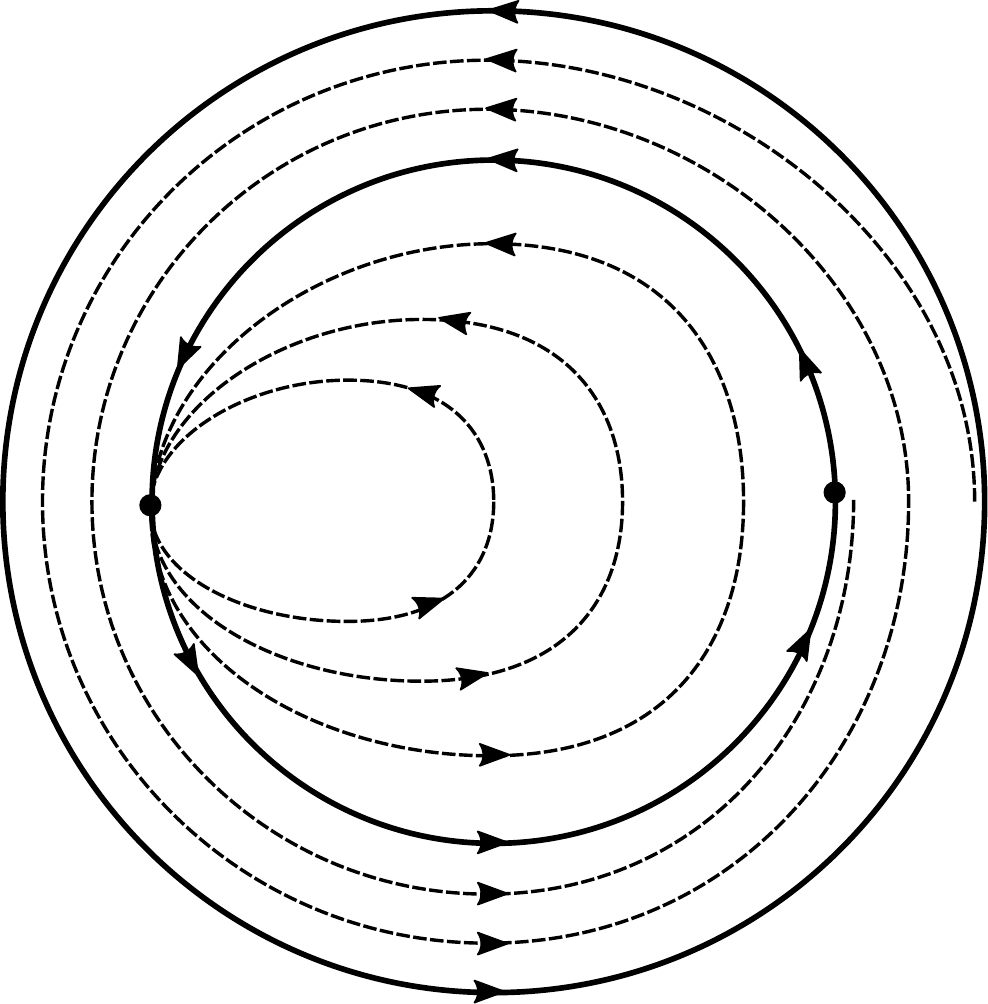}
\caption{Flow for example \ref{ExBE}}
\label{FigFlow}
\end{figure}

Observe that in this flow there are two invariant regions, the inner disk and the outer ring. We will call $D$ the inner disk and $C$ the border of said disk. The purpose of the outer ring is to make the inner circle part of the no-wandering set. In particular, in $C$ there are two singularities, which induces two parabolic fixed points $p_1$ and $p_2$.  Its not hard to see, that $\overline{Rec(f)}=\{p_1,p_2\}$ and $\Omega(f) =C \cup \partial \D^2$. From this, we conclude that $o(f, \overline{Rec(f)})= 0$ and $o(f,\Omega(f))=[n]$. 

It remains then to prove that $o(f)\geq [n^2]$. In order to do so, we are going to use the technique developed in \cite{HaLR18}. We are not applying directly their theory because the setting is different, but the main argument still holds. 

Let us consider two open sets $U_0$ and $U_1$ inside $D$ such that:
\begin{itemize}
\item $U_0$ and $U_1$ are wandering sets.
\item $U_0$ is in the lower half of the disk $D$.
\item $U_1$ is in the upper half of the disk $D$.
\item $\partial U_0 \cap C \neq \emptyset$ and  $\partial U_1 \cap C \neq \emptyset$.
\end{itemize}  

Given $\epsilon>0$ we consider $V_0=\{x\in U_0:d(x, D \setminus U_0)>\epsilon \}$ and $V_1=\{x\in U_1:d(x,D\setminus U_1)>\epsilon \}$. If $\epsilon$ is small enough, then $V_0$ and $V_1$ are not empty and moreover $\partial V_i\cap C \neq \emptyset$ for $i=0,1$. 

We would like to code the  orbits in $int(D)$. For this, we define $R= D\setminus (V_0\cup V_1)$. Now, we fix $n$ and consider the  itinerary map $\varphi_n:int(D)\to \{V_0,V_1,R\}^{n+1}$. 

We claim to have the following property:\emph{ There exists $k_0>0$ such that for all $n$, for all $k_0 \leq l\leq n$ and for all $0\leq i\leq n-l$ there exists $x\in D$ with $\varphi_n(x)=(w_0,\dots, w_n)$ verifying $w_i = V_0$, $w_{i+l}= V_1$ and $w_j= R$ for all $j\neq i, i+l$.} 

The reason for this claim to be true, is that the speed of the flow near the singularity becomes arbitrarily close to $0$ and we have points in $V_0$ and $V_1$ as near as $C$ as we want. 

For each pair  $(i,l)$ we consider the point $x_{i,l}$ as in the claim. If we define $S_n=\{x_{i,l}:k_0\leq l \leq n, 0\leq i\leq n- l \}$, then
 $S_n$ is an $(n,\epsilon)$-separated set. To see this, let us consider $x_{i,l},x_{i',l'}\in S$ with $\varphi_n(x_{i,l})=(w_0,\dots, w_n)$ and $\varphi_n(x_{i',l'})=(w'_0,\dots, w'_n)$. Suppose that $i\neq i'$ (the other case is analogous) and observe that $w_i = V_0$, $w'_i=R$ and $w'_{i'}= V_0$. Since $U_0$ is a wandering set and $w'_{i'}= V_0$, we infer that $f^i(x_{i',l'})\notin U_0$ and therefore   $x_{i,l}$ and $x_{i',l'}$ are $(n,\epsilon)$-separated. 

By a simple computation we see that $[\#S_n] = [n^2]$ and therefore $[n^2]\leq [s_{f,\epsilon}(n)]\leq o(f)$.

\subsection{Example \ref{ExSPA} - Generalized entropy of some twist maps.}\label{SExSPA}

We are going study the generalized topological entropy of some  of homeomorphisms of the annulus $\A=S^1\times [0,1]$. In particular, those twist-maps which leaves the circles $S^1\times \{t\}$ invariant for all $t\in[0,1]$. 

To simplify the computation we are going to use in $\A$ the metric \[d((s_1,t_1),(s_2,t_2))=\max\{d(s_1,s_2),|t_1-t_2|\}.\]

The first thing we are going to do, is lift $f$. Let us consider $\pi:\R\times[0,1]\to \A$ the natural projection, $\alpha:[0,1]\to [0,1]$ a continuous increasing map and $f:\A\to \A$ defined by $f(s,t)=(R_{\alpha(t)}(s),t)$. The map $F:\R\times[0,1]\to \R\times[0,1]$ such that $F(s,t)=(s+\alpha(t),t)$ is a lift of $f$ which satisfies $f\circ \pi = \pi \circ F$. Observe that if $D=[0,1]\times [0,1]\subset \R\times [0,1]$, then $o(F,D)=o(f)$. This is true because the entropy is locally computed, $\pi$ is a local isometry and $F$ and $f$ are conjugated. Moreover, due to the fact that $F$ commutes with the action of the fundamental group of $\A$, we deduce that $o(F)=o(f)$. 

The reason we are going through this is the following: Given two points $x,y\in \A$, in order to know if they are $(n,\epsilon)$-separated we need to know all the values $d(x,y),$ $d(f(x),f(y)),$ $\dots,\ d(f^n(x),f^n(y))$. Now, if $f$ is a twist map, any curve which is transverse to the horizontal direction in every point, is stretched in every iterate of $f$. This implies that given any two close points 
$x=(s_1,t_1), y=(s_2,t_2)\in \R\times[0,1]$  with $t_1\neq t_2$, if $d(F^k(x),F^k(y))>\epsilon$, then $d(F^n(x),F^n(y))>\epsilon$ for all $n>k$. 
Although, this might not happen to $f$, the $(n,\epsilon)$-balls are isometric by $\pi$ and therefore this does not contradict the claim  $o(F,D)=o(f)$. In particular, the previous implies that we only need  to consider the value of $d(F^n(x),F^n(y))$ to know if two close points,  that do not belong to the same horizontal line, are $(n,\epsilon)$-separated.

We say that $\beta:[a,b]\to \R\times[0,1]$ is a vertical curve if $[a,b]\subset [0,1]$ and $\beta(t)=(s_0,t)$ for a fixed $s_0\in \R$. 

\begin{lema}\label{LemGenIso}
Suppose that $F(s,t)=(s+\alpha(t),t)$ with $\alpha:[0,1]\to \R$ an increasing map. Given $\epsilon>0$, consider $0\leq s_1<\dots<s_l\leq 1$  such that $s_{i+1}-s_i \leq \epsilon/2$ and also  $\beta_1,\dots,\beta_l:[a,b]\to \R\times [0,1]$ the vertical curves associated to $\{s_1,\dots,s_l\}$. Then, there exists $G_\epsilon(n)$ an $(n,\epsilon)$-generator of $[0,1]\times [a,b]$ such that 
\[\#G_\epsilon(n)= \left\lceil \frac{2.l.n.(\alpha(b)-\alpha(a))}{\epsilon}\right\rceil.\]
\end{lema}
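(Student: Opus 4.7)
The plan is to exploit the explicit shear form $F^i(s,t) = (s + i\alpha(t), t)$. Two consequences are crucial: the $t$-coordinate is preserved by every iterate, and two points on a common horizontal $\{t = \text{const}\}$ keep their mutual distance forever. The hypothesis that $\{s_1, \dots, s_l\}$ is $\epsilon/2$-dense in $[0,1]$ therefore gives, for every $(s,t) \in [0,1] \times [a,b]$ and the nearest $s_j$, the uniform estimate $d_n((s,t),(s_j, t)) = |s - s_j| \leq \epsilon/2$ for every $n$. It then suffices to cover each vertical curve $\beta_j$ by dynamical $(n,\epsilon/2)$-balls centred on points of $\beta_j$, and to combine via the triangle inequality.

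For the vertical cover I would use that $\alpha$ is continuous and increasing to partition $[\alpha(a),\alpha(b)]$ into $m := \lceil 2n(\alpha(b)-\alpha(a))/\epsilon\rceil$ sub-intervals of length at most $\epsilon/(2n)$, and pull this partition back to $a = t_0 \leq t_1 \leq \dots \leq t_m = b$ with $\alpha(t_{k+1}) - \alpha(t_k) \leq \epsilon/(2n)$. Set
\[
G_\epsilon(n) := \{(s_j, t_k) : 1 \leq j \leq l,\ 0 \leq k < m\},
\]
whose cardinality agrees with the stated value $\lceil 2ln(\alpha(b)-\alpha(a))/\epsilon \rceil$ (after a minor adjustment reconciling $l\lceil\cdot\rceil$ with $\lceil l\cdot\rceil$). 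To check the generator property, take $(s,t)\in [0,1]\times [a,b]$, choose $j$ with $|s-s_j|\leq \epsilon/2$ and $k$ with $t\in [t_k,t_{k+1})$, so $|\alpha(t)-\alpha(t_k)|\leq \epsilon/(2n)$. Then
\[
F^i((s,t)) - F^i((s_j,t_k)) = \bigl(s - s_j + i(\alpha(t)-\alpha(t_k)),\ t - t_k\bigr),
\]
and the horizontal component is controlled by $\tfrac{\epsilon}{2} + n\cdot\tfrac{\epsilon}{2n} = \epsilon$ for every $0\leq i\leq n$.

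The hard part will be the vertical component $|t - t_k|$ appearing in $d_n$: the $\alpha$-mesh condition does not by itself yield $t_{k+1}-t_k\leq \epsilon$, since $\alpha$ could be nearly constant on a long sub-interval of $[a,b]$. My plan is to use, as the effective mesh, the minimum of the $\alpha$-condition $\epsilon/(2n)$ and an $\epsilon$-fine refinement in $t$; once $n$ exceeds a constant depending only on $\alpha$, $a$, $b$, the $\alpha$-condition already dominates and no correction is needed, while for the remaining (bounded) range of $n$ the correction adds at most an additive constant, which is absorbed into the ceiling and is harmless for the subsequent asymptotic application to bounding $o(f)$.
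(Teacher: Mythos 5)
Your proposal is correct and follows essentially the same route as the paper: a product grid consisting of the given vertical curves $\beta_1,\dots,\beta_l$ sampled at a partition $a=t_0\leq\dots\leq t_m=b$ whose mesh in $\alpha$-values is $\epsilon/(2n)$, with the generator property checked via $F^i(s,t)-F^i(\hat s,\hat t)=(s-\hat s+i(\alpha(t)-\alpha(\hat t)),\,t-\hat t)$ and the bound $\epsilon/2+n\cdot\epsilon/(2n)\leq\epsilon$ on the horizontal component. The one point where you go beyond the paper is the vertical component $|t-t_k|$, which the paper's proof passes over silently even though the $\alpha$-mesh condition alone does not force $t_{k+1}-t_k\leq\epsilon$ when $\alpha$ is flat; your fix (intersecting with an $\epsilon$-fine partition in $t$, costing only an $n$-independent additive constant that is irrelevant for the order of growth $[n]$) is sound and worth making explicit.
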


\begin{proof}
Observe that $d(F^n(s,t),F^n(\hat s, \hat t))= \max\{|n(\alpha(\hat t)-\alpha(t)) + \hat s -  s|,|\hat t -  t|\}$. If we consider $ t_1=a<t_2<\dots <t_q=b$ such that 
\begin{equation}\label{eqGen}
|n(\alpha(t_{i+1})-\alpha(t_{i}))|\leq \epsilon/2,
\end{equation}
then $G_\epsilon(n)= \{\beta_i(t_j):1\leq i \leq l, 1\leq j \leq q\}$ is an $(n,\epsilon)$-generator of $[0,1]\times[a,b]$.

 Since $\#G_\epsilon(n)=l.q$ we just need to compute $q$. For this, we add on $i$ in equation \ref{eqGen} and infer that $n(\alpha(b)-\alpha(a))\leq q .\epsilon/2$. Since $\alpha$ is continuous we can take such $t_i$ verifying $q=\left\lceil 2.n.(\alpha(b)-\alpha(a))/\epsilon \right\rceil,$ and from this, we have finished. 

\end{proof}

We also have: 

\begin{lema}\label{LemSepIso}
Suppose that $F(s,t)=(s+\alpha(t),t)$ with $\alpha:[0,1]\to \R$ an increasing map. Given $\epsilon>0$, consider $0\leq s_1<\dots<s_l\leq 1$  such that $s_{i+1}-s_i > \epsilon$ and also  $\beta_1,\dots,\beta_l:[a,b]\to \R\times [0,1]$ the vertical curves associated to $\{s_1,\dots,s_l\}$. Then, there exists $S_\epsilon(n)$ an $(n,\epsilon)$-separated set of $[0,1]\times [a,b]$ such that 
\[\#S_\epsilon(n)= \left\lfloor  \frac{l.n.(\alpha(b)-\alpha(a))}{\epsilon}\right\rfloor.\]
\end{lema}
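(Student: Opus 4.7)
The plan is to exploit exactly the distance formula
\[
d\bigl(F^{k}(s,t),F^{k}(\hat s,\hat t)\bigr)=\max\bigl\{|k(\alpha(\hat t)-\alpha(t))+\hat s-s|,\,|\hat t-t|\bigr\},
\]
already used in the proof of Lemma \ref{LemGenIso}, but read it now in reverse: instead of forcing this quantity to stay small we force it to exceed $\epsilon$ at some iterate. Two observations will drive the construction. First, if two points lie on \emph{distinct} vertical curves $\beta_{i}$ and $\beta_{i'}$ (so the $s$-coordinates are $s_{i}\neq s_{i'}$), the hypothesis $s_{i+1}-s_{i}>\epsilon$ gives $|s_{i}-s_{i'}|>\epsilon$, so evaluating the formula at $k=0$ already shows they are $(n,\epsilon)$-separated. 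Second, if both points lie on the \emph{same} curve $\beta_{i}$ with heights $t<\hat t$, then the horizontal coordinate at iterate $k$ is $k(\alpha(\hat t)-\alpha(t))$, monotone in $k$, so separation is equivalent to $n(\alpha(\hat t)-\alpha(t))>\epsilon$.

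Thus the task reduces to choosing a sequence $t_{1}<t_{2}<\cdots<t_{N}$ in $[a,b]$ together with an assignment $k\mapsto i_{k}\in\{1,\dots,l\}$ such that whenever $i_{k}=i_{k'}$ one has $n(\alpha(t_{k'})-\alpha(t_{k}))>\epsilon$; the set $S_{\epsilon}(n)=\{\beta_{i_{k}}(t_{k}):1\leq k\leq N\}$ will then be $(n,\epsilon)$-separated inside $[0,1]\times[a,b]$. The key idea is to interleave the curves cyclically: using the continuity and strict monotonicity of $\alpha$, pick $\delta>0$ small and choose $t_{1},\dots,t_{N}$ with $\alpha$-images equally spaced by $(\epsilon/(ln))+\delta$, and set $i_{k}=1+((k-1)\bmod l)$. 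Two indices $k<k'$ with $i_{k}=i_{k'}$ automatically satisfy $k'-k\geq l$, so
\[
\alpha(t_{k'})-\alpha(t_{k})\geq l\Bigl(\tfrac{\epsilon}{ln}+\delta\Bigr)=\tfrac{\epsilon}{n}+l\delta,
\]
which yields $n(\alpha(t_{k'})-\alpha(t_{k}))>\epsilon$ as required, while distinct-index pairs are separated for free by the first observation above.

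The main obstacle, and the only genuinely fiddly part, is turning this interleaving into the sharp count $\lfloor ln(\alpha(b)-\alpha(a))/\epsilon\rfloor$. From $(N-1)\bigl((\epsilon/(ln))+\delta\bigr)\leq \alpha(b)-\alpha(a)$ we get $N-1<ln(\alpha(b)-\alpha(a))/\epsilon$ once $\delta>0$ is chosen small enough, so we may take $N=\lfloor ln(\alpha(b)-\alpha(a))/\epsilon\rfloor$ points provided the spacing parameter $\delta$ is chosen small enough that $(N-1)\bigl((\epsilon/(ln))+\delta\bigr)\leq \alpha(b)-\alpha(a)$, which is possible since the limiting value $(N-1)\epsilon/(ln)\leq \alpha(b)-\alpha(a)$ holds by the definition of the floor. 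The existence of the $t_{k}$ with the prescribed $\alpha$-values is guaranteed by the intermediate value theorem applied to the continuous increasing function $\alpha$. This produces $\#S_{\epsilon}(n)=N=\lfloor ln(\alpha(b)-\alpha(a))/\epsilon\rfloor$, completing the proof.
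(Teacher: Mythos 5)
Your proof is correct and follows essentially the route the paper intends (it omits this proof as ``analogous'' to Lemma \ref{LemGenIso}): use the explicit formula $d(F^k(s,t),F^k(\hat s,\hat t))=\max\{|k(\alpha(\hat t)-\alpha(t))+\hat s-s|,|\hat t-t|\}$, note that points on distinct vertical curves are separated already at $k=0$ since $|s_i-s_{i'}|>\epsilon$, and space the heights so that the $n$-th iterate stretches same-curve pairs beyond $\epsilon$. Your cyclic interleaving of the $l$ curves is a small refinement that produces the exact count $\left\lfloor l n(\alpha(b)-\alpha(a))/\epsilon\right\rfloor$ rather than the $l\left\lfloor n(\alpha(b)-\alpha(a))/\epsilon\right\rfloor$ one gets by reusing the same heights on every curve, though either version suffices for the conclusion $o(F,D)\geq[n]$.
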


The proof of this lemma is analogous as the proof of lemma \ref{LemGenIso} and therefore we are going to omit it. 

Since $\Omega(f)=\A$, we just need to prove that $o(f)=[n]$. Now, by our previous arguments, we just need to see $o(F,D)=[n]$, where $D=[0,1]\times[0,1]$.

Given $\epsilon>0$, consider $G_\epsilon(n)$ as in lemma \ref{LemGenIso}. We know that $g_{F,D,\epsilon}(n) \leq \#G_\epsilon(n)$ and therefore $[g_{F,D,\epsilon}(n)]\leq [n]$. This implies $o(F,D)\leq[n]$. Analogously, we consider $S_\epsilon(n)$ as in lemma \ref{LemSepIso}. Since $s_{F,D,\epsilon}(n) \geq \#S_\epsilon(n)$, we infer that $[s_{F,D,\epsilon}(n)]\geq [n]$, and then  $o(F,D)\geq[n]$.

\section{Cylindrical Cascades.} \label{CAS}


As mentioned in the introduction, a cylindrical cascade for us is a map $f:S^1\times \R \to S^1\times \R $ of the form $f(x,y)=(x+\alpha, y +\varphi(x))$ where $\varphi:S^1\to \R$ is a $C^1$ map. We are going to work with cylindrical cascades through the classical approach, that is, study $\varphi$ as the limit of trigonometric polynomials. To prove Theorem \ref{CyCa}, we are going to construct an example with $o(f)\leq [a(n)]$ for some fixed sequence $a(n)$, and then explain why $f$ is transitive and why we can build this type of examples in a dense set of $\Cy$.

Let us start by considering an irrational number $\alpha$  and $\{p_k/q_k\}_{k\in \N}$ the sequence of Diophantine approximations. Consider also a sequence $b_k$ which decreases to $0$ and define $ \varphi_k:S^1\to [-1,1]$ by $\varphi_k(x) = b_k cos(2\pi q_k x)$ and then 
\[f(x,y)= \left(R_\alpha(x), y +\sum_k \varphi_k(R_\alpha(x))\right).\]

The sequences $\{p_k/q_k\}_{k\in \N}$ and $\{b_k\}_{k\in \N}$ are not going to arbitrary. In fact, their speed of convergence is going to be our variable in order to obtain the result. We are going to discuss throughout the proof what conditions $b_k$ and $q_k$ needs to verify. At the end of the construction, we are going to explain the process of choosing this numbers such that all conditions are verified. To begin with, we need
\begin{equation}\label{CC1}
\sum_k b_k q_k< \infty, 
\end{equation}
for $f$ to be a $C^1$ map.  

Let us represent the Weyl sum of $\varphi_k$ under the rotation $R_\alpha$ by $S_n(\varphi_k) = \sum_{j=0}^{n-1} \varphi_k\circ R^j_\alpha$. With this, when $f$ is iterated we see that
\[f^n(x,y)= \left(R^n_\alpha(x), y +\sum_k S_n(\varphi_k)(x)\right).\]

\subsection{Upper-bound for \texorpdfstring{$o(f)$}{o(f)}.}

Our goal now is to construct $f$ such that $o(f)\leq [a(n)]$. For this to happen, our strategy is going to be set by the following lemma. 

\begin{lema}
Suppose $a(n)$ is a non decreasing sequence such that $\sum_{k} |S_n(\varphi'_k)|\leq a(n)$. Then, $o(f)\leq [a(n)]$.
\end{lema}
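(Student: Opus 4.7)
The plan is to exhibit, for each compact $K\subset S^1\times\R$ and each $\epsilon>0$, an $(n,\epsilon)$-generator of $K$ of cardinality $O(a(n))$, with the implicit constant depending on $K$ and $\epsilon$ but not on $n$. Since $f^m(x,y)=(R_\alpha^m(x),\,y+S_m(\varphi)(x))$ and $R_\alpha$ is an isometry, the dynamical distance between two points $p_1=(x_1,y_1)$ and $p_2=(x_2,y_2)$ takes the form
\[
d_n(p_1,p_2)=\max\bigl\{d_{S^1}(x_1,x_2),\; \max_{0\leq m\leq n}|y_1-y_2+S_m(\varphi)(x_1)-S_m(\varphi)(x_2)|\bigr\}.
\]
Applying the hypothesis at every intermediate time $m$ together with the monotonicity of $a$ gives
\[
\|S_m(\varphi')\|_\infty\leq\sum_k\|S_m(\varphi_k')\|_\infty\leq a(m)\leq a(n)
\]
for every $m\leq n$; by the mean value theorem this says that $S_m(\varphi)$ is $a(n)$-Lipschitz on $S^1$, uniformly in $m\leq n$.

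Fix $K$ and $\epsilon$, and choose $M$ so that $K\subset S^1\times[-M,M]$. I would then partition $S^1\times[-M,M]$ into rectangular cells of $x$-width $\epsilon/(2a(n))$ and $y$-height $\epsilon/2$, and for each cell intersecting $K$ pick a single representative in $K\cap\text{cell}$. The resulting finite set $E_n\subset K$ has cardinality
\[
\#E_n\leq\left\lceil\frac{2a(n)}{\epsilon}\right\rceil\cdot\left\lceil\frac{4M}{\epsilon}\right\rceil\leq C(K,\epsilon)\,a(n)
\]
for all $n$ large enough that $a(n)\geq 1$. Given $(x,y)\in K$ lying in a cell with representative $(x_i,y_j)\in E_n$, we have $d_{S^1}(x,x_i)\leq\epsilon/(2a(n))$ and $|y-y_j|\leq\epsilon/2$, so by the Lipschitz bound, for every $m\leq n$,
\[
|y-y_j+S_m(\varphi)(x)-S_m(\varphi)(x_i)|\leq\tfrac{\epsilon}{2}+a(n)\cdot\tfrac{\epsilon}{2a(n)}=\epsilon,
\]
while trivially $d_{S^1}(x,x_i)\leq\epsilon$. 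Hence $d_n((x,y),(x_i,y_j))\leq\epsilon$, and $E_n$ is an $(n,\epsilon)$-generator of $K$.

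This yields $g_{f,K,\epsilon}(n)\leq C(K,\epsilon)\,a(n)$, and therefore $[g_{f,K,\epsilon}(n)]\leq[a(n)]$ in $\OOR$. Taking the supremum over $\epsilon>0$ gives $o(f,K)\leq[a(n)]$, and then the supremum over compact $K\subset S^1\times\R$ gives $o(f)\leq[a(n)]$, as claimed. The only delicate point in this plan is bridging the gap between the hypothesis, which nominally bounds the Weyl sum at time $n$, and the uniform Lipschitz control needed at every intermediate time $m\leq n$; this is handled by reading $\sum_k|S_n(\varphi_k')|\leq a(n)$ as a statement holding for every time variable, and invoking the monotonicity of $a$.
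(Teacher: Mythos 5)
Your proof is correct and takes essentially the same route as the paper's: both arguments reduce to the observation that, by the hypothesis at every intermediate time and the monotonicity of $a$, each $S_m(\varphi)$ with $m\leq n$ is $a(n)$-Lipschitz, so points within horizontal distance comparable to $\epsilon/a(n)$ cannot be $(n,\epsilon)$-separated, which bounds the relevant counting function by a constant times $a(n)$. The only cosmetic difference is that you exhibit an $(n,\epsilon)$-generator on each compact set while the paper bounds the cardinality of $(n,\epsilon)$-separated sets; these yield the same generalized entropy by Proposition \ref{SepEquGen}.
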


\begin{proof}
The map $f$ does not separate orbits in the vertical axis, so we need to compute the separation of orbits in the horizontal axis. Let us fix $\epsilon$ and consider two points $x_1,x_2 \in S^1$ such that $d(x_1,x_2)\leq \epsilon/a(n)$. By a simple computation, we deduce that 
\[ d(f^j(x_1,y),f^j(x_2,y))\leq \sum_{k} |S_j(\varphi'_k)|d(x_1,x_2)\leq \epsilon.a(j)/a(n)\leq \epsilon.\]

Therefore, $(x_1,y)$ and $(x_2,y)$ are not $(n,\epsilon)$-separated. This implies that $s_{f,\epsilon}(n)\leq a(n)/\epsilon^2$, and then $[s_{f,\epsilon}(n)]\leq [a(n)]$ $\forall \epsilon$. From this we conclude that $o(f)\leq [a(n)]$. 
 
\end{proof}

This lemma, give us a way to bound $o(f)$  which is to compute $|S_n(\varphi'_k)|$. 

\subsection{Known facts about Diophantine approximations.}        

Let us briefly recall some classical properties of Diophantine approximations. If 
\[\alpha = \frac{1}{r_1 + \frac{1}{r_2 + \frac{1}{r_3+ \frac{1}{\dots}}}},\]
then $q_{k+1} = r_{k+1} q_k + q_{k-1}$. Since $\frac{q_{k-1}}{q_k} \leq 1$,  we infer the estimate
\begin{equation}\label{rk}
r_{k+1} \approx \frac{q_{k+1}}{q_k}.
\end{equation}
 
We also know that
\[\frac{1}{(r_{k+1}+2)(q_k)^2}\leq \left|\alpha  - \frac{p_k}{q_k}\right|\leq  \frac{1}{r_{k+1}(q_k)^2},\] 
which implies 
\begin{equation}\label{est1}
\frac{1}{(r_{k+1}+2)q_k}\leq \left\|q_k \alpha\right\|\leq  \frac{1}{r_{k+1}q_k},
\end{equation} 
where $\left\|q_k \alpha\right\|$ is the distance in the circle between the projection of $0$ and $q_k \alpha$.

\subsection{Upper-bounds for the Weyl sum of the derivatives.}       

In this subsection, we are going to show two things. First, we obtain a constant upper-bound for $|S_n(\varphi_k')|$ for any $n$ and second we obtain a linear upper-bound for up to certain integer. 

The upper-bound we get for $|S_n(\varphi_k')|$ comes from the fact that $\varphi_k'$ has a solution for the co-homological equation and therefore the orbit of a point moves along the graph of said solution.

\begin{lema} $|S_n(\varphi_k')|\leq 4\pi b_k q_k q_{k+1}$, for every $n\in \N$.
\end{lema}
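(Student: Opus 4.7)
The plan is to exploit the fact that $\varphi_k$ is a pure trigonometric mode, which reduces the Weyl sum to a geometric series. First I would differentiate to get $\varphi_k'(x) = -2\pi q_k b_k \sin(2\pi q_k x)$, so that
$$S_n(\varphi_k')(x) = -2\pi q_k b_k \sum_{j=0}^{n-1} \sin\bigl(2\pi q_k(x + j\alpha)\bigr),$$
which is, up to a factor of $2\pi q_k b_k$, the imaginary part of a finite geometric series with ratio $\omega = e^{2\pi i q_k \alpha}$. Summing gives the crucial uniform-in-$n$ bound
$$\Bigl|\sum_{j=0}^{n-1} e^{2\pi i q_k (x + j\alpha)}\Bigr| = \Bigl|\frac{\omega^n-1}{\omega-1}\Bigr| \leq \frac{2}{|\omega-1|}.$$
This is where the main content of the lemma lives: the estimate does not depend on $n$ because $\varphi_k'$ admits a bounded coboundary (solving $u\circ R_\alpha - u = \varphi_k'$ with a single-frequency trigonometric polynomial whose denominator is $\omega-1$) and $S_n(\varphi_k')$ telescopes as $u\circ R_\alpha^n - u$.

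Next I would translate $|\omega - 1|$ into a Diophantine quantity using $|\omega - 1| = 2|\sin(\pi q_k \alpha)| \geq 4\|q_k\alpha\|$, which follows from the elementary inequality $|\sin(\pi t)| \geq 2\|t\|$. Feeding in the left-hand side of \eqref{est1}, $\|q_k\alpha\| \geq 1/((r_{k+1}+2) q_k)$, yields
$$|S_n(\varphi_k')(x)| \leq 2\pi q_k b_k \cdot \frac{2}{4\|q_k\alpha\|} \leq \pi\,b_k q_k^2 (r_{k+1} + 2).$$

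Finally, to bring this into the claimed form I would use the continued-fraction recursion $q_{k+1} = r_{k+1} q_k + q_{k-1}$, which gives $r_{k+1}q_k \leq q_{k+1}$, hence $(r_{k+1}+2) q_k \leq q_{k+1} + 2 q_k \leq 3 q_{k+1}$ (using $q_k \leq q_{k+1}$). Combining, $|S_n(\varphi_k')| \leq 3\pi\,b_k q_k q_{k+1} \leq 4\pi\,b_k q_k q_{k+1}$.

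There is no genuine obstacle; the proof is a direct computation. The only point requiring minor care is the bookkeeping of constants so that the final factor is exactly $4\pi$, and the only conceptual point worth emphasizing is why the bound is independent of $n$, which is explained by the telescoping interpretation above.
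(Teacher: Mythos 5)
Your proof is correct and follows essentially the same route as the paper: the geometric-series summation with ratio $\omega=e^{2\pi i q_k\alpha}$ is exactly the telescoping of the coboundary $u_k$ that the paper writes down explicitly, and both arguments then feed the same Diophantine estimate $\|q_k\alpha\|\geq 1/((r_{k+1}+2)q_k)$ into the denominator $|\omega-1|$. If anything, your bookkeeping of constants (via $|\omega-1|\geq 4\|q_k\alpha\|$ and $(r_{k+1}+2)q_k\leq 3q_{k+1}$) is slightly more careful than the paper's, which settles for an ``$\approx$'' and discards constants since only orders of growth matter.
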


\begin{proof}
To prove this, we start by observing that we can write $\varphi_k$ as 
\[ \varphi_k(x)= \frac{b_k}{2}\left(exp(2\pi q_k i x) + exp(-2\pi q_k i x)\right).\] 
If we define the map 
\[ u_k(x)=\frac{b_k}{2}\left(\frac{exp(2\pi q_k i x)}{exp(2\pi q_k i  \alpha)-1} + \frac{exp(-2\pi q_k i x)}{exp(-2\pi  q_k i \alpha)-1}\right),\]
then we deduce that $\varphi_k(x) = u_k(x+\alpha)-u_k(x)$. Therefore, $S_n(\varphi_k')(x)=u_k'(x+n\alpha)-u_k'(x)$ and since
\[u_k'(x)=\frac{b_k 2\pi q_k i}{2}\left(\frac{exp(2\pi q_k i x)}{exp(2\pi q_k i \alpha)-1} - \frac{exp(-2\pi q_k i x)}{exp(-2\pi  q_k i \alpha)-1}\right),\]
we infer that
\[|S_n(\varphi_k')|\leq 2|u_k'|\leq \frac{4\pi b_k q_k}{|exp(2\pi q_k i \alpha)-1|}.\]

Now $|exp(2\pi q_k i \alpha)-1|$ is in fact $\left\|q_k \alpha\right\|$ and by equations \ref{rk} and \ref{est1} we see that
\[|S_n(\varphi_k')|\leq 4\pi b_k q_k^2 (r_{k+1} + 2) \approx 4\pi b_k q_k q_{k+1}.\]
\end{proof}

Since we are studying orders of growth, the constant $4\pi$ can be ignored. Therefore, from now on we are going to assume:
\begin{equation}\label{uppbnd}
|S_n(\varphi_k')|\leq b_k q_k q_{k+1}\quad \forall n\in \N. 
\end{equation}


We proceed to show that $|S_n(\varphi_k')|$ has a linear upper-bound for up to certain integer. 

\begin{lema}\label{lnrgrwth} If $n\leq \frac{\sqrt{q_{k+1}}}{\pi \sqrt{2 b_k q_k}}$, then $|S_n(\varphi_k')|\leq 2\pi b_k q_k n +1$. 
\end{lema}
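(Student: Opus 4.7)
The plan is to leverage the co-boundary representation $\varphi_k = u_k\circ R_\alpha - u_k$ already established in the proof of the previous lemma, which gives
\[ S_n(\varphi_k')(x) = u_k'(x+n\alpha) - u_k'(x), \]
turning the Birkhoff sum into a single increment of the transfer function $u_k'$. Combining the two complex exponentials in the expression for $u_k$, one simplifies it to $u_k(x) = b_k \sin(\pi q_k(2x-\alpha))/(2\sin(\pi q_k\alpha))$, so $u_k$ is a pure sinusoid of frequency $q_k$, and in particular both $u_k$ and $u_k'$ are $(1/q_k)$-periodic.

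First I would exploit this periodicity. Writing $q_k\alpha = p_k + \delta_k$ with $|\delta_k| = \|q_k\alpha\|$, the estimate \ref{est1} together with the recursion $q_{k+1} = r_{k+1}q_k + q_{k-1}$ gives $|\delta_k| \leq 1/(r_{k+1}q_k) \leq 2/q_{k+1}$. Since $np_k/q_k$ is a multiple of $1/q_k$, periodicity of $u_k'$ yields
\[ S_n(\varphi_k')(x) = u_k'(x + n\delta_k/q_k) - u_k'(x). \]
The point of this reduction is that the displacement $n\delta_k/q_k$ is now tiny---at most $2n/(q_k q_{k+1})$---so the increment of $u_k'$ can be estimated by a linear expression in $n$ rather than by the much larger supremum bound used in the preceding lemma.

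Second, I would apply the mean value theorem together with the identity $u_k'' = -(2\pi q_k)^2 u_k$ (valid because $u_k$ is a pure frequency-$q_k$ sinusoid), giving $\sup|u_k''| \leq 2\pi^2 b_k q_k^2/|\sin(\pi\delta_k)|$, and so
\[ |S_n(\varphi_k')(x)| \leq \frac{n|\delta_k|}{q_k}\cdot \sup|u_k''| \leq \frac{2\pi^2 b_k q_k n |\delta_k|}{|\sin(\pi\delta_k)|}. \]
The Taylor estimate $|\sin(\pi\delta_k)| \geq \pi|\delta_k|(1-(\pi\delta_k)^2/6)$ then gives $|\delta_k|/|\sin(\pi\delta_k)| \leq (1+(\pi\delta_k)^2/3)/\pi$, leading to
\[ |S_n(\varphi_k')(x)| \leq 2\pi b_k q_k n + \frac{2\pi^3 b_k q_k n \delta_k^2}{3}. \]

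Finally, it remains to verify that, under the hypothesis $n \leq \sqrt{q_{k+1}}/(\pi\sqrt{2 b_k q_k})$---equivalently $2\pi^2 b_k q_k n^2 \leq q_{k+1}$---combined with $|\delta_k|\leq 2/q_{k+1}$, the correction term $2\pi^3 b_k q_k n \delta_k^2/3$ is indeed at most $1$. The main obstacle is the careful tracking of the numerical constants in the Taylor expansion of $\sin$ so that the correction is genuinely bounded by $1$ and not merely by some unspecified constant; the stated hypothesis turns out to be stronger than strictly needed for this argument, which should leave comfortable slack to conclude.
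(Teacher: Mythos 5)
Your proof is correct, but it takes a genuinely different route from the paper's. The paper never touches the coboundary representation in this lemma: it compares the Birkhoff sum directly with the ``frozen'' sum $n\,\varphi_k'(x)$, bounding each term $|\sin(2\pi q_k(x+j\alpha))-\sin(2\pi q_kx)|$ by $2\pi q_k j\|q_k\alpha\|$ via the mean value theorem, which accumulates to a quadratic error $2\pi^2 b_k q_k^2 n^2\|q_k\alpha\|\lesssim 2\pi^2 b_k q_k n^2/q_{k+1}$; the hypothesis on $n$ is exactly what makes this error at most $1$. You instead recycle the transfer function $u_k$ from the preceding lemma, collapse the sum to the single increment $u_k'(x+n\alpha)-u_k'(x)$, exploit the $(1/q_k)$-periodicity of $u_k'$ to shrink the displacement to $n\delta_k/q_k$, and apply the mean value theorem once with $\sup|u_k''|=(2\pi q_k)^2\sup|u_k|$. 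Your error term $\tfrac{2}{3}\pi^3 b_k q_k n\delta_k^2$ is far smaller than the paper's (linear rather than quadratic in $n$, and carrying an extra factor of $\delta_k$), so your estimate $|S_n(\varphi_k')|\leq 2\pi b_k q_k n(1+O(\delta_k^2))$ is in fact meaningful well beyond the stated range of $n$ --- the hypothesis is only used, with room to spare, to absorb the correction into the $+1$; as you note, there is comfortable slack (strictly one needs $q_{k+1}$ larger than a small absolute constant, which is automatic here since the $q_k$ are chosen enormous). The only price of your approach is the explicit resummation of $u_k$ into the closed form $b_k\sin(\pi q_k(2x-\alpha))/(2\sin(\pi q_k\alpha))$, which you carry out correctly; in exchange you get a cleaner one-step argument and a sharper constant in front of the error. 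Both proofs ultimately rest on the same fact, namely that $q_k\alpha$ is within $O(1/q_{k+1})$ of an integer.
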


\begin{proof}
To prove this, given $x\in S^1$ we compare $S_n(\varphi_k')(x)$ with $- 2 \pi b_k q_k sen(2\pi q_k x)n$. Recall that $S_n(\varphi_k')(x) = \sum_{j=0}^{n-1} \varphi'_k\circ R^j_\alpha (x) = \sum_{j=0}^{n-1} - 2\pi b_k q_k sen(2\pi q_k ( x + j\alpha))$ and therefore
\[\left|S_n(\varphi_k')(x) - \left(- 2 \pi b_k q_k sen(2\pi q_k x)n\right) \right|\leq 2\pi b_k q_k \sum_{j=0}^{n-1} \left| sen(2\pi q_k ( x + j\alpha)) - sen(2\pi q_k x)\right|. \]
Now, by Mean Value theorem we deduce that 
\[\left|S_n(\varphi_k')(x) - \left(- 2 \pi b_k q_k sen(2\pi q_k x)n\right) \right|\leq 2\pi b_k q_k \sum_{j=0}^{n-1} 2\pi q_k j \left\| q_k \alpha \right\| \leq 2\pi^2 b_k q_k^2 n^2 \left\| q_k \alpha \right\|,\]
and if we combine this with equations \ref{rk} and \ref{est1}, we conclude that
\[ 2\pi^2 b_k q_k^2 n^2 \left\| q_k \alpha \right\| \leq \frac{2\pi^2 b_k q_k n^2}{r_{k+1}} \approx  \frac{2\pi^2 b_k q_k n^2}{q_{k+1}}. \]
By the previous, as long as $n$ is such that $\frac{2\pi^2 b_k q_k n^2}{q_{k+1}} \leq 1$, we know that
\[ |S_n(\varphi_k')(x)|\leq |2 \pi b_k q_k sen(2\pi q_k x)n|+ |S_n(\varphi_k')(x) -(- 2 \pi b_k q_k sen(2\pi q_k x)n)|  \leq  2 \pi b_k q_k n+ 1.\]
This is, $|S_n(\varphi_k')|\leq 2 \pi b_k q_k n + 1$ up to $n\approx \frac{\sqrt{q_{k+1}}}{\pi \sqrt{2 b_k q_k}}$.
\end{proof}

Again, we are going to ignore the constants that do not depends on $k$ and $n$. And so we are going to work with the equations:
\begin{equation}\label{lnr1}
|S_n(\varphi_k')|\leq b_k q_k n,
\end{equation}
up to 
\begin{equation}\label{lnr2}
n_k = \frac{\sqrt{q_{k+1}}}{ \sqrt{b_k q_k}}. 
\end{equation}

Since we are going to want that $\lim_k n_k = \infty$ we need 
\[\lim_k \frac{\sqrt{q_{k+1}}}{ \sqrt{b_k q_k}} = \infty.
\] 
This is given by the fact that $\lim_k q_k= \infty $ and  $\lim_k b_k q_k =0$ (equation \ref{CC1}).

We resume the information obtained in the previous two lemmas within Figure \ref{FigUpprbnd}.

\begin{figure}[!htp]
\centering
\includegraphics{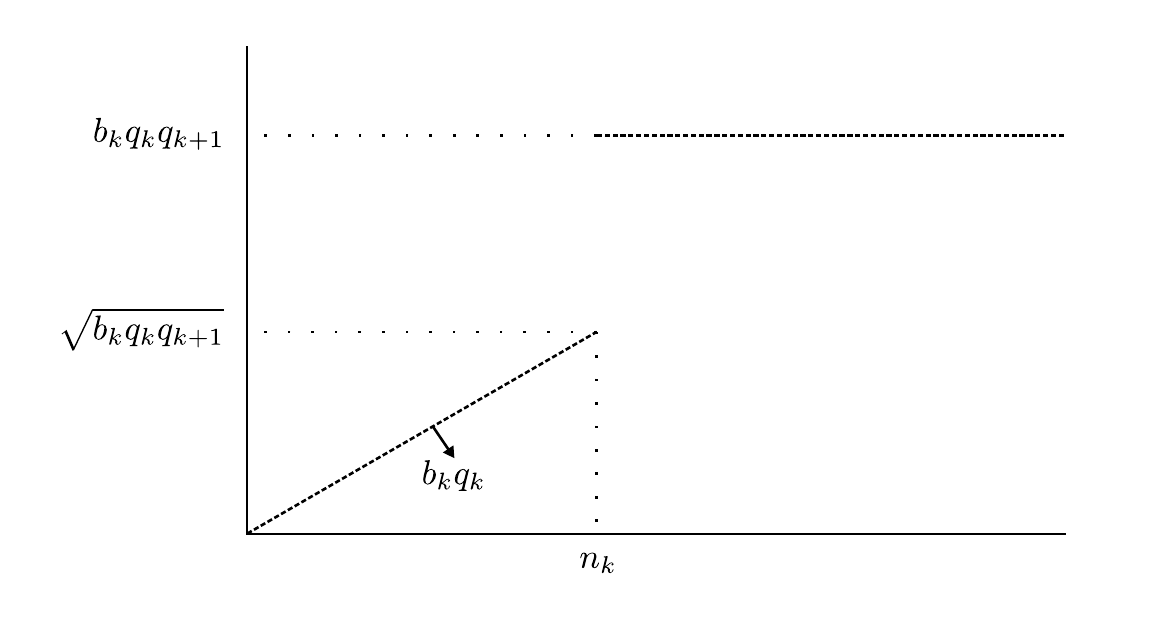}
\caption{Upperbound for $|S_n(\varphi_k')|$.}
\label{FigUpprbnd}

\end{figure}

\subsection{Sum of the upper-bounds.}        

We proceed now to add all of these upper bounds on $k$. Although, it seems natural to add up this bounds on each interval $[n_{k-1}, n_k]$, since $n_k$ depends on $b_k$, working on such intervals would be troublesome for the inductive construction. Because of this, let us take a sequence $m_k$ such that $m_k\leq n_k$ and define the intervals $I_1 = [0,m_1]$ and $I_k= [m_{k-1}, m_k]$. We are going to cut the linear bound on $m_k$ and therefore, on each $I_k$, the upper bounds add up to a function $C_k n + D_k$, where 
\[ C_k = \sum_{j\geq k} b_j q_j,\]
 and 
\[D_k = \sum^{k-1}_{j=1} b_j q_j q_{j+1}.\]
Figure \ref{FigSumupprbnd} illustrates this piecewise linear sequence.  

\begin{figure}[!htp]

\centering
\includegraphics{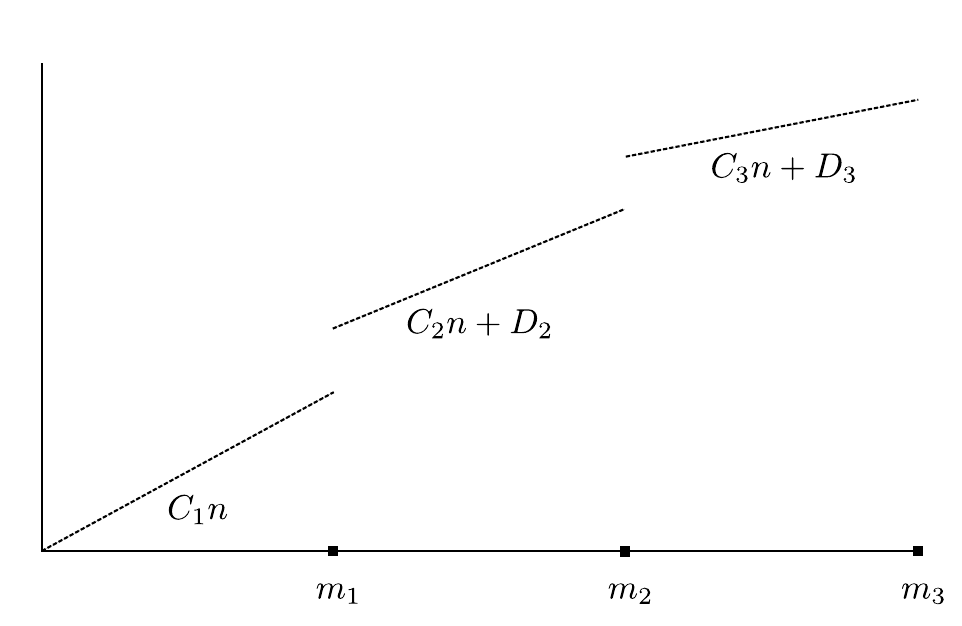}
\caption{Sum of the upper bounds}
\label{FigSumupprbnd}
\end{figure}
Observe that $C_k$ is the tail of the convergent series $\sum_k b_k q_k$, and therefore the slopes in this piecewise linear sequences tend to $0$. 

Let us call $e(n) = C_k n + D_k$ if $m_{k-1}\leq n < m_k$. Our goal now is to choose $b_k$ and $q_k$ such that $[e(n)]\leq [a(n)]$. The construction is going to be by induction on $k$. However, since $n_k$ depends on $q_{k+1}$, $q_{k+1}$ has to be chosen in step $k$.

\subsection{Inductive construction for the upper-bound.}        

For each $k$, and for each $j \leq k$, define $C_j^k = \sum^{i=k}_{i= j} b_i q_i$ and then for every $n\leq m_k$ we define $e^k(n) = C_j^k n + D_j$ if $m_{j-1} \leq n < m_j$. We need to do this, because $C_j$ depends on future $b_k$ and $q_k$. We also define $e^k(m_k)= D_k$. Once this is set, our inductive hypothesis is 
\[e^k(n) < a(n) - \frac{1}{2^k} \quad \forall n \leq m_k.\]

 Since $e(n) = \lim_k e^k(n)$, if this holds, then we infer that $e(n) \leq a(n)$ and therefore that $[e(n)] \leq [a(n)]$. 

Suppose that $b_k$, $q_{k+1}$ and $m_k$ have been chosen such that  $e^k(n) < a(n) - \frac{1}{2^k}\  \forall n \leq m_k$. Fix some big $m_{k+1}$ and if $q_{k+2}$ is such that $\frac{\sqrt{q_{k+2}}}{\sqrt{q_{k+1}}}> m_{k+1}$, since $b_{k+1}$ is going to be smaller than $1$, 
\begin{equation} \label{CUB1}
n_{k+1}> m_{k+1}.
\end{equation}

We are first going to see which restrictions we need for $b_{k+1}$ such that the inductive hypothesis in the step $k+1$ holds up to $m_k$. We know that $C_j^k n + D_j < a(n) - \frac{1}{2^k}$ if $n \leq m_k$, and we want 
\begin{equation}\label{CUB2}
C_j^{k+1} n + D_j < a(n) - \frac{1}{2^{k+1}}\text{ if }n \leq m_k.
\end{equation}
 Now $C_j^{k+1}= C_j^k + b_{k+1}$, and thus we maintain our inductive property up to $m_k$ if we ask for $b_{k+1} m_k <1/2^{k+1}$.

Now, for $n$ between $m_k$ and $m_{k+1}$ we want 
\begin{equation} \label{CUB3}
e^{k+1}(n) = C_{k+1}^{k+1} n + D_{k+1} < a(n) - \frac{1}{2^{k+1}} \text{ if } m_k\leq n < m_{k+1}.
\end{equation}
 and 
\begin{equation} \label{CUB4}
e^{k+1}(m_{k+1})= D_{k+2} =\sum^{k+1}_{j=1} b_j q_j q_{j+1} \leq a(m_{k+1}) - \frac{1}{2^{k+1}}.
\end{equation}
 We observe that the inequality in equation \ref{CUB3} depends on $b_{k+1}$ yet it does not depend on $q_{k+2}$. On the other hand, equation \ref{CUB4} depends on both $b_{k+1}$ and $q_{k+2}$. Because of this, we choose first $b_{k+1}$ and then $q_{k+2}$ for both equations to hold.

Figure \ref{FigFinalarg} illustrates the previous argument. 
\begin{figure}[!htp]

\centering
\includegraphics{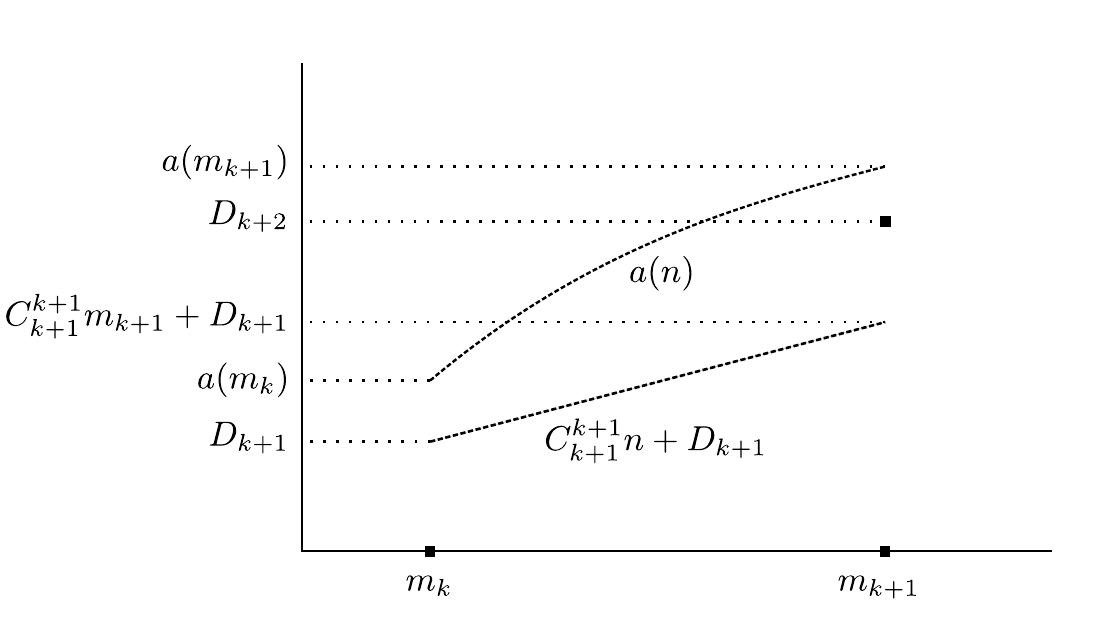}
\caption{Inductive choice of $b_{k+1}$ and $q_{k+2}$}
\label{FigFinalarg}
\end{figure}

With this, we finish our study for the conditions needed for $o(f) \leq [a(n)]$. 

\subsection{\texorpdfstring{$f$}{f} is not Lyapunov stable.}        

In this subsection, we are going to investigate which are the needed conditions for $f$ to not be Lyapunov stable. For this to happen, we need to show that $s_{f,\epsilon}(n)$ is not a bounded sequence, or equivalently that $\lim_n s_{f,\epsilon}(n) = \infty$. Again, to estimate $s_{f,\epsilon}(n)$ we are going to study $|S_n(\varphi_k')(x)|$. However, in this situation, we are not going to control the Weyl sums for every $x$ but for big subsets of $S^1$.  

The idea to prove that $o(f)>0$ follows from the arguments of lemma \ref{lnrgrwth}. We would like to observe that from the proof of said lemma we can conclude that
\[|S_n(\varphi_k')(x)| \geq |2 \pi b_k q_k sen(2\pi q_k x)n| - 1\quad \forall n\leq n_k.\]
When $n= n_k$, we infer that
\[|S_{n_k}(\varphi_k')(x)| \geq |\sqrt{2b_k q_k q_{k+1}} sen(2\pi q_k x)| - 1.\]

Now, if we consider the set $\Lambda_k = \{x\in S^1:|sen(2\pi q_k x)| > \frac{1}{\sqrt{2}} \}$ and $x\in \Lambda_k$, then 
\[|S_{n_k}(\varphi_k')(x)| \geq \sqrt{b_k q_k q_{k+1}} - 1.\]

We assert that when $x\in \Lambda_k$,  $\sum_j S_{n_k}(\varphi_j')(x)$ is comparable to $|S_{n_k}(\varphi_k')(x)|$. This happens for two reasons. For $j>k$, $|\sum_{j>k} S_{n_k}(\varphi_j')(x)|$ is going to be small. If we define $f_k(x,y)= \left(R_\alpha(x), y +\sum^k_{j=1} \varphi_j(R_\alpha(x))\right)$, then  we can choose $b_j$ for $j> k$ small enough such that 
\begin{equation}\label{CLB1}
dist_{C^1} (f_k^{n_k}, f^{n_k})\leq \frac{1}{2^k}. 
\end{equation}

For $j<k$,  
$|\sum^{j=k-1}_{j=1} S_{n_k}(\varphi_j')(x)|\leq D_k$ and when choosing $b_k$ and $q_{k+1}$ we can do it such that 
\begin{equation}\label{CLB2}
D_k \leq \sqrt{b_k q_k q_{k+1}}/2.
\end{equation}

For this choice, if $x\in \Lambda_k$, then we deduce that
\[\left|\sum_{j} S_{n_k}(\varphi_j')(x)\right| \geq \frac{1}{2} \sqrt{b_k q_k q_{k+1}} - 1.\]

Now, $\Lambda_k$ is the union of $q_k$ intervals of length $\frac{A}{q_k}$ where $A$ is a constant independent of $k$. If $I$ is one of these intervals, there are in $I$ at least $ \frac{\left(\sqrt{b_k q_k q_{k+1}} - 1\right)A}{2\epsilon q_k}$ points which are $(n,\epsilon)$-separated. If $q_{k+1}$ is big enough such that 
\begin{equation}\label{CLB3}
 \frac{A}{2\epsilon} \frac{\sqrt{b_k q_{k+1}} }{ \sqrt{q_k}} > 1,
\end{equation} 
then $s_{f,\epsilon}(n_k) \geq q_k$. Therefore, $\lim_k s_{f,\epsilon}(n_k)= \infty$ which implies that $o(f)>0$.

\subsection{Coherence in the inductive construction and final remarks.}        

It remains to verify that there is no conflict in the choices of $b_k$ and $q_k$. Let us state here the construction process. Suppose that we have chosen $m_k$, $b_k$ and $q_{k+1}$. We pick first $m_{k+1}$ big enough such that $D_{k+1} < \sqrt{a(m_{k+1}) -D_{k+1}}/2$. This restriction is for equation \ref{CLB2}. Then, we obtain $\hat q_{k+2}$ a  lower-bound for $q_{k+2}$ such that $n_{k+1}$ is going to be bigger than $m_{k+1}$ (equation \ref{CUB1}). We follow by choosing $b_{k+1}$ such that  equation \ref{CC1}, \ref{CUB2}, \ref{CUB3} and \ref{CLB1} holds. Observe that none of the previous equations depends on $q_{k+2}$, they only depend on $b_{k+1}$ and equation \ref{CUB3} depends on $m_{k+1}$ which has already been fixed. We also choose $b_{k+1}$ small enough such that we can pick $q_{k+2}> \hat q_{k+2}$ and 
\[a(m_{k+1}) - 1<   D_{k+2} = D_{k+1} + b_{k+1} q_{k+1} q_{k+2} < a(m_{k+1}) - 1/2^{k+1}.\]
The first inequality in the previous equation implies equation \ref{CLB2}. The second one implies equation \ref{CUB4}. Once we choose $q_{k+2}$ accordingly to the previous and such that equation \ref{CLB3} is verified, we have finished. 

With this, we conclude how to build an example such that $0<o(f)<[a(n)]$. It is known since \cite{GoHe55} that a cylindrical cascade is transitive if and only if the co-homological equation has no continuous solution. If this example had a continuous solution, then the orbits would move along the translated graph of said solution and then $f$ would be Lyapunov stable. This would imply that $o(f)=0$ which is a contradiction, and therefore our example is transitive.  

In order to construct a dense family of examples like these. We approximate any cylindrical cascade $f(x,y)=(R_\alpha(x), y + \varphi(x))$ by a map of the form $\hat f(x,y) =(R_\alpha(x), y + \hat\varphi(x))$ where $\hat\varphi$ is a trigonometric polynomial. We then consider $g(x,y) = (x+\alpha', y + \hat \varphi(x) + \sum_{k\geq k_0}\varphi_k(x))$ where $\alpha'$ is close to $\alpha$ and $\varphi_k$ are as the ones we have already built. The map $g$ is going to verify $0<o(f)<[a(n)]$. To see this, observe that $\hat \varphi$  has solution to the co-homological equation and therefore what it adds to the separation of orbits is finite. In particular, we can ignore it. Now, by the same argument we conclude that it is the tail of the series $\sum_{k}\varphi_k(x)$ what creates the positive and bounded generalized entropy, and therefore $g$ satisfies the desired property.  

As a final observation in this topic, we would like to point out that we could have made this construction taking sub-sequences of the $q_k$, instead of constructing them one by one. This approach would certainly light the constrictions of $\alpha$, however it would have overcharged the notation.

\section{Relationship between \texorpdfstring{$o(f)$}{o(f)} and \texorpdfstring{$f_{*,1}$}{f*1} - proof of Theorem \ref{Shub}.}\label{SHUB}

In this section, we are going prove Theorem \ref{Shub}. Let $M$ be a manifold of finite dimension and $f:M\to M$ be a homeomorphism. By the arguments of Manning in \cite{Ma75} we have the following lemma.

\begin{lema}
If $A$ is the matrix that represents the action of $f_{*,1}$, then 
\[\left\|A^{n-1}\right\|\leq 12(1 + g_{f,\epsilon}(n)).\]
\end{lema}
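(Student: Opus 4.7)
My plan is to follow Manning's original strategy from \cite{Ma75}, which controls the action on $H_1$ in purely combinatorial terms by exploiting that a small $(n,\epsilon)$-generating set forces $f^{n-1}$ to have bounded combinatorial complexity relative to a fixed triangulation of $M$.

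First I would set up the geometric framework. Fix a finite smooth triangulation $K$ of $M$ (or a CW structure if $M$ is only topological, using that the conclusion only requires $M$ to be a compact ANR). Pick $\epsilon_0>0$ smaller than a Lebesgue number for the cover of $M$ by open stars of vertices of $K$, so that any set of diameter $\leq 2\epsilon_0$ lies inside a single star. Select 1-cycles $\gamma_1,\dots,\gamma_b$ in the 1-skeleton $K^{(1)}$ whose classes form a basis of the free part of $H_1(M,\Z)$; let $\|\cdot\|$ on $H_1(M,\R)$ be the $\ell^1$-norm in this basis, so that the matrix $A$ representing $f_{*,1}$ has its norm controlled by the number of 1-simplices needed to represent each $A^{n-1}[\gamma_i]$.

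Next I would turn an $(n,\epsilon)$-generator into a combinatorial approximation of $f^{n-1}$. Let $E=\{x_1,\dots,x_m\}$ realize $m=g_{f,\epsilon}(n)$ with $\epsilon<\epsilon_0$. The dynamical balls $B(x_j,n,\epsilon)$ cover $M$ and, by definition, $f^{n-1}(B(x_j,n,\epsilon))\subset B(f^{n-1}(x_j),\epsilon)$, a set of diameter $\leq 2\epsilon$, hence contained in a single star $\mathrm{St}(v_j)$ of $K$. Using this data, I would subdivide each basis cycle $\gamma_i$ finely enough that each of its subarcs $\alpha$ lies in some dynamical ball $B(x_{j(\alpha)},n,\epsilon)$; this uses at most a constant times $m$ subarcs because the nerve of the dynamical cover has $m$ vertices and the 1-skeleton of this nerve has at most a universal bound of edges per vertex coming from the triangulation geometry. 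Applying $f^{n-1}$, the image of each subarc lies in the star $\mathrm{St}(v_{j(\alpha)})$, which is contractible, so the image arc is homotopic rel endpoints to an edge path in $K^{(1)}$ of bounded combinatorial length (at most 2 edges in a simplex of dimension $\leq \dim M$, absorbed into the constant).

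Assembling these pieces produces a 1-cycle $c'\subset K^{(1)}$ homologous to $f^{n-1}(\gamma_i)$, so $[c']=A^{n-1}[\gamma_i]$. The cycle $c'$ uses at most $6m$ edges (three edges in each triangle visited, two triangles per dynamical ball in the worst bookkeeping), giving $\|A^{n-1}[\gamma_i]\|\leq 6m$ in the chosen basis; since $A$ is expressed with respect to a fixed basis independent of $n$, we get $\|A^{n-1}\|\leq 12(1+m)$ after absorbing the contribution of the basis and the $+1$ to handle the degenerate case $m=0$ and the translation between $\ell^1$ and operator norm.

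The main obstacle I anticipate is making the combinatorial bookkeeping rigorous in the topological (not smooth) category: the dynamical balls $B(x_j,n,\epsilon)$ need not be simplicial, and showing that the nerve of this cover injects combinatorially into $K^{(1)}$ up to a bounded factor requires a careful application of the simplicial approximation theorem or, alternatively, a direct use of the fact that the $f^{n-1}$-image of each dynamical ball is $\epsilon_0$-small so fits inside a single contractible star. Once this geometric step is secured, the explicit constant $12$ is a matter of packing combinatorics in the ambient triangulation.
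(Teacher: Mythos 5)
The paper does not actually prove this lemma; it is stated as an import, ``by the arguments of Manning in [Ma75]'', so your proposal can only be measured against Manning's original argument. Your general framework (fixed triangulation, stars as contractible charts, the observation that $f^{n-1}(B(x_j,n,\epsilon))\subset B(f^{n-1}(x_j),\epsilon)$, and the replacement of image arcs by bounded edge paths) is faithful to that argument. But there is a genuine gap at the step where you claim that each basis cycle $\gamma_i$ can be subdivided into ``at most a constant times $m$ subarcs'' each contained in a single dynamical ball, justified by a supposed universal degree bound on the nerve of the dynamical cover ``coming from the triangulation geometry''. This is not correct: the nerve of the cover by dynamical balls has nothing to do with the fixed triangulation, its multiplicity is not universally bounded, and, more to the point, a single dynamical ball $B(x_j,n,\epsilon)=\bigcap_{i=0}^{n}f^{-i}\bigl(B(f^i(x_j),\epsilon)\bigr)$ can meet the fixed loop $\gamma_i$ in arbitrarily many connected components as $n$ grows. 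Consequently the minimal number of subarcs needed so that each lies in one dynamical ball is not controlled by $m=g_{f,\epsilon}(n)$ alone, and your count of edges in the approximating cycle $c'$ collapses. Since bounding the size of a representative of $A^{n-1}[\gamma_i]$ by a quantity linear in $m$ (rather than in the number of subdivision arcs) is precisely the content of the lemma, this is the step that cannot be waved through.

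Manning's actual argument gets around this as follows: one subdivides $\gamma_i$ into $N$ arcs that are small in the $d_n$-metric, where $N$ depends on $n$ and on the modulus of continuity of $f,\dots,f^{n-1}$ and is in general much larger than $m$; one then attaches each subdivision point to a nearby point of the spanning set and replaces, up to adding small null-homologous loops, the image loop $f^{n-1}(\gamma_i)$ by a loop supported on a finite configuration built over the $m$ points $f^{n-1}(x_1),\dots,f^{n-1}(x_m)$ joined by short reference paths. The reduction from $N$ to $m$ happens at the level of this combinatorial bookkeeping on the spanning set (this is where the explicit constant $12$ arises), not at the level of the subdivision. If you want to complete your write-up you should replace the ``$O(m)$ subarcs'' claim by this two-stage argument: a fine subdivision whose cardinality you do not attempt to control, followed by a homological collapse onto a graph with $m$ vertices.
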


\begin{proof}[Proof of theorem \ref{Shub}]

By the previous lemma, we know that $[\left\|A^{n-1}\right\|] \leq o(f)$ and therefore, we must study $[\left\|A^{n-1}\right\|]$ when $sp(A)=1$. 

If $J$ is the Jordan normal form of $A$, then there exists an invertible matrix $Q$ such that $A= Q^{-1} J Q$. Since $A^n= Q^{-1} J^n Q$, we see that $[\left\|A^{n-1}\right\|] = [\left\|J^{n-1}\right\|]$. If $J_l$ are the Jordan blocks associated to $J$, then 
 $[\left\|J^{n-1}\right\|]= \sup\{[\left\|J^{n-1}_l\right\|]\}$. 

If $sp(A)=1$ and $J_l$ is associated to a real eigenvalue, then it must be either $1$ or $-1$. In any case, $J_l^n$ is a superior triangular matrix such that in the entrance $i,j$ has a number with order of growth $[n^{j - i}]$. Since the maximum value for $j-i$ is $dim(J_l) -1$, we infer that $[\left\|J^n_l\right\|]$= $[n^{dim(J_l) - 1}]$. When $J_l$ is associated to a complex eigenvalue the argument is analogous and with this we conclude the proof of theorem \ref{Shub}.

\end{proof} 
\appendix

\section{Topological entropy through open coverings.}

It is usually defined topological entropy using open coverings. Here we show how to define the generalized topological entropy this way. Let us quickly recall this approach. Given $\alpha$ an open covering of a compact space $M$, we define $H(\alpha)=log(N(\alpha))$ where 
\[N(\alpha)= \min\{\#\gamma: \gamma \subset \alpha\text{ is also covering of }M\}.\] 
If $f$ is a continuous map, then we consider 
\[\alpha^n=\{U_1\cap \dots \cap U_n: U_i\in f^{-i+1}(\alpha)\},\]
and $h(f,\alpha)=\lim_n \frac{1}{n} H(\alpha^n)$. Finally, we define 
\[h(f)=\sup\{h(f,\alpha):\alpha \text{ is an open covering}\}.\]

We translate this to our setting through the following definitions. Given $f:M\to M$ a continuous map and $\alpha$ an open covering of $M$, we define 
\[a_{f,\alpha}(n)= N(\alpha^n) = \min\{\#\gamma: \gamma \subset \alpha^n \text{ is also covering of }M\}, \] 
and then $\hat o(f)= \sup\{[a_{f,\alpha}(n)]:\alpha\text{ is an open covering of }M\}$. Before we prove $\hat o (f) = o(f)$, let us observe that
\[h(f)= \sup_\alpha  \lim_n \frac{1}{n} H(\alpha^n) = \sup_\alpha \lim_n \frac{1}{n} log (N(\alpha^n))  = \sup_\alpha \lim_n \frac{1}{n} log (a_{f,\alpha}(n)).\]
Because of the arguments of Theorem \ref{Rel}, we conclude that $\pi_\E(\hat o(f))=h(f)$.

This proves that $\hat o(f)$ is also a generalization of topological entropy. Yet, it does not prove that it coincides with our previous definition of generalized entropy. For this, we use standard arguments to show that all the definitions of topological entropy coincide. 

\begin{lema}\label{lemaAlpha}
Let $f:M\to M$ be a continuous map of a compact metric space. Given $\epsilon>0$ if $\alpha$ is an open covering of $M$ such that $diam(\alpha)<\epsilon$, then $s_{f,\epsilon}(n)\leq a_{f,\alpha}(n)$. 
\end{lema}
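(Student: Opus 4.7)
The plan is to pair a maximal $(n,\epsilon)$-separated set against a minimal subcover of $\alpha^n$, and show that each element of the subcover can accommodate at most one point of the separated set. This is the standard cross-comparison argument used to bridge the separated-set definition of entropy with the covering-number definition.

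First I would take $E\subset M$ an $(n,\epsilon)$-separated set of maximum cardinality, so $\#E=s_{f,\epsilon}(n)$, and take $\gamma\subset \alpha^n$ a subcover of $M$ of minimum cardinality, so $\#\gamma=a_{f,\alpha}(n)=N(\alpha^n)$. My target inequality $s_{f,\epsilon}(n)\leq a_{f,\alpha}(n)$ will follow once I show that the map sending each $x\in E$ to some element of $\gamma$ containing it is injective; equivalently, no single $V\in\gamma$ contains two distinct points of $E$.

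The key step is the injectivity claim. A generic element $V\in\alpha^n$ has the form $V=V_0\cap f^{-1}(V_1)\cap\cdots\cap f^{-(n-1)}(V_{n-1})$ with each $V_i\in\alpha$. Given $x,y\in V$, I get $f^i(x),f^i(y)\in V_i$ for every $0\leq i\leq n-1$. Since the covering $\alpha$ satisfies $\mathrm{diam}(\alpha)<\epsilon$, each $V_i$ has diameter less than $\epsilon$, so $d(f^i(x),f^i(y))<\epsilon$ for every such $i$. Thus $d_n(x,y)\leq\epsilon$, which means $y\in B(x,n,\epsilon)$. If both $x$ and $y$ lie in the separated set $E$, the definition forces $y=x$. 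Hence each $V\in\gamma$ meets $E$ in at most one point, and since $\gamma$ covers $M\supset E$, we conclude $\#E\leq\#\gamma$, which is exactly the claimed inequality.

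I do not anticipate a serious obstacle here; the only mild care needed is the indexing convention, since the paper defines $d_n$ using iterates $0\leq i\leq n$ whereas $\alpha^n$ naturally controls iterates $0\leq i\leq n-1$. This is a harmless off-by-one which is either absorbed into the $\epsilon$ or fixed by passing from $\alpha^n$ to $\alpha^{n+1}$ when needed, and it does not affect the resulting order of growth. Once the lemma is established, combining it with a symmetric inequality (bounding $a_{f,\alpha}(n)$ by $g_{f,\epsilon'}(n)$ for a suitable Lebesgue number $\epsilon'$ of $\alpha$) yields $\hat o(f)=o(f)$.
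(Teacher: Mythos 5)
Your argument is correct and is exactly the standard cross-comparison proof that the paper does not write out but delegates to Chapter 10, Section 1.2 of \cite{ViOl16}: pair a maximal $(n,\epsilon)$-separated set against a minimal subcover of $\alpha^n$ and show no element of $\alpha^n$ can contain two separated points. The off-by-one between $d_n$ (iterates $0\leq i\leq n$) and $\alpha^n$ (iterates $0\leq i\leq n-1$) is real under the paper's conventions, but as you note it is immaterial to the order-of-growth conclusion the lemma is used for.
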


\begin{lema}
Let $f:M\to M$ be a continuous map of a compact metric space. Given $\alpha$ an open covering of $M$, if $\epsilon$ is a Lebesgue number of $\alpha$  then $a_{f,\alpha}(n)\leq g_{f,\epsilon}(n)$. 
\end{lema}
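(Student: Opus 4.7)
The strategy is to take a minimum-size $(n,\epsilon)$-generator $E$ and, for each $x \in E$, manufacture a single element of $\alpha^n$ that covers the whole dynamical ball $B(x,n,\epsilon)$. This yields a sub-cover of $\alpha^n$ of cardinality at most $|E| = g_{f,\epsilon}(n)$, which is exactly the desired inequality.

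Concretely, fix $E \subset M$ with $|E| = g_{f,\epsilon}(n)$ and $M = \bigcup_{x \in E} B(x,n,\epsilon)$. For each $x \in E$ and each $0 \leq i \leq n-1$, the ball $B(f^i(x),\epsilon)$ has diameter at most $2\epsilon$; since $\epsilon$ is a Lebesgue number of $\alpha$, we may choose $U_i^x \in \alpha$ with $B(f^i(x),\epsilon) \subset U_i^x$ (if the convention in force requires diameter strictly less than $\epsilon$, run the same argument with $\epsilon/2$, which will not affect the order-of-growth comparison). Now set
\[
V_x \;=\; \bigcap_{i=0}^{n-1} f^{-i}(U_i^x) \;\in\; \alpha^n.
\]

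The key verification is the inclusion $B(x,n,\epsilon) \subset V_x$. If $y \in B(x,n,\epsilon)$, then $d(f^i(x), f^i(y)) \leq \epsilon$ for every $0 \leq i \leq n-1$, hence $f^i(y) \in B(f^i(x),\epsilon) \subset U_i^x$, i.e.\ $y \in f^{-i}(U_i^x)$. Intersecting over $i$ gives $y \in V_x$. Since the dynamical balls $B(x,n,\epsilon)$ cover $M$, so do the $V_x$, and $\{V_x : x \in E\}$ is a sub-cover of $\alpha^n$ with $|\{V_x\}| \leq |E| = g_{f,\epsilon}(n)$. By definition of $a_{f,\alpha}(n) = N(\alpha^n)$, this gives $a_{f,\alpha}(n) \leq g_{f,\epsilon}(n)$.

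There is no real obstacle here; the one subtle point is matching the Lebesgue-number convention with the (closed) definition of dynamical ball used in the paper, but this is handled by at worst shrinking $\epsilon$ by a universal factor, which leaves the order of growth unchanged. Combined with Lemma \ref{lemaAlpha}, one concludes $[s_{f,\epsilon}(n)] \leq [a_{f,\alpha}(n)] \leq [g_{f,\epsilon}(n)]$ for suitably related $\epsilon$ and $\alpha$, and taking suprema yields $\hat o(f) = o(f)$ as desired.
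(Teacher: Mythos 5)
Your proof is correct and is exactly the standard argument the paper defers to (it cites Chapter 10, Section 1.2 of Viana--Oliveira rather than proving the lemma itself): cover each dynamical ball $B(x,n,\epsilon)$ of a minimal generator by a single element $\bigcap_{i} f^{-i}(U_i^x)$ of $\alpha^n$, using the Lebesgue number to place each $B(f^i(x),\epsilon)$ inside some $U_i^x\in\alpha$. The only caveats are cosmetic: the radius-versus-diameter convention for the Lebesgue number, which you already address, and the paper's off-by-one mismatch between $d_n$ (indices $0\leq i\leq n$) and $\alpha^n$ (indices $0\leq i\leq n-1$), neither of which affects the order-of-growth conclusion.
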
 

A proof of both lemmas can be found in Chapter $10$ section $1.2$ of \cite{ViOl16}. The first Lemma implies that $o(f)\leq \hat o(f)$ and the second one implies that $ \hat o(f) \leq o(f)$. Therefore:

\begin{prop}
Let $f:M\to M$ be a continuous map of a compact metric space. Then, $ \hat o(f)= o(f)$.
\end{prop}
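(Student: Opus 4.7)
The plan is to derive the equality directly from the two lemmas stated just above, together with the characterization $o(f) = \sup\{[s_{f,\epsilon}(n)]:\epsilon>0\} = \sup\{[g_{f,\epsilon}(n)]:\epsilon>0\}$ coming from Proposition \ref{SepEquGen}. Since $\hat o(f)$ is defined as a supremum over open coverings and $o(f)$ as a supremum over scales $\epsilon$, the natural strategy is to show two inequalities, each produced by passing from one indexing to the other via the appropriate lemma.

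For the inequality $o(f)\leq \hat o(f)$, I would fix $\epsilon>0$ and use compactness of $M$ to produce a finite open covering $\alpha$ of $M$ by sets of diameter strictly less than $\epsilon$ (for instance, a finite subcovering of the covering by balls of radius $\epsilon/3$). The first lemma then gives $s_{f,\epsilon}(n)\leq a_{f,\alpha}(n)$ for every $n$, hence $[s_{f,\epsilon}(n)]\leq [a_{f,\alpha}(n)]\leq \hat o(f)$. Since this holds for every $\epsilon>0$, taking the supremum yields $o(f)=\sup_{\epsilon>0}[s_{f,\epsilon}(n)]\leq \hat o(f)$.

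For the reverse inequality $\hat o(f)\leq o(f)$, I would fix an open covering $\alpha$ of $M$, invoke the Lebesgue number lemma (which applies because $M$ is a compact metric space) to obtain a Lebesgue number $\epsilon>0$ for $\alpha$, and then apply the second lemma to conclude $a_{f,\alpha}(n)\leq g_{f,\epsilon}(n)$ for every $n$. This gives $[a_{f,\alpha}(n)]\leq [g_{f,\epsilon}(n)]\leq o(f)$, and taking the supremum over all open coverings $\alpha$ produces $\hat o(f)\leq o(f)$.

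Combining both inequalities gives $\hat o(f)=o(f)$. There is no real obstacle here: the argument is structurally the same as in the classical case where one proves the equivalence of the three standard definitions of topological entropy, and the only inputs needed are compactness (to obtain both a finite fine covering and a Lebesgue number) together with the two lemmas, whose proofs are referenced in \cite{ViOl16}. The main care to take is simply that the inequalities between the sequences pass correctly to inequalities between their orders of growth in $\OOR$, which is immediate from the definition of the partial order.
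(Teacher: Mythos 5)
Your argument is correct and is exactly the paper's proof: the first lemma applied to a finite covering of diameter less than $\epsilon$ gives $o(f)\leq \hat o(f)$, and the second lemma applied with a Lebesgue number gives $\hat o(f)\leq o(f)$. Nothing to add.
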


Once we know that $\hat o(f) = o(f)$, we apply Lemma \ref{lemaAlpha} again to obtain:

\begin{prop}
Let $f:M\to M$ be a continuous map of a compact metric space and $\alpha_k$ a sequence of finite open coverings such that $\lim_k diam(\alpha_k) = 0$. Then, $o(f) =``\lim_k" [a_{f,\alpha_k}(n)]= \sup\{[a_{f,\alpha_k}(n)]\}$.
\end{prop}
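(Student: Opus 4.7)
The plan is to reduce this statement to the previous proposition $\hat o(f) = o(f)$ together with Lemma \ref{lemaAlpha}. Since each $\alpha_k$ is in particular an open covering of $M$, it is clear that $\{[a_{f,\alpha_k}(n)] : k\in \N\} \subset \{[a_{f,\alpha}(n)] : \alpha \text{ open covering of }M\}$, and therefore
\[\sup_k [a_{f,\alpha_k}(n)] \leq \hat o(f) = o(f).\]
So the only substantial step is the reverse inequality.

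For that, I would use the characterization $o(f) = u(f) = \sup\{[s_{f,\epsilon}(n)] : \epsilon > 0\}$ coming from Proposition \ref{SepEquGen}. Fix $\epsilon > 0$. Since $\lim_k \diam(\alpha_k) = 0$, there exists $k_0$ such that $\diam(\alpha_{k_0}) < \epsilon$. By Lemma \ref{lemaAlpha}, this forces
\[s_{f,\epsilon}(n) \leq a_{f,\alpha_{k_0}}(n) \quad \forall n\in \N,\]
and passing to orders of growth gives $[s_{f,\epsilon}(n)] \leq [a_{f,\alpha_{k_0}}(n)] \leq \sup_k [a_{f,\alpha_k}(n)]$. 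Taking supremum over $\epsilon > 0$ yields $o(f) = u(f) \leq \sup_k [a_{f,\alpha_k}(n)]$, which combined with the first inequality gives the desired equality.

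There is essentially no obstacle here: the finite covering hypothesis guarantees each $a_{f,\alpha_k}(n)$ is finite, the diameter condition guarantees that eventually $\diam(\alpha_k)$ falls below any prescribed $\epsilon$, and the whole computation is a direct transcription of the classical argument that topological entropy can be computed by coverings of shrinking diameter. The only care needed is to interpret the ``$\lim_k$'' in quotes as the supremum (as the statement itself makes explicit), since the sequence $[a_{f,\alpha_k}(n)]$ need not be monotone in $k$ when the $\alpha_k$ are not nested.
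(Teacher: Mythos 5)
Your proof is correct and follows essentially the same route as the paper: the paper's own (very terse) argument is precisely to combine $\hat o(f)=o(f)$ with a second application of Lemma \ref{lemaAlpha}, which is exactly your two inequalities. The only thing you spell out more explicitly is the passage through $u(f)=\sup_\epsilon[s_{f,\epsilon}(n)]$ and the choice of $k_0$ with $\mathrm{diam}(\alpha_{k_0})<\epsilon$, which is a faithful expansion of what the paper leaves implicit.
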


\section{Classical properties of topological of entropy revisited.}

In this appendix, we will see and prove some properties verified by the generalized topological entropy. None of these ones are used for the main results of the article and therefore we leave them here for a curious reader. 

The topological entropy of a map $f$ is related to the topological entropy of $f^k$ when $k\geq 1$ by the formula $h(f^k)=k.h(f)$. Since in $\OOR$ there is no additive structure such property is lost. However, at least we have:

\begin{prop}
Let $M$ be a compact metric space and  $f:M\to M$ a continuous map. The following inequalities hold:
\[o(f) \leq  o(f^2)\leq  \dots \leq o(f^k)\leq \dots.\]
\end{prop}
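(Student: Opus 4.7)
The plan is to establish the slightly stronger statement that for any $k < l$ one has $o(f^k)\leq o(f^l)$, and then specialize to consecutive integers to obtain the chain of inequalities. Throughout I will work with separated sets, which by Proposition \ref{SepEquGen} compute $o(\cdot)$ just as well as generator sets.

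The key observation is that an iterated Bowen metric for $f^k$ samples the $f$-orbit only at multiples of $k$, while uniform continuity of the low iterates $f^0,\ldots,f^{l-1}$ lets us shift indices by less than $l$ at a controlled cost in distance. Concretely, fix $k<l$ and $\epsilon>0$, and use uniform continuity of $f^r$ for $0\leq r < l$ to pick $\delta>0$ such that $d(a,b)\leq \delta$ implies $d(f^r(a),f^r(b))\leq \epsilon$ for every $0\leq r<l$. I claim that every $(n,\epsilon)$-separated set for $f^k$ is $(n,\delta)$-separated for $f^l$. Indeed, if $x\neq y$ are in such a set, there is $0\leq i\leq n$ with $d(f^{ki}(x),f^{ki}(y))>\epsilon$. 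Write $ki=lj+r$ with $0\leq r<l$; since $k<l$ we have $j=\lfloor ki/l\rfloor\leq n$. By the contrapositive of the uniform continuity bound, $d(f^{lj}(x),f^{lj}(y))>\delta$, which is precisely the required separation of $x$ and $y$ under $n$ iterations of $f^l$.

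From this claim it follows that $s_{f^k,\epsilon}(n)\leq s_{f^l,\delta}(n)$ for every $n$, and hence $[s_{f^k,\epsilon}(n)]\leq [s_{f^l,\delta}(n)]\leq o(f^l)$ in $\OOR$. Taking the supremum over $\epsilon>0$ (using Proposition \ref{SepEquGen}) yields $o(f^k)\leq o(f^l)$. Applying this with $l=k+1$ for each $k\geq 1$ gives the desired chain $o(f)\leq o(f^2)\leq\cdots\leq o(f^k)\leq\cdots$.

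The only step that requires any care is the index-shift argument: one must check that the quotient $j=\lfloor ki/l\rfloor$ does not exceed $n$, which uses $k<l$, and that the $\delta$ obtained from uniform continuity is independent of $n$, which holds because only finitely many iterates $f^0,\ldots,f^{l-1}$ are involved. Everything else is bookkeeping in the partial order on $\OOR$.
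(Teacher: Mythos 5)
Your argument is correct, but it follows a genuinely different route from the paper's. The paper proves monotonicity in one line by asserting the identity $g_{f^k,\epsilon}(n)=g_{f,\epsilon}(nk)$ and then using that $n\mapsto g_{f,\epsilon}(n)$ is non-decreasing to compare consecutive powers; you instead compare arbitrary powers $k<l$ directly at the level of separated sets, absorbing the mismatch between the sampling times $ki$ and $lj$ into a change of scale from $\epsilon$ to $\delta$ via uniform continuity of the finitely many iterates $f^0,\dots,f^{l-1}$. The comparison is instructive: the inclusion of dynamical balls only gives $B_f(x,nk,\epsilon)\subset B_{f^k}(x,n,\epsilon)$ for free, i.e.\ $g_{f^k,\epsilon}(n)\leq g_{f,\epsilon}(nk)$, which is the opposite of the direction the paper's chain of inequalities actually consumes; recovering the reverse comparison requires precisely the finite-uniform-continuity adjustment you perform (and which is harmless for the order of growth, since $\delta$ depends only on $\epsilon$ and $l$, not on $n$, and the conclusion survives the supremum over $\epsilon$). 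So your proof is longer but self-contained, and it is the argument one would need to make the paper's identity rigorous. Your index bookkeeping is also sound: $j=\lfloor ki/l\rfloor\leq i\leq n$ because $k<l$ (in fact $k\leq l$ suffices), and writing $f^{ki}=f^{r}\circ f^{lj}$ with $0\leq r<l$ makes the contrapositive of the uniform continuity bound deliver exactly the $(n,\delta)$-separation for $f^l$ that you need.
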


\begin{proof}
To prove this, observe that $g_{f^k,\epsilon}(n) = g_{f,\epsilon}(n.k)$ and since $g_{f,\epsilon}$ is non-decreasing we infer that $g_{f^k,\epsilon}(n) \geq g_{f^{k-1},\epsilon}(n)$ for all $k\geq 2$ and for all $n\geq 1$. This implies that $[g_{f^k,\epsilon}(n)]\geq  [g_{f^{k-1},\epsilon}(n)]$ and therefore $o(f^k)\geq o(f^{k-1})$.
\end{proof}

When $f$ is a homeomorphism, we know that $h(f)=h(f^{-1})$ and this property is also true for $o(f)$.

\begin{prop}
Let $M$ be a compact metric space and $f:M\to M$ a homeomorphism, then $o(f)=o(f^{-1})$.
\end{prop}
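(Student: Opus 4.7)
The plan is to show that for every $\epsilon>0$ and every $n\geq 1$ one has the exact equality of natural numbers
\[ s_{f,\epsilon}(n) = s_{f^{-1},\epsilon}(n). \]
Once this is established, passing to equivalence classes gives $[s_{f,\epsilon}(n)] = [s_{f^{-1},\epsilon}(n)]$ in $\OR$ for every $\epsilon$, so taking the supremum over $\epsilon>0$ yields $u(f)=u(f^{-1})$, and Proposition \ref{SepEquGen} closes the argument.

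To prove the numerical equality I would use the standard ``time reversal'' trick. The key observation is that if $E\subset M$ is $(n,\epsilon)$-separated for $f$, then $f^{n-1}(E)$ is $(n,\epsilon)$-separated for $f^{-1}$. Indeed, given distinct $x,y\in E$ there exists $0\leq k\leq n-1$ with $d(f^k(x),f^k(y))>\epsilon$. Setting $j=n-1-k\in\{0,\dots,n-1\}$, one has
\[ d\bigl((f^{-1})^j(f^{n-1}(x)),\,(f^{-1})^j(f^{n-1}(y))\bigr)\;=\;d(f^k(x),f^k(y))\;>\;\epsilon, \]
so the points $f^{n-1}(x)$ and $f^{n-1}(y)$ are separated by $f^{-1}$ at time $j$. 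Because $f$ is a homeomorphism, $f^{n-1}$ is a bijection, so $\#\,f^{n-1}(E)=\#E$ and therefore $s_{f^{-1},\epsilon}(n)\geq s_{f,\epsilon}(n)$. Applying the same argument with the roles of $f$ and $f^{-1}$ exchanged (using that $(f^{-1})^{-1}=f$) gives the reverse inequality, hence the equality.

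I do not anticipate any genuine obstacle here: the Bowen-style argument for ordinary topological entropy already produces the separation counts exactly (not only up to a multiplicative constant), so no information is lost when we descend to orders of growth. The only mild care needed is to work with $(n,\epsilon)$-separated sets rather than generating sets, so that the injectivity of $f^{n-1}$ translates directly into equality of cardinalities; Proposition \ref{SepEquGen} guarantees that this is no restriction on $o(f)$.
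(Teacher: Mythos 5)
Your proof is correct and follows exactly the paper's argument: the paper also observes that $f^{n-1}(E)$ is an $(n,\epsilon)$-separated set for $f^{-1}$ whenever $E$ is one for $f$, deduces $s_{f,\epsilon}(n)=s_{f^{-1},\epsilon}(n)$, and concludes via the separated-set characterization of $o(f)$. You have merely written out the details (the index change $j=n-1-k$ and the bijectivity of $f^{n-1}$) that the paper leaves implicit.
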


\begin{proof}
Observe that if $E$ is an $(n,\epsilon)$-separated set for $f$ then $f^{n-1}(E)$ is an $(n,\epsilon)$-se\-pa\-ra\-ted set for $f^{-1}$. From this we deduce that $s_{f,\epsilon}(n)=s_{f^{-1},\epsilon}(n)$ and then  $o(f)=o(f^{-1})$.
\end{proof}

Another interesting property of entropy is the following: Given $K_1,\dots,K_l$ a finite number of compacts sets we know that $h(f,\cup_{i=1}^l K_i) = \max\{h(f,K_i):1\leq i \leq l\}$. This is translated as following:

\begin{prop}
Let $M$ be a metric space and $f:M\to M$ continuous. If $K_1,\dots,K_l$ are a finite number of compacts sets, then $o(f,\cup_{i=1}^l K_i) = \sup\{o(f,K_i):1\leq i \leq l\}$. 
\end{prop}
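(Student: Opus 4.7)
The plan is to prove the two inequalities separately, using two different descriptions of the generalized entropy (separated sets for one direction, generator sets for the other), together with the elementary fact that for finite collections in $\OR$ the supremum coincides with the class of the pointwise maximum.

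For the direction $\sup_i o(f,K_i)\leq o(f,\cup_i K_i)$, I would first observe a monotonicity statement: if $K\subset K'$ are compact, then every $(n,\epsilon)$-separated subset of $K$ is also an $(n,\epsilon)$-separated subset of $K'$, so $s_{f,K,\epsilon}(n)\leq s_{f,K',\epsilon}(n)$ for all $n$ and $\epsilon$, which gives $u(f,K)\leq u(f,K')$ and hence $o(f,K)\leq o(f,K')$ by Proposition \ref{SepEquGen}. Applying this with $K=K_i$ and $K'=\cup_j K_j$ for each $i$ yields the first inequality.

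For the reverse direction, I would switch to generator sets. Fix $\epsilon>0$ and $n\in\N$, and for each $i$ let $E_i\subset K_i$ be an $(n,\epsilon)$-generator of $K_i$ with $\#E_i=g_{f,K_i,\epsilon}(n)$. Then $\bigcup_i E_i$ is an $(n,\epsilon)$-generator of $\bigcup_i K_i$, so
\[
g_{f,\bigcup_i K_i,\epsilon}(n)\leq \sum_{i=1}^l g_{f,K_i,\epsilon}(n).
\]
The next step, which I regard as the crux, is the lattice-theoretic observation: for any finite collection $a_1,\dots,a_l\in\Or$, one has $\max_i a_i(n)\leq \sum_i a_i(n)\leq l\cdot \max_i a_i(n)$, so $[\sum_i a_i(n)]=[\max_i a_i(n)]$ in $\OR$, and this class is easily checked to be the supremum of $\{[a_i(n)]\}$ in $\OR$ (it dominates every $[a_i]$, and any common upper bound $[b]$ satisfies $a_i(n)\leq C_i b(n)$, hence $\max_i a_i(n)\leq (\max_i C_i)\,b(n)$). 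Applying this to $a_i(n)=g_{f,K_i,\epsilon}(n)$ gives
\[
[g_{f,\bigcup_i K_i,\epsilon}(n)]\leq \sup_{1\leq i\leq l}[g_{f,K_i,\epsilon}(n)]\leq \sup_{1\leq i\leq l} o(f,K_i).
\]

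Finally, taking the supremum over $\epsilon>0$ on the left, and using that this bound is uniform in $\epsilon$, one concludes $o(f,\bigcup_i K_i)\leq \sup_i o(f,K_i)$. Combined with the first direction this gives equality. The main obstacle is purely formal: it lies in verifying that finite suprema in the order-of-growth lattice $\OOR$ are computed by pointwise maxima (equivalently, by sums up to a bounded multiplicative constant), since this is what allows the passage from the pointwise inequality on generator counts to the desired inequality of orders of growth. Once this lemma is in hand, the rest of the argument is routine.
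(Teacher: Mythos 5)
Your argument is correct and follows essentially the same route as the paper: the paper likewise proves the easy inequality by monotonicity of separated sets under inclusion, and the hard one via the subadditivity bound $s_{f,K,\epsilon}(n)\leq s_{f,K_1,\epsilon}(n)+\dots+s_{f,K_l,\epsilon}(n)$, absorbing the finite sum into a common upper bound $[b(n)]$, which is exactly the content of your max-versus-sum lemma. The only cosmetic differences are that the paper works with separated sets rather than generators (immaterial by Proposition \ref{SepEquGen}) and quantifies over arbitrary upper bounds in $\OR$ instead of exhibiting the finite supremum concretely as the pointwise maximum.
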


\begin{proof}
Let us consider $K = \cup_{i=1}^l K_i$. Given a sequence $b(n)$ such that $o(f,K_i)\leq [b(n)]$ for all $i=1,\dots, l$, there exists $C_1,\dots, C_l$ positive constants such that
\[s_{f,K,\epsilon}(n) \leq s_{f,K_1,\epsilon}(n)+ \dots + s_{f,K_l,\epsilon}(n) \leq C_1 b(n) +\dots +C_l b(n) = (C_1+\dots +C_l) b(n).\]
From this we conclude that $o(f,K)\leq \sup\{o(f,K_i):1\leq i \leq l\}$. On the other hand, since $K_i\subset K$ we infer that $o(f,K_i)\leq o(f,K)$ for all $i$ and therefore $\sup\{o(f,K_i):1\leq i \leq l\}\leq o(f,K)$.
\end{proof}

We also know for expansive homeomorphisms that $h(f)=g(f,\epsilon)$ for some $\epsilon$ smaller that the expansivity constant. For the generalized topological entropy this result is also true.

\begin{prop}\label{PropExpHomeo}
Let $M$ be a compact metric space and $f:M\to M$ a homeomorphism. If $f$ is expansive, there exists $\epsilon$ such that $o(f) = [g_{f,\epsilon}(n)]= [s_{f,\epsilon}(n)]$. In particular, $o(f)\in \OR$.
\end{prop}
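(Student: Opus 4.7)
The plan is to establish $o(f) = [s_{f,\delta}(n)]$, where $\delta>0$ is the expansivity constant, by bounding every class $[s_{f,\epsilon'}(n)]$ above by $[s_{f,\delta}(n)]$. The essential input is the standard consequence of expansivity on a compact space: for every $\epsilon'>0$ there exists an integer $N=N(\epsilon')$ such that, whenever $d(f^i(x),f^i(y))\leq\delta$ for all $|i|\leq N$, one has $d(x,y)\leq\epsilon'$. This is proved by a short compactness-and-contradiction argument.

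The first step is to show $s_{f,\epsilon'}(n)\leq s_{f,\delta}(n+2N)$. Given an $(n,\epsilon')$-separated set $E$ and two distinct points $x,y\in E$, pick $k\in[0,n-1]$ with $d(f^k(x),f^k(y))>\epsilon'$; by the expansivity consequence there exists $j$ with $|j|\leq N$ and $d(f^{k+j}(x),f^{k+j}(y))>\delta$. Writing this separation time as $m=k+j+N\in[0,n+2N-1]$ and observing $f^m(f^{-N}(x))=f^{k+j}(x)$, one sees that $f^{-N}(E)$ is $(n+2N,\delta)$-separated, yielding the inequality.

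The second step, which I expect to be the main obstacle, is absorbing the shift: $s_{f,\delta}(n+2N)\leq K\cdot s_{f,\delta}(n)$ for a constant $K$ depending only on $N$ and $\delta$. By uniform continuity of $f,f^2,\dots,f^{2N-1}$ choose $\eta>0$ so that $d(u,v)<\eta$ forces $d(f^i(u),f^i(v))<\delta$ for $0\leq i<2N$, and cover $M$ by finitely many, say $K$, balls of radius $\eta/2$. Given an $(n+2N,\delta)$-separated set $E'$, partition $E'$ according to which of these balls contains $f^n(e)$; two points in the same piece lie within $\eta$ of each other under $f^n$ and hence within $\delta$ at all times $n,n+1,\dots,n+2N-1$, so they must be $\delta$-separated at some time in $[0,n-1]$. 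Each piece is therefore $(n,\delta)$-separated, giving $\#E'\leq K\cdot s_{f,\delta}(n)$.

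Chaining both inequalities gives $s_{f,\epsilon'}(n)\leq K\cdot s_{f,\delta}(n)$, hence $[s_{f,\epsilon'}(n)]\leq [s_{f,\delta}(n)]$ for every $\epsilon'>0$, whence $o(f)=\sup_{\epsilon'>0}[s_{f,\epsilon'}(n)]=[s_{f,\delta}(n)]\in\OR$. For the simultaneous generator statement, take $\epsilon=\delta/2$: Lemma \ref{LemSepEquGen} sandwiches $s_{f,\delta}(n)\leq g_{f,\delta/2}(n)\leq s_{f,\delta/2}(n)$, and since the outer terms both realise $o(f)$ (the right one by the inequality just established, the left one by definition), we get $[g_{f,\delta/2}(n)]=[s_{f,\delta/2}(n)]=o(f)$. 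The difficulty of step two is genuine: the naive bound $s_{f,\epsilon'}(n)\leq s_{f,\delta}(n+2N)$ does not by itself descend to the level of $\OR$, because $s_{f,\delta}$ could a priori grow quickly enough that $s_{f,\delta}(n+2N)/s_{f,\delta}(n)$ is unbounded, and it is precisely the grouping argument via uniform continuity that rules this out.
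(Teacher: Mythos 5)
Your argument is correct and shares the paper's overall architecture: a uniform-expansivity lemma yields $s_{f,\epsilon'}(n)\leq s_{f,\delta}(n+2N)$ by pulling an $(n,\epsilon')$-separated set back under $f^{-N}$ (the paper does exactly this via Lemma \ref{LemaM1}, producing the $(n+2k,\epsilon)$-separated set $f^{-k}(E)$), and the remaining work is to absorb the additive time shift $n\mapsto n+2N$ into a multiplicative constant. Where you diverge is in that second step: the paper quotes the submultiplicativity of generator counts (Lemma \ref{LemaM2}) and chains $s_{f,\epsilon'}(n)\leq s_{f,\epsilon}(n+2k)\leq g_{f,\epsilon/2}(n+2k)\leq g_{f,\epsilon/4}(2k)\cdot g_{f,\epsilon/4}(n)$, whereas you prove the needed bound $s_{f,\delta}(n+2N)\leq K\, s_{f,\delta}(n)$ directly for separated sets by pigeonholing on the position of $f^n(e)$ in a finite cover by small balls; your version is self-contained, stays entirely on the separated-set side until the final sandwich via Lemma \ref{LemSepEquGen}, and your closing remark correctly identifies the shift absorption as the genuine content (the first inequality alone does not descend to $\OR$). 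One small index slip: with the paper's convention $d_n(x,y)=\sup\{d(f^ix,f^iy):0\leq i\leq n\}$, an $(n+2N,\delta)$-separated pair whose $f^n$-images lie in the same small ball could still separate at time exactly $n+2N$, so you should choose $\eta$ by uniform continuity of $f^0,\dots,f^{2N}$ (i.e.\ for $0\leq i\leq 2N$ rather than $0\leq i<2N$); this is cosmetic and does not affect the proof.
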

To prove this we are going to revisit the arguments of \cite{Ma87}.
We will start by pointing out the following two lemmas whose proofs can be found in Chapter $IV$ section $7$ of \cite{Ma87}.

\begin{lema}\label{LemaM1}
Let $M$ be a compact metric space and $f:M\to M$ an expansive homeomorphism with  $\epsilon_0$  an expansivity constant of $f$. If $\delta<\epsilon<\epsilon_0$, then there exists $k>0$ and $n_0>2k$ such that if $x,y\in M$ verifies
\[d(f^i(x),f^i(y))<\epsilon\quad \forall 0\leq i\leq n,\]
for some $n\geq n_0$, then 
\[d(f^i(x),f^i(y))<\delta\quad \forall k\leq i\leq n-k,\]
\end{lema}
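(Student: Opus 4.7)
The plan is a standard argument by contradiction that turns a failure of the conclusion into a pair of distinct points whose two-sided orbits never separate by $\epsilon_0$, contradicting expansivity. Fix $\delta<\epsilon<\epsilon_0$ and negate the quantifiers: suppose that for every choice of $k$ and $n_0>2k$ there is a counterexample. Specializing to $n_0=2k+1$, for each $k\geq 1$ I obtain points $x_k,y_k\in M$, an integer $n_k\geq 2k+1$, and an index $j_k$ with $k\leq j_k\leq n_k-k$ such that
\[
d(f^i(x_k),f^i(y_k))<\epsilon\quad\forall\,0\leq i\leq n_k,\qquad d(f^{j_k}(x_k),f^{j_k}(y_k))\geq \delta.
\]

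The next step is to recenter the orbit window so that the bad separation happens at time zero. Setting $x'_k=f^{j_k}(x_k)$ and $y'_k=f^{j_k}(y_k)$, one has $d(x'_k,y'_k)\geq\delta$; and since $j_k+i\in[0,n_k]$ whenever $-k\leq i\leq k$, the estimate $d(f^i(x'_k),f^i(y'_k))<\epsilon$ holds on the two-sided window $|i|\leq k$.

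I would then pass to the limit using compactness of $M\times M$: extract a convergent subsequence $(x'_{k_j},y'_{k_j})\to (x,y)$. The lower bound is preserved in the limit, so $d(x,y)\geq\delta>0$. For each fixed $i\in\Z$, as soon as $k_j\geq|i|$ the upper bound applies to the $j$-th term, and continuity of $f^i$ yields $d(f^i(x),f^i(y))\leq\epsilon<\epsilon_0$ for every $i\in\Z$. This contradicts the choice of $\epsilon_0$ as an expansivity constant and completes the proof.

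The main obstacle is purely bookkeeping: one must arrange that the two-sided window $|i|\leq k$ over which the $\epsilon$-estimate is known to hold grows without bound along the extraction. This is exactly why the hypothesis pairs $n_0>2k$ with the constraint $k\leq j_k\leq n_k-k$, so that both the pre-buffer $j_k$ and the post-buffer $n_k-j_k$ diverge. With those indices in hand, the diagonal-style limit argument (fix $i$, wait until $k_j\geq|i|$, then apply continuity of $f^i$) closes the loop with no further input.
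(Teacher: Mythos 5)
Your proof is correct. The paper does not actually prove this lemma --- it defers to Chapter IV, Section 7 of Ma\~n\'e's book --- and the argument there is essentially the one you give: negate the statement, recenter the orbit segment at the bad time $j_k$ so the $\epsilon$-closeness holds on a two-sided window $|i|\leq k$, pass to a limit by compactness to obtain two distinct points whose full orbits stay $\epsilon$-close, and contradict expansivity.
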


\begin{lema}\label{LemaM2}
Let $M$ be a compact metric space and $f:M\to M$ a continuous map. If $n_1,\dots,n_j$ are positive integers and $\epsilon>0$, then 
\[g_{f,\epsilon}(n_1+\dots+n_j)\leq \prod_{i=1}^{j} g_{f,\epsilon/2}(n_i).\]
\end{lema}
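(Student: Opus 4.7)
The plan is to build an $(n_1+\cdots+n_j,\epsilon)$-generator by concatenating, block by block, minimal $(n_i,\epsilon/2)$-generators, and then bound the cost by the product of the block sizes. Write $N=n_1+\cdots+n_j$ and set the partial sums $s_0=0$ and $s_i=n_1+\cdots+n_i$. For each $i\in\{1,\dots,j\}$ fix an $(n_i,\epsilon/2)$-generator $E_i\subset M$ realizing the minimum, i.e.\ $\#E_i=g_{f,\epsilon/2}(n_i)$.

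The first step is to code orbits by tuples. For every $(x_1,\dots,x_j)\in E_1\times\cdots\times E_j$ define
\[
A(x_1,\dots,x_j)=\bigcap_{i=1}^{j}f^{-s_{i-1}}\bigl(B(x_i,n_i,\epsilon/2)\bigr),
\]
and, whenever this set is nonempty, select a single representative $z(x_1,\dots,x_j)\in A(x_1,\dots,x_j)$ (an arbitrary choice). Let $F$ be the set of all such representatives; by construction $\#F\leq\prod_{i=1}^{j}\#E_i=\prod_{i=1}^{j}g_{f,\epsilon/2}(n_i)$.

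The second step is to verify that $F$ is an $(N,\epsilon)$-generator. Given any $y\in M$, the point $f^{s_{i-1}}(y)$ lies in $M$, so by the generator property of $E_i$ there exists $x_i\in E_i$ with $d_{n_i}\!\bigl(f^{s_{i-1}}(y),x_i\bigr)\leq\epsilon/2$, i.e.\ $y\in f^{-s_{i-1}}(B(x_i,n_i,\epsilon/2))$. Doing this for every $i$ shows that $y\in A(x_1,\dots,x_j)$, so this intersection is nonempty and a representative $z=z(x_1,\dots,x_j)\in F$ was chosen. By definition of $A$, both $z$ and $y$ satisfy $d_{n_i}\!\bigl(f^{s_{i-1}}(\cdot),x_i\bigr)\leq\epsilon/2$, so the triangle inequality in each block gives $d(f^{s_{i-1}+k}(z),f^{s_{i-1}+k}(y))\leq\epsilon$ for every $0\leq k<n_i$. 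Since the blocks $[s_{i-1},s_i)$ partition $\{0,1,\dots,N-1\}$, this yields $d_N(z,y)\leq\epsilon$, proving $y\in B(z,N,\epsilon)$.

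Thus $F$ is an $(N,\epsilon)$-generator, and the cardinality bound proved in the first step immediately gives $g_{f,\epsilon}(N)\leq\#F\leq\prod_{i=1}^{j}g_{f,\epsilon/2}(n_i)$, as claimed. There is no serious obstacle in this argument; the only subtlety is the factor of two loss in the scale, which is exactly the price paid for using a single triangle inequality to convert the $\epsilon/2$-closeness of both $z$ and $y$ to the common coding point $x_i$ into $\epsilon$-closeness of $z$ and $y$ themselves on each block.
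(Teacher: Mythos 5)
Your proof is correct and is the standard concatenation argument: the paper does not prove this lemma itself but refers to Chapter IV, Section 7 of Ma\~n\'e's book \cite{Ma87}, and the block-coding construction you give (choosing one representative in each nonempty intersection $\bigcap_i f^{-s_{i-1}}(B(x_i,n_i,\epsilon/2))$ and using one triangle inequality per block) is essentially that proof. The only cosmetic point is that the paper's convention takes $d_n$ over $0\le i\le n$, so the blocks should be the closed intervals $[s_{i-1},s_i]$, whose union covers time $N$ as well; since each $E_i$ controls the whole closed block at scale $\epsilon/2$, your estimate applies verbatim and nothing in the argument changes.
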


\begin{proof}[Proof of \ref{PropExpHomeo}]
Let us take $\epsilon<\epsilon_0$ (the expansivity constant) and $\epsilon'<\epsilon/4$. We now apply Lemma \ref{LemaM1} to $f$ with $\delta =\epsilon'$ and we obtain $k$ and $n_0$. If $n\geq n_0-2k$, consider $E$ an $(n,\epsilon')$-separated set with $\#E=s_{f,\epsilon'}(n)$. By Lemma \ref{LemaM1}, we know that $f^{-k}(E)$ is an $(n+2k,\epsilon)$-separated set. This implies  $s_{f,\epsilon'}(n)\leq s_{f,\epsilon}(n+2k)$ and by Lemma \ref{LemSepEquGen} and Lemma \ref{LemaM1} we deduce that
\[g_{f,\epsilon'}(n)\leq s_{f,\epsilon'}(n)\leq s_{f,\epsilon}(n+2k)\leq g_{f,\epsilon/2}(n+2k) \leq g_{f,\epsilon/4}(2k) .g_{f,\epsilon/4}(n) . \]
In particular, $[g_{f,\epsilon'}(n)]\leq [g_{f,\epsilon/4}(n)]$ and by taking the supremum over $\epsilon'$ we infer that $o(f)\leq [g_{f,\epsilon/4}(n)]$. Since clearly $[g_{f,\epsilon/4}(n)]\leq o(f)$, we conclude that $o(f)= [g_{f,\epsilon/4}(n)]$.
\end{proof}

\end{document}